\swapnumbers \theoremstyle{plain}
\newtheorem{thm}{Theorem}[section]
\newtheorem{lem}[thm]{Lemma}
\newtheorem{lem-defn}[thm]{Lemma and definition}
\newtheorem{cor}[thm]{Corollary}
\newtheorem{prop}[thm]{Proposition}
\theoremstyle{definition}\newtheorem{rem}[thm]{Remark}
\newtheorem{defn}[thm]{Definition}
\theoremstyle{definition}
\newcommand{\Cal}{\mathcal}
\newcommand{\R}{\mathbb{R}}
\newcommand{\C}{\mathbb{C}}
\newcommand{\D}{\mathbb{D}}
\newcommand{\N}{\mathbb{N}}
\DeclareMathOperator{\supp}{supp} \DeclareMathOperator{\im}{Im}
\DeclareMathOperator{\ke}{Ker} \DeclareMathOperator{\rea}{Re}
\numberwithin{equation}{section}
\title {Similarity of holomorphic matrices on 1-dimensional Stein spaces}
\author{J\"urgen Leiterer}
\address{Institut f\"ur Mathematik \\
Humboldt-Universit\"at zu Berlin \\Rudower Chaussee 25\\D-12489 Berlin , Germany}
\email{leiterer@mathematik.hu-berlin.de}
\thanks{MSC 2010: 32E10, 47A56, 15A21.}
\thanks{Keywords: holomorphic matrices, similarity, Oka principle, 1-dimensional Stein spaces}
\thanks{This is a revised version of a part of a preprint, sent to some colleagues in September 2016 and then posted in the arXiv \cite{Le0}.}
\date{}
\dedicatory{Dedicated to the memory of Selim Krein}
\begin{document}

\begin{abstract}  R. Guralnick [Linear Algebra Appl. 99, 85-96 (1988)] proved that two holomorphic matrices on a noncompact connected Riemann surface, which are locally holomorphically similar, are globally holomorphically similar. In the preprints [arXiv:1703.09524] and [arXiv:1703.09530],  a generalization of this to arbitrary (possibly, nonsmooth) 1-dimensional Stein spaces was obtained. The  present  paper contains a revised version of the proof from [arXiv:1703.09524].
The method of this revised proof can be used also in the higher dimensional case, which will be the subject of a forthcoming paper.
\end{abstract}

\maketitle

\section{Introduction}

Let $X$ be a (reduced) complex space, e.g., a complex manifold or an analytic subset of a complex manifold. Let  $\mathrm{Mat}(n\times n,\C)$ be the algebra of complex $n\times n$ matrices, and  $\mathrm{GL}(n,\C)$ the group of  invertible complex $n\times n$ matrices.

Two holomorphic maps $A,B:X\to \mathrm{Mat}(n\times n,\C)$ are called
(globally) {\bf  holomorphically  similar on $ X$} if there is a holomorphic  map $H:X\to\mathrm{GL}(n,\C)$ with $B=H^{-1}AH$ on $X$.
They are called
{\bf locally holomorphically similar at $\xi\in X$}  if there is a neighborhood $U$ of $\xi$ such that $A\vert_U$ and $B\vert_U$ are holomorphically similar on $U$.
Correspondingly,   {\bf continuous} and {\bf $\Cal C^k$ similarity} are defined.

\vspace{2mm}
R. Guralnick \cite{Gu} proved the following theorem.

\begin{thm}\label{24.10.16+} Suppose  $X$ is a noncompact connected Riemann surface. Then any two holomorphic maps $A,B:X\to  \mathrm{Mat}(n\times n,\C)$, which are locally holomorphically similar at each point of $X$,  are globally holomorphically similar on $X$.
\end{thm}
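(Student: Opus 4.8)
The plan is to reformulate the assertion as the triviality of a single cohomology class over a suitable (non-abelian) sheaf of groups, and then to prove a vanishing theorem of Oka type.

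Fix the pair $A,B$ and introduce three sheaves on $X$. Let $\mathcal{C}_B$ be the sheaf of holomorphic maps $T$ into $\mathrm{Mat}(n\times n,\C)$ with $TB=BT$; it is the kernel of the holomorphic bundle endomorphism $T\mapsto TB-BT$ of the trivial bundle with fibre $\mathrm{Mat}(n\times n,\C)$, hence a coherent sheaf of $\mathcal O_X$-algebras. Let $\mathcal{G}_B\subseteq\mathcal{C}_B$ be the multiplicative sheaf of groups of its invertible sections, i.e.\ of holomorphic maps into $\mathrm{GL}(n,\C)$ commuting with $B$; its ``fibre'' over $\xi$ is the centralizer $Z_{B(\xi)}$ of $B(\xi)$ in $\mathrm{GL}(n,\C)$. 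Finally let $\mathcal{S}$ be the sheaf of sets whose sections over an open $U\subseteq X$ are the holomorphic $H\colon U\to\mathrm{GL}(n,\C)$ with $AH=HB$. The hypothesis of local holomorphic similarity says precisely that $\mathcal{S}$ has a section near every point; since $\mathcal S$ carries a simply transitive right action of $\mathcal G_B$ (namely $H\cdot T$), it is a holomorphic $\mathcal G_B$-torsor, and the theorem is equivalent to $[\mathcal S]=0$ in $H^1(X,\mathcal G_B)$. I would in fact prove the stronger statement that \emph{every holomorphic $\mathcal G_B$-torsor on $X$ is trivial}.

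The topological counterpart is easy and instructive. Each fibre $Z_{B(\xi)}$ is the unit group of the finite-dimensional algebra $\mathcal C_B(\xi)$, hence the complement of a hypersurface in an affine space, and in particular \emph{connected}. A noncompact connected Riemann surface is homotopy equivalent to a $1$-dimensional CW complex; our torsor $\mathcal S$, and likewise any continuous $\mathcal G_B$-torsor, is locally trivial (for $\mathcal S$, because of the local similarity hypothesis); and the only obstruction to extending a continuous section of such a torsor over the cells of a $1$-complex lies in $\pi_0$ of the fibres, which is trivial. Hence every continuous $\mathcal G_B$-torsor on $X$ is trivial. (In the abelian model case $\mathcal C_B=\mathcal O_X$ this is just $H^2(X;\Z)=0$.)

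It therefore remains to prove an \textbf{Oka principle for $\mathcal G_B$}, namely that the forgetful map $H^1(X,\mathcal G_B)\to H^1(X,\mathcal G_B^{\mathrm{cont}})$ is injective; together with the previous paragraph this finishes the proof. Here is where the real work lies, because $\mathcal G_B$ is not the sheaf of sections of a locally trivial bundle of Lie groups: the Jordan type of $B(\xi)$, and with it the isomorphism type of $Z_{B(\xi)}$, jumps along a discrete set $\Sigma\subset X$. Off $\Sigma$, $\mathcal G_B$ \emph{is} such a locally trivial bundle with connected structure group (one may locally conjugate $B$ to Jordan form with holomorphic eigenvalues), so on the open Riemann surface $X_0:=X\setminus\Sigma$ the Oka--Grauert principle applies and, combined with the topological triviality above, gives $H^1(X_0,\mathcal G_B)=0$. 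The crux is to bridge across $\Sigma$: given trivializations of a torsor on $X_0$ and on a small disc $D_*$ about a point $\xi_*\in\Sigma$, their difference is a holomorphic section of $\mathcal G_B$ over $D_*\setminus\{\xi_*\}$, and one must write it as a product of a factor extending holomorphically over $\xi_*$ and a factor extending over $X_0$ --- a non-abelian Cousin problem. This is where one-dimensionality enters decisively: $\Sigma$ is $0$-dimensional, so only isolated local problems occur; on the Stein curve $X$ the higher cohomology of every coherent sheaf vanishes (Cartan B) and every divisor is principal, which handles the ``unipotent'' part of the splitting; the ``reductive'' part is again controlled by the connectedness of centralizers; and at $\xi_*$ itself the larger centralizer $Z_{B(\xi_*)}$ only gives extra room. (That $A$ and $B$ have the same invariant factors over the field $\mathcal M(X)$ of meromorphic functions --- the determinantal divisors of $zI-A$ and $zI-B$ agree locally, hence globally --- provides, equivalently, a global meromorphic $H$ with $AH=HB$ and $\det H\not\equiv 0$, which is the concrete input of the Cousin problem.) I expect this bridging across $\Sigma$ --- making the passage from the additive to the multiplicative problem uniform at the jump points --- to be the main obstacle, and it is precisely the point at which the higher-dimensional case, where $\Sigma$ becomes positive-dimensional, will require genuinely new arguments.
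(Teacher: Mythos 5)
Your reduction is sound and, in outline, is the same as the paper's: the assertion is equivalent to the vanishing of the (non-abelian) first cohomology of the sheaf of invertible holomorphic sections of the commutant, and the two facts you isolate --- connectedness of the centralizers and discreteness of the Jordan-jump locus $\Sigma$ --- are exactly the facts the paper exploits (Proposition \ref{11.6.16}, Theorem \ref{26.1.16'}, Corollary \ref{9.8.16+}). The genuine gap is the step you yourself call ``the main obstacle'': factoring a holomorphic section of $\mathcal G_B$ over a punctured disc $D_*\setminus\{\xi_*\}$ as a product of a factor extending holomorphically (and commuting with $B$) across $\xi_*$ and a factor extending over $X\setminus\Sigma$. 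This is not a technical remainder to be recorded as an expectation; it is the entire content of the theorem, and none of the three hints offered for it is an argument. ``Every divisor is principal'' addresses an abelian Cousin problem, not the multiplicative splitting of a matrix-valued map over an annular overlap (recall that non-abelian factorization over annuli can genuinely fail --- Birkhoff --- so any proof must use the noncompactness of $X$ globally, not just local data). ``The larger centralizer at $\xi_*$ gives extra room'' is in fact the source of the difficulty rather than a remedy: a holomorphic map on $D_*\setminus\{\xi_*\}$ with values in the generic centralizer need not extend across $\xi_*$, and there is no a priori reason the two factors of a $\mathrm{GL}(n,\C)$-valued factorization can be chosen to commute with $B$ where they need to. The observation that $A$ and $B$ are similar over the field of meromorphic functions is correct but is precisely the starting point of Guralnick's Bezout-ring argument, i.e.\ it restates the problem of passing from a meromorphic to a holomorphic similarity.

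It is worth noting that the paper does not solve this non-abelian Cousin problem head-on; it circumvents it. All gluing across the bad set $Z$ is avoided by passing to $Z$-adapted refinements (Lemma \ref{27.9.17--}) and to the subsheaf $\widehat{\mathcal O}^{\mathrm{GCom\,}A}(\cdot,Z)$ of sections identically $I$ near $Z$, so that every cocycle lives entirely over the locus where the commutant is locally rigid; the return from the intermediate sheaf $\widehat{\mathcal O}^{\mathrm{GCom\,}A}$ to $\mathcal O^{\mathrm{GCom\,}A}$ is handled by the Forster--Ramspott Oka principle (Proposition \ref{4.10.17'}), not by Grauert's theorem for a fixed group; and the global splitting is assembled by a Morse-theoretic bump exhaustion (Theorem \ref{5.8.16}, Lemma \ref{27.8.17'}) whose analytic engine on each overlap is the connectedness of $\widehat{\mathcal O}^{\mathrm{GCom\,}A}(W)$ for contractible $W$ disjoint from $Z$, plus a Cartan-type product decomposition of sections close to $I$. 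If you wish to complete your route instead, you must actually prove the splitting over $D_*\setminus\{\xi_*\}$; as written, the proposal stops exactly where the proof has to begin.
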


In \cite{Le0}, the following generalization was obtained.
\begin{thm}\label{18.10.16} The claim of Theorem 1.1 remains true if  $X$ is an arbitrary  1-dimensional Stein space (for example, a 1-dimensional closed analytic subset of some $\C^N$, or, more general, of a domain of holomorphy in $\C^N$).
\end{thm}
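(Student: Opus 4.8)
The plan is to recast global holomorphic similarity as the triviality of a torsor and then to solve the resulting multiplicative Cousin problem on $X$. Let $\mathcal{C}_A$ denote the sheaf on $X$ whose sections over an open $U$ are the holomorphic maps $U\to\mathrm{Mat}(n\times n,\C)$ commuting with $A$; being the kernel of the $\mathcal{O}_X$-linear endomorphism $M\mapsto AM-MA$ of $\mathcal{O}_X^{n^2}$, it is coherent. Let $\mathcal{C}_A^*\subseteq\mathrm{GL}(n,\mathcal{O}_X)$ be its sheaf of invertible sections, a sheaf of (in general non-abelian) groups. The sheaf $\mathcal{P}$ of local holomorphic conjugators of $A$ into $B$ — sections over $U$ being the holomorphic $H:U\to\mathrm{GL}(n,\C)$ with $B=H^{-1}AH$ on $U$ — carries a free transitive left action of $\mathcal{C}_A^*$ wherever it is nonempty; by hypothesis it is nonempty near every point, so $\mathcal{P}$ is a $\mathcal{C}_A^*$-torsor, and $A,B$ are globally holomorphically similar on $X$ if and only if the class of $\mathcal{P}$ in $H^1(X,\mathcal{C}_A^*)$ is trivial. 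Thus the theorem is equivalent to the Oka-type statement that on a $1$-dimensional Stein space every $\mathcal{C}_A^*$-torsor is trivial.

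Next I would choose the cover used to trivialize $\mathcal{P}$. Let $S\subseteq X$ be the (closed, discrete) singular locus; then $X\setminus S$ is a smooth $1$-dimensional Stein space with no compact components, i.e.\ a disjoint union of noncompact connected Riemann surfaces, so Theorem~\ref{24.10.16+} produces a holomorphic conjugator $H_0$ on $X\setminus S$. For each $\xi\in S$ choose, using local similarity, a holomorphic conjugator $H_\xi$ on a neighborhood $U_\xi$ of $\xi$; shrinking, assume the $U_\xi$ pairwise disjoint with $U_\xi\cap S=\{\xi\}$. On $U_\xi\setminus\{\xi\}$ the matrix $C_\xi:=H_0H_\xi^{-1}$ is holomorphic, invertible, and commutes with $A$, i.e.\ $C_\xi\in\mathcal{C}_A^*(U_\xi\setminus\{\xi\})$; with respect to the cover $\{X\setminus S\}\cup\{U_\xi\}_{\xi\in S}$ the family $(C_\xi)$ is, automatically since there are no triple overlaps, a $1$-cocycle representing $\mathcal{P}$. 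So it remains to write $C_\xi=E_0E_\xi^{-1}$ on $U_\xi\setminus\{\xi\}$ with $E_0\in\mathcal{C}_A^*(X\setminus S)$ and $E_\xi\in\mathcal{C}_A^*(U_\xi)$: then $E_0^{-1}H_0$ and $E_\xi^{-1}H_\xi$ agree on overlaps and glue to a global holomorphic conjugator on $X$.

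To produce this multiplicative splitting I would first exploit coherence: since $X$ is Stein and $\mathcal{C}_A$ is coherent, $H^1(X,\mathcal{C}_A)=0$, so the additive Cousin/Mittag–Leffler problem for $\mathcal{C}_A$ with the polar data supplied by the $C_\xi$ at the points of $S$ is solvable. The genuine difficulty is the passage from the additive to the multiplicative statement, i.e.\ keeping the constructed sections inside the unit group $\mathcal{C}_A^*$ rather than merely in $\mathcal{C}_A$. Away from $S$ this is handled by the Oka–Grauert principle: a $1$-dimensional Stein space is homotopy equivalent to a $1$-complex, hence every holomorphic $\mathrm{GL}(n,\C)$-valued cocycle is trivial, and one normalizes using Theorem~\ref{24.10.16+} so that the remaining ambiguity commutes with $A$. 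At the finitely many branch points, where $\dim\mathcal{C}_A$ jumps and $\mathcal{C}_A$ fails to be locally free, I expect the main obstacle: near each $\xi$ one must correct the candidate section by an element of $\mathcal{C}_A(U_\xi)$ without destroying invertibility on the punctured neighborhood, and then globalize the correction, which calls for a general-position argument that is available precisely because $\dim X=1$ — the non-invertibility locus of a holomorphic section of $\mathcal{C}_A$ can be made empty by an arbitrarily small holomorphic perturbation, using that, by Cartan's Theorem~A, global sections generate every stalk. Carrying out this last step carefully, together with a routine exhaustion of $X$ by relatively compact Stein subsets to make the patching locally finite, completes the proof.
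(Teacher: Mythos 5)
Your reduction is sound and matches the paper's: the observation that global similarity is equivalent to the splitting, in the sheaf of invertible holomorphic commutants of $A$, of the cocycle built from local conjugators is exactly the content of Theorem \ref{4.10.17'neu}, and your choice of cover (Guralnick on the smooth locus, small disjoint neighborhoods of the singular points) is the strategy of the companion paper \cite{Le2} rather than of this one, which deliberately avoids invoking Guralnick and instead runs a bump exhaustion over the normalization. So far, so good.

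The gap is in the only step that carries the real weight: splitting $C_\xi=E_0E_\xi^{-1}$ inside $\mathcal{C}_A^*=\mathcal O^{\mathrm{GCom\,}A}$. Coherence of $\mathrm{Com\,}A$ and Theorem B give you the vanishing of the \emph{additive} $H^1$, but $\mathcal{C}_A^*$ is a non-abelian sheaf of groups whose fibers $\mathrm{GCom\,}A(\zeta)$ vary and which, as the paper stresses in Section \ref{4.10.17}, need not even be locally trivial as a family of topological spaces; so neither the additive result nor Grauert's Oka principle for $\mathcal O^{\mathrm{GL}(n,\C)}$ applies to it. Your proposed fix --- perturbing a section of $\mathcal{C}_A(U_\xi)$ into the unit group by a general-position argument via Cartan's Theorem A --- produces, at best, \emph{some} invertible commuting section; it does not produce one that splits the \emph{given} cocycle $C_\xi$ on the punctured neighborhood, where there is a genuine obstruction coming from the topology of $U_\xi\setminus\{\xi\}$ (a union of punctured disks, one per local branch) together with the way $\mathrm{GCom\,}A$ varies across the non-Jordan-stable points. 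This is precisely what the paper's remaining machinery is for: the Forster--Ramspott Oka principle for the admissible pair $(\mathcal O^{\mathrm{GCom\,}A},\widehat{\mathcal O}^{\mathrm{GCom\,}A})$ reduces the problem to the intermediate sheaf $\widehat{\mathcal O}^{\mathrm{GCom\,}A}$, and then Jordan stability (Theorem \ref{26.1.16'}), the connectedness of $\widehat{\mathcal O}^{\mathrm{GCom\,}A}(W)$ on contractible sets of Jordan stable points (Corollary \ref{9.8.16+}), and the bump/exhaustion argument of Sections \ref{23.7.16+}--\ref{26.7.17} establish $H^1(X,\widehat{\mathcal O}^{\mathrm{GCom\,}A})=0$. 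Without some substitute for this Oka principle and for the analysis of $\mathrm{GCom\,}A$ near the distinguished points, your argument does not close.
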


Guralnick's proof of Theorem \ref{24.10.16+} consists in proving a theorem for matrices with elements in a Bezout ring (with some extra properties) and then  applying this to the ring of holomorphic functions on a noncompact connected Riemann surface. The ring of holomorphic functions on an arbitrary (possibly nonsmooth) 1-dimensional Stein space need not be  Bezout. Therefore, it seems  that this proof cannot be used to  prove Theorem \ref{18.10.16}, at least not in a straightforward way.

The proof of Theorem \ref{18.10.16} given in \cite{Le0} proceeds as follows.
First we use the  Oka principle for Oka-pairs of Forster and Ramspott \cite{FR1} to show that Theorem \ref{18.10.16} is equivalent to a certain topological statement (see Theorem \ref{4.10.17'neu} below). Then we prove this topological statement. This proof is independent of Guralnick's result. Using Guralnick's result, this proof can be shortened, passing to the normalization of $X$ (which is smooth). This is shown in \cite{Le2}.

The aim of the present paper is to give a revised version of the proof  from \cite{Le0}.
The method of this revised version is useful also if $X$ higher dimensional. For example, in a forthcoming paper, we will show that the claim of Theorem \ref{24.10.16+} remains valid if $X$ is a convex domain in $\C^2$ (for convex domains in $\C^3$, this is not true). With this in view,  already in the present paper, sometimes $X$ is allowed to be of arbitrary dimension.

{\bf Note added in proof.} Another proof of Theorem \ref{18.10.16} is outlined by F. Forstneri\v{c} in  the second edition of his  book \cite[Theorem 6.14.9]{Fc}. He does not use the above mentioned Oka principle of Forster and Ramspott, but a new Oka principle  originally established by him in \cite{Fc1}.

\section{Notations and our use of the language of sheaves}\label{20.6.16}

$\N$ is the set of natural numbers including $0$. $\N^*=\N\setminus\{0\}$.

If $n,m\in \N^*$, then $\mathrm{Mat}(n\times m,\C)$ is the space of complex $n\times m$ matrices ($n$ rows, $m$ columns), and
$\mathrm{GL}(n,\C)$ is the group of invertible complex $n\times n$ matrices.

The unit matrix in $\mathrm{Mat}(n\times n,\C)$ will be denoted by $I_n$ or simply by $I$.

If a matrix  $\Phi\in \mathrm{Mat}(n\times m,\C)$ is interpreted as a linear map from $\C^m$ to  $\C^n$, then $\ke \Phi$ denotes the kernel,  $\im \Phi$ the image and $\Vert\Phi\Vert$ the operator norm  (induced by the Euclidean norm) of  $\Phi$.

By a complex space we always mean a {\em reduced} complex space in the sense of \cite{GR}, which is the same as an {\em analytic space} in the terminology used in \cite{C} and \cite{L}.

From now on, in this section,  $X$ is  topological  space,  and  $G$ is a topological group (possibly non-abelian).

By $\Cal C^G$, or more precisely by $\Cal C^G_X$, we denote the sheaf of continuous $G$-valued maps on $X$, that is, the map which assigns to each nonempty open $U\subseteq X$ the group
 $ \Cal C^{G}(U)$  of all continuous maps $f:U\to {G}$. Also $\mathcal C^G(\emptyset):=\{1\}$ ($1$ being the neutral element of $G$).

 A subsheaf  of $\Cal C_X^G$ is a map $\Cal F$ which assigns to each open $U\subseteq X$ a subgroup $\Cal F(U)$ of $\Cal C^G(U)$ such that:
\begin{itemize}
\item[--] If $V\subseteq U$ are nonempty open subsets of $X$, then, for each $f\in \Cal F(U)$, the restriction of $f$ to $V$, $f\vert_V$, belongs to $\Cal F(V)$.
\item[--] If $U\subseteq X$ is  open  and $f\in \Cal C^G(U)$ is  such that, for each $\xi\in U$, there is an
open neighborhood $V\subseteq U$ of $\xi$ with $f\vert_V\in \Cal F(V)$, then  $f\in \Cal F(U)$.
\end{itemize}The elements of $\Cal F(U)$ are called {\bf sections} of $\mathcal F$ over $U$.

 If $\Cal F$ and $\Cal G$ are two subsheaves of $\Cal C^{G}_X$, then $\Cal F$ is called a subsheaf of $\Cal G$ if $\Cal F(U)\subseteq \Cal G(U)$ for all open $U\subseteq X$.

  If $X$ is  a complex space and  $G$ a complex Lie group, then we denote by $\Cal O^{G}_X$, or simply by $\Cal O^{G}$,  the  subsheaf of $\Cal C^G_X$ which assigns to each nonempty open $U\subseteq X$, the group $\Cal O^G(U)$ of all holomorphic maps from $U$ to $G$.

Let $\Cal F$ be a subsheaf of $\Cal C^G_X$.

Let
 $\Cal U=\{U_i\}_{i\in I}$  an open covering of $X$.

A family $f_{ij}\in\Cal F(U_i\cap U_j)$, $i,j\in I$, is called a {\bf $(\Cal U,\Cal F)$-cocycle}, or simply a {\bf cocycle}, if (the group operation of $G$ being a multiplication)
\begin{equation*}
f_{ij}f_{jk}=f_{ik}\quad\text{on}\quad U_i\cap U_j\cap U_k\quad\text{for all}\quad i,j,k\in I.\quad\footnote{We use the  convention that  statements like
``$f=g$ on $\emptyset$'' or ``$f:=g$ on $\emptyset$''
have to be omitted.}
\end{equation*}Note that then always $f^{-1}_{ij}=f^{}_{ji}$ and $f^{}_{ii}\equiv 1$.
The set of all $(\Cal U,\Cal F)$-cocycles will be denoted by $Z^1(\Cal U,\Cal F)$.

Let  $f=\{f_{ij}\}\in Z^1(\Cal U,\Cal F)$. We say that $f$ {\bf splits}  (or {\bf is trivial}) if there exists a family $h_i\in \Cal F(U_i)$, $i\in I$, such that
\[
f^{}_{ij}=h^{}_i h^{-1}_j\quad\text{on}\quad U_i\cap U_j\quad\text{for all}\quad i,j\in I.
\]

We say that $f$ is an {\bf $\Cal F$-cocycle} on $X$, if there exists an open covering $\Cal U$ of $X$ with $f\in Z^1(\Cal U,\Cal F)$. Then $\Cal U$ is called the {\bf covering of} $f$.
As usual, we write $H^1(X,\Cal F)=0$
to say that each $\Cal F$-cocycle on $X$ splits, and $H^1(X,\Cal F)\not=0$ to say that there exist non-splitting $\Cal F$-cocycles on $X$.

Now let $\Cal U^*=\{U^*_\alpha\}_{\alpha\in I^*}$ be a second open covering of $X$, which is a refinement of $\Cal U$, i.e., there is a map
$\tau:I^*\to I$ with $U^*_\alpha\subseteq U_{\tau(\alpha)}$ for all $\alpha\in I^*$.
Then we say  that
a $(\Cal U^*,\Cal F)$-cocycle $\{f^*_{\alpha\beta}\}^{}_{\alpha,\beta\in I^*}$  is {\bf induced} by a $(\Cal U,\Cal F)$-cocycle $\{f_{ij}\}_{i,j\in I}$
if this map $\tau$ can be chosen so that
\[
f^*_{\alpha\beta}=f_{\tau(\alpha)\tau(\beta)}^{}\quad\text{on}\quad U^*_i\cap U^*_j\quad\text{for all}\quad \alpha,\beta\in I^*.
\]
We need the following  well-known  and simple proposition, see \cite[p. 41]{Hi}  for ``only if'' and  \cite[p. 101]{C} for ``if''.
\begin{prop}\label{17.12.15--} Let $f\in Z^1(\Cal U,\Cal F)$ and $f^*\in Z^1(\Cal U^*,\Cal F)$ such that $f^*$ is induced by $f$.
Then $f$ splits if and only if $f^*$ splits.
\end{prop}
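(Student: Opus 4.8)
The plan is to prove both implications by explicit formulas: the forward direction by mere restriction, and the backward direction by a gluing argument using the second (locality) axiom in the definition of a subsheaf. Throughout one keeps the footnote convention on empty intersections in mind, so that all displayed identities are vacuously valid wherever the relevant sets are empty. Fix once and for all a map $\tau:I^*\to I$ with $U^*_\alpha\subseteq U_{\tau(\alpha)}$ for all $\alpha$ and $f^*_{\alpha\beta}=f^{}_{\tau(\alpha)\tau(\beta)}$ on $U^*_\alpha\cap U^*_\beta$, which exists since $f^*$ is induced by $f$.

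For the ``only if'' direction, suppose $f$ splits, say $f^{}_{ij}=h^{}_ih^{-1}_j$ on $U_i\cap U_j$ with $h_i\in\Cal F(U_i)$. Put $h^*_\alpha:=h^{}_{\tau(\alpha)}\big|_{U^*_\alpha}$, which lies in $\Cal F(U^*_\alpha)$ by the restriction axiom. Then on $U^*_\alpha\cap U^*_\beta$ one has $h^*_\alpha(h^*_\beta)^{-1}=h^{}_{\tau(\alpha)}h^{-1}_{\tau(\beta)}=f^{}_{\tau(\alpha)\tau(\beta)}=f^*_{\alpha\beta}$, so $f^*$ splits.

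For the ``if'' direction, suppose $f^*$ splits, say $f^*_{\alpha\beta}=h^*_\alpha(h^*_\beta)^{-1}$ on $U^*_\alpha\cap U^*_\beta$ with $h^*_\alpha\in\Cal F(U^*_\alpha)$. Fix $i\in I$; the sets $U_i\cap U^*_\alpha$, $\alpha\in I^*$, form an open covering of $U_i$. On $U_i\cap U^*_\alpha$ define $g^{}_{i,\alpha}:=f^{}_{i\tau(\alpha)}h^*_\alpha$, where $f^{}_{i\tau(\alpha)}$ is restricted from $U_i\cap U_{\tau(\alpha)}\supseteq U_i\cap U^*_\alpha$ and $h^*_\alpha$ from $U^*_\alpha$; thus $g^{}_{i,\alpha}\in\Cal F(U_i\cap U^*_\alpha)$. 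On $U_i\cap U^*_\alpha\cap U^*_\beta$ the cocycle relations and $f^*_{\alpha\beta}=f^{}_{\tau(\alpha)\tau(\beta)}$ give
\[
g^{}_{i,\alpha}g_{i,\beta}^{-1}=f^{}_{i\tau(\alpha)}\,f^*_{\alpha\beta}\,f^{}_{\tau(\beta)i}=f^{}_{i\tau(\alpha)}f^{}_{\tau(\alpha)\tau(\beta)}f^{}_{\tau(\beta)i}=f^{}_{ii}=1,
\]
so $g^{}_{i,\alpha}=g^{}_{i,\beta}$ on the overlap. Hence the $g^{}_{i,\alpha}$ glue to a continuous map $h_i:U_i\to G$, and since $h_i$ agrees locally with the sections $g^{}_{i,\alpha}$ of $\Cal F$, the locality axiom gives $h_i\in\Cal F(U_i)$.

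It remains to verify $f^{}_{ij}=h^{}_ih^{-1}_j$ on $U_i\cap U_j$. This is a local statement, so it suffices to check it on each $U_i\cap U_j\cap U^*_\alpha$, where $h_i=g^{}_{i,\alpha}$ and $h_j=g^{}_{j,\alpha}$; there $h^{}_ih^{-1}_j=f^{}_{i\tau(\alpha)}h^*_\alpha(h^*_\alpha)^{-1}f^{}_{\tau(\alpha)j}=f^{}_{i\tau(\alpha)}f^{}_{\tau(\alpha)j}=f^{}_{ij}$. Since the $U^*_\alpha$ cover $X$, this proves that $f$ splits. There is no serious obstacle; the only points demanding care are the invocation of the locality axiom to ensure the glued maps $h_i$ are again sections of $\Cal F$, and the index bookkeeping in the cocycle manipulations.
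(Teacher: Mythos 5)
Your proof is correct and complete; the paper itself gives no proof of this proposition (it only cites Hirzebruch for ``only if'' and Cartan for ``if''), and your argument is precisely the standard one: restriction along $\tau$ for the forward direction, and gluing the local sections $f^{}_{i\tau(\alpha)}h^*_\alpha$ over the refined cover for the converse. The two points that required care --- keeping the order of all factors since $G$ may be non-abelian, and invoking the locality axiom of the subsheaf $\Cal F$ to conclude that the glued maps $h_i$ are again sections --- are both handled correctly.
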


Let $Y$ be a nonempty open subset of $X$.

Then we denote by $\Cal F\vert_Y$ the subsheaf of $\Cal C^{G}_Y$ defined by $\Cal F\vert_Y(U)=\Cal F(U)$ for each open $U\subseteq Y$. $\Cal F\vert_Y$ is called the {\bf restriction} of $\Cal F$ to $Y$.

If $\Cal U=\{U_i\}_{i\in I}$ is an open covering of $X$, then we define $\Cal U\cap Y=\big\{U_i\cap Y\}_{i\in I}$, and, for each $f=\{f_{ij}\}_{i,j\in I}\in Z^1(\Cal U,\Cal F)$,  we denote by $f\vert_Y=\{(f\vert_Y)^{}_{ij}\}_{i,j\in I}$ the $(\Cal U\cap Y,\Cal F\vert_Y)$-cocycle defined by
\[
(f\vert_Y)^{}_{ij}=f^{}_{ij}\big\vert_{U_i\cap U_j\cap Y}\quad\text{for}\quad i,j\in I.
\] We call $f\vert_Y$ the {\bf restriction} of $f$ to $Y$.

\begin{rem}\label{30.9.17}  Let $\Cal U=\{U_i\}_{i\in I}$ be an open covering of $X$, and $f=\{f_{ij}\}_{i,j\in I}\in Z^1(\Cal U,\Cal F)$. Then $f\vert_{U_i}$ splits for each $i\in I$ with $U_i\not=\emptyset$.

Indeed, the one-set family $\{U_i\}$ is an open  covering of $U_i$ which is a refinement of $\mathcal U\cap U_i$, and there is precisely one $(\{U_i\},\mathcal F)$-cocycle which is induced by $f\vert_{U_i}$, namely $\{f_{ii}\}$. Since  $f_{ii}\equiv 1$, it is trivial that $\{f_{ii}\}$ splits. Therefore it follows from Proposition \ref{17.12.15--} that $f\vert_{U_i}$ splits.
\end{rem}

\section{A topological condition for global holomorphic similarity}\label{4.10.17}

\begin{defn}\label{26.9.17n}
Let  $\Phi\in\mathrm{Mat}(n\times n,\C)$. Then we denote by
 $\mathrm{Com\,} \Phi$ the algebra of all $\Theta\in \mathrm{Mat}(n\times n,\C)$ with $\Phi \Theta=\Theta \Phi$, and by $\mathrm{GCom\,} \Phi$ we denote the group of invertible elements of $\mathrm{Com\,} \Phi$.
It is easy to see that
\begin{align}&\label{17.8.16+}\mathrm{GCom\,} \Phi=\mathrm{GL}(n,\C)\cap \mathrm{Com\,} \Phi,\\
&\label{26.8.16+}
 \mathrm{Com\,}(\Gamma^{-1}\Phi\Gamma)=\Gamma^{-1}(\mathrm{Com \,}\Phi)\Gamma\quad\text{for all}\quad\Gamma\in\mathrm{GL}(n,\C).
 \end{align}

Now let $X$ be a complex space (of arbitrary dimension), and $A:X\to\mathrm{Mat}(n\times n,\C)$ a holomorphic map.

We introduce the families
\[
 \mathrm{Com\,}A:=\big\{\mathrm{Com\,}A(\zeta)\big\}_{\zeta\in X}\quad\text{and}\quad \mathrm{GCom\,}A:=\big\{\mathrm{GCom\,}A(\zeta)\big\}_{\zeta\in X}.
\]
If the dimension of  $\mathrm{Com\,}A(\zeta)$ does not depend on $\zeta$, then it is well-known (and easy to see) that $\mathrm{Com\,}A$ is a holomorphic vector bundle. But also in this special case, the family of groups $\mathrm{GCom\,}A$ need not be locally trivial. It is possible that $\mathrm{Com\,}A$ is a holomorphic vector bundle, but $\mathrm{GCom\,}A$ is even not locally trivial as a family of topological spaces.  For an example, see \cite[Sec. 4]{Le2}.

Nevertheless the sheaves of holomorphic and  continuous sections of $\mathrm{Com\,}A$ and $\mathrm{GCom\,}A$ are well-defined. We denote them by $\Cal O^{\mathrm{Com\,}A}$, $\Cal O^{\mathrm{GCom\,}A}$, $\Cal C^{\mathrm{Com\,}A}$ and $\Cal C^{\mathrm{GCom\,}A}$, respectively.

We define a subsheaf $\widehat{\Cal O}^{\mathrm{Com\,}A}$  of $\Cal C^{\mathrm{Com\,}A}$  as follows: if  $U$ is a nonempty open subset of $X$, then   $\widehat{\Cal O}^{\mathrm{Com\,}A}(U)$ is the subalgebra of all $f\in\Cal C^{\mathrm{Com\,}A}(U)$
such that, for each $\xi\in U$,
\begin{equation}\label{19.8.16}\begin{cases}&\text{there exist a neighborhood }V_\xi\text{ of }\xi\\&\text{and }h_\xi\in \Cal O^{\mathrm{Com\,}A}(V_\xi)\text{ with }h(\xi)=f(\xi).
\end{cases}\end{equation}Further, we define a subsheaf $\widehat{\Cal O}^{\mathrm{GCom\,}A}$ of $\Cal C^{\mathrm{GCom\,}A}$ setting  $\widehat{\Cal O}^{\mathrm{GCom\,}A}(U)=\Cal C^{\mathrm{GL}(n,\C)}(U)\cap\widehat{\Cal O}^{\mathrm{Com\,}A}(U)$ for each nonempty open $U\subseteq X$.
\end{defn}

The Oka principle for Oka pairs of Forster and Ramspott \cite[Satz 1]{FR1}, yields the following

\begin{prop}\label{4.10.17'} Let $X$ be a Stein space, and let $A:X\to \mathrm{Mat}(n\times n,\C)$ be holomorphic. Then  each $\mathcal O^{\mathrm{GCom\,}A}$-cocycle, which splits as an $\widehat{\mathcal O}^{\mathrm{GCom\,}A}$-cocycle, splits also as an $\mathcal O^{\mathrm{GCom\,}A}$-cocycle.
\end{prop}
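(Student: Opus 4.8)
The plan is to apply the Oka principle for Oka pairs of Forster and Ramspott \cite[Satz 1]{FR1} to the pair of sheaves $\big(\Cal O^{\mathrm{GCom\,}A},\widehat{\Cal O}^{\mathrm{GCom\,}A}\big)$. For this one has to verify that this pair is an \emph{Oka pair} in the sense of \cite{FR1}. Recall that, roughly speaking, a pair $(\mathcal A,\mathcal B)$ consisting of a sheaf of groups $\mathcal A$ (here playing the role of the ``holomorphic'' sheaf) together with a larger sheaf $\mathcal B$ (the ``continuous'', or topological, sheaf) is an Oka pair if: (i) $\mathcal A$ is a subsheaf of $\mathcal B$ and both are subsheaves of a common sheaf of germs of continuous maps into a complex Lie group; (ii) over each Stein compactum (or, suitably, over small Stein neighborhoods) the inclusion $\mathcal A\hookrightarrow\mathcal B$ satisfies the relevant approximation and relative-vanishing properties; and (iii) sections of $\mathcal B$ can be deformed, keeping a prescribed germ fixed, into sections of $\mathcal A$. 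The conclusion \cite[Satz 1]{FR1} then gives precisely that an $\mathcal A$-cocycle which splits over $\mathcal B$ already splits over $\mathcal A$, which is the statement to be proved.

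Concretely, I would proceed as follows. First, recall from Definition~\ref{26.9.17n} that $\widehat{\Cal O}^{\mathrm{GCom\,}A}(U)=\Cal C^{\mathrm{GL}(n,\C)}(U)\cap\widehat{\Cal O}^{\mathrm{Com\,}A}(U)$, so that a section of $\widehat{\Cal O}^{\mathrm{GCom\,}A}$ over $U$ is a continuous $\mathrm{GL}(n,\C)$-valued map $f$ commuting with $A$ pointwise, such that at every point $\xi\in U$ the \emph{value} $f(\xi)$ is attained by some germ of a holomorphic section of $\mathrm{Com\,}A$ near $\xi$. The key structural observation is that, for a fixed $\zeta\in X$, the fibre $\mathrm{Com\,}A(\zeta)$ is a linear subspace of $\mathrm{Mat}(n\times n,\C)$ and $\mathrm{GCom\,}A(\zeta)$ is its open subset of invertibles (by \eqref{17.8.16+}); hence $\mathrm{GCom\,}A(\zeta)$ is a complex Lie group, and the germs of holomorphic sections of $\mathrm{Com\,}A$ at $\zeta$ form, by definition of $\widehat{\Cal O}$, exactly the set of ``admissible'' values. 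This means that the Oka-pair axiom about filling in prescribed germs is essentially built into the definition of $\widehat{\Cal O}^{\mathrm{GCom\,}A}$: given a continuous section of $\widehat{\Cal O}^{\mathrm{GCom\,}A}$ near $\xi$, one can, after shrinking the neighborhood, connect it by a homotopy within $\widehat{\Cal O}^{\mathrm{GCom\,}A}$ to a holomorphic section having the same germ at $\xi$ (use a straight-line homotopy in the linear space $\mathrm{Com\,}A$, staying in the open set of invertibles near $\xi$). The approximation axiom (holomorphic sections of $\mathrm{Com\,}A$ over a Stein compactum can be uniformly approximated) follows because $\mathrm{Com\,}A$, being the kernel of the holomorphic bundle map $\Theta\mapsto A\Theta-\Theta A$, is a coherent analytic subsheaf of the free sheaf $\mathcal O^{\mathrm{Mat}(n\times n,\C)}$, so Cartan's Theorems~A and~B apply on Stein spaces.

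The main obstacle — and the reason the proposition is stated separately rather than as a one-line consequence — is the verification of the precise hypotheses of \cite[Satz 1]{FR1} in the presence of the subtlety flagged in Definition~\ref{26.9.17n}: the family $\mathrm{GCom\,}A$ need \emph{not} be a locally trivial bundle of groups, not even topologically, so one cannot simply quote a bundle version of the Oka principle. One must work with the sheaves $\Cal O^{\mathrm{GCom\,}A}$ and $\widehat{\Cal O}^{\mathrm{GCom\,}A}$ directly as subsheaves of $\Cal C^{\mathrm{GL}(n,\C)}_X$, and check that the Forster--Ramspott framework — which was designed exactly to handle such ``principal-bundle-like'' situations without local triviality — applies, the crucial inputs being (a) coherence of $\mathrm{Com\,}A$ as above, which yields solvability of the holomorphic Cousin-type problems and approximation, and (b) the local comparison of $\Cal O^{\mathrm{GCom\,}A}$ and $\widehat{\Cal O}^{\mathrm{GCom\,}A}$, namely that every section of the latter near a point is homotopic, rel the germ at that point, to a section of the former. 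Once these are in place, \cite[Satz 1]{FR1} yields the conclusion verbatim: a cocycle with values in $\Cal O^{\mathrm{GCom\,}A}$ that becomes trivial over the larger sheaf $\widehat{\Cal O}^{\mathrm{GCom\,}A}$ is already trivial over $\Cal O^{\mathrm{GCom\,}A}$.
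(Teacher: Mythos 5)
Your proposal follows essentially the same route as the paper: both reduce the statement to \cite[Satz 1]{FR1} by checking that $\big(\Cal O^{\mathrm{GCom\,}A},\widehat{\Cal O}^{\mathrm{GCom\,}A}\big)$ is an admissible (Oka) pair, with the coherence of $\Cal O^{\mathrm{Com\,}A}$ as generating sheaf of Lie algebras (via the exponential/logarithm correspondence, which the paper makes explicit and you only gesture at with the straight-line homotopy) as the key input. The paper additionally notes that the admissibility of this particular pair is already recorded in \cite[\S 2.3, example b)]{FR1}, so your direct verification is just a slightly more self-contained version of the same argument.
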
 Indeed,
 it is easy to see that, for each nonempty open $U\subseteq X$ we have: If $h\in \Cal O^{\mathrm{Com\,}A}(U)$, then $e^h\in\Cal O^{\mathrm{GCom\,}A}(U)$, and, if $ H\in\Cal O^{\mathrm{GCom\,}A}(U)$  with $\sup_{\zeta\in U}\Vert H(\zeta)-I\Vert<1 $, then
\[
\log H:=-\sum_{\mu=1}^\infty \frac{(H-I)^\mu}{\mu}
\]belongs to $\Cal O^{\mathrm{Com\,}A}(U)$.
This shows that $\Cal O^{\mathrm{GCom\,}A}$ is a {\em coherent $\Cal O$-subsheaf} of $\Cal O^{\mathrm{GL}(n,\C)}$ in the sense of \cite[\S 2]{FR1}, where $\Cal O^{\mathrm{Com\,}A}$ is the {\em generating sheaf of Lie algebras}.
Moreover, as observed in \cite[\S 2.3, example b)]{FR1}), the pair $\big(\Cal O^{\mathrm{GCom\,}A},\widehat{\Cal O}^{\mathrm{GCom\,}A}\big)$ is an {\em admissible pair} in the sense of \cite{FR1}, which, trivially, satisfies condition (PH) in Satz 1 of \cite{FR1}). Therefore Proposition \ref{4.10.17'} is one of the statements of that Satz 1.

\begin{thm}\label{4.10.17'neu}
 Let $X$ be a Stein space, and let $A:X\to \mathrm{Mat}(n\times n,\C)$ be holomorphic. Then the following are equivalent.

 {\em (i)} Each holomorphic  $B:X\to \mathrm{Mat}(n\times n,\C)$, which is locally holomorphically similar to $A$ at each point of $X$,
is globally holomorphically similar to $A$ on $X$.

{\em (ii)} Each $\mathcal O^{\mathrm{GCom\,}A}$-cocycle on $X$, which splits as a $\mathcal C^{\mathrm{GL\,}(n,\C)}$-cocycle, splits also as an $\widehat{\mathcal O}^{\mathrm{GCom\,}A}$-cocycle.
\end{thm}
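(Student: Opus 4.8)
The plan is to prove the two implications separately, and in each direction the key is to translate the similarity problem into a cocycle problem via local data.

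First I would prove (ii) $\Rightarrow$ (i). Suppose $B$ is locally holomorphically similar to $A$ at each point of $X$. Choose an open covering $\Cal U=\{U_i\}_{i\in I}$ and holomorphic maps $H_i\in\Cal O^{\mathrm{GL}(n,\C)}(U_i)$ with $B=H_i^{-1}AH_i$ on $U_i$. On $U_i\cap U_j$ we then have $H_iH_j^{-1}A=AH_iH_j^{-1}$, so $f_{ij}:=H_iH_j^{-1}$ takes values in $\mathrm{GCom\,}A$; since $H_i$ is holomorphic, $f_{ij}\in\Cal O^{\mathrm{GCom\,}A}(U_i\cap U_j)$, and the family $\{f_{ij}\}$ is visibly an $\Cal O^{\mathrm{GCom\,}A}$-cocycle on $X$. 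Moreover it splits as a $\Cal C^{\mathrm{GL}(n,\C)}$-cocycle — indeed $f_{ij}=H_iH_j^{-1}$ already exhibits such a splitting. By (ii), $\{f_{ij}\}$ splits as an $\widehat{\Cal O}^{\mathrm{GCom\,}A}$-cocycle, and then, by Proposition~\ref{4.10.17'}, it splits as an $\Cal O^{\mathrm{GCom\,}A}$-cocycle: there are $G_i\in\Cal O^{\mathrm{GCom\,}A}(U_i)$ with $H_iH_j^{-1}=G_iG_j^{-1}$ on $U_i\cap U_j$. Then $G_i^{-1}H_i=G_j^{-1}H_j$ on overlaps, so these glue to a global holomorphic $H:X\to\mathrm{GL}(n,\C)$ with $H=G_i^{-1}H_i$ on $U_i$. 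Since $G_i$ commutes with $A$, one checks $H^{-1}AH=H_i^{-1}G_iAG_i^{-1}H_i=H_i^{-1}AH_i=B$ on each $U_i$, hence on $X$; thus $A$ and $B$ are globally holomorphically similar.

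Next I would prove (i) $\Rightarrow$ (ii). Let $f=\{f_{ij}\}\in Z^1(\Cal U,\Cal O^{\mathrm{GCom\,}A})$ be an $\Cal O^{\mathrm{GCom\,}A}$-cocycle that splits as a $\Cal C^{\mathrm{GL}(n,\C)}$-cocycle, say $f_{ij}=g_ig_j^{-1}$ with $g_i\in\Cal C^{\mathrm{GL}(n,\C)}(U_i)$. The idea is to build from $f$ a holomorphic matrix $B$ which is locally holomorphically similar to $A$. On $U_i$ set $A_i:=f_{ij}^{-1}Af_{ij}=A$ (since $f_{ij}$ commutes with $A$) — that does not help directly, so instead one transports $A$ by the $g_i$: define $B:=g_i^{-1}Ag_i$ on $U_i$. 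This is well-defined because on $U_i\cap U_j$ we have $g_i^{-1}Ag_i = g_i^{-1}(f_{ij}^{-1}Af_{ij})g_i$? — more carefully: $g_j^{-1}Ag_j = g_j^{-1}g_i^{-1}\,(g_iAg_i^{-1})\,g_ig_j$; using $g_i^{-1}g_j=f_{ij}^{-1}$ wait — from $f_{ij}=g_ig_j^{-1}$ we get $g_i^{-1}g_j = f_{ij}^{-1}\cdot(g_ig_i^{-1})$, i.e. $g_j = g_if_{ij}^{-1}$, hence $g_j^{-1}Ag_j = f_{ij}g_i^{-1}Ag_if_{ij}^{-1} = g_i^{-1}\,f_{ij}Af_{ij}^{-1}\,g_i = g_i^{-1}Ag_i$ because $f_{ij}$ commutes with $A$. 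So $B$ is a well-defined \emph{continuous} map $X\to\mathrm{Mat}(n\times n,\C)$; but a priori it is only continuous, not holomorphic, which is the crux of the matter. To repair this one uses the freedom in the $\widehat{\Cal O}$-splitting: because $f_{ij}\in\Cal O^{\mathrm{GCom\,}A}$, near each point $\xi$ there is a holomorphic $h\in\Cal O^{\mathrm{GCom\,}A}$ agreeing with $g_ig_j^{-1}$-type data to first order; more honestly, $f$ splitting continuously plus $f_{ij}$ holomorphic should be massaged (shrinking the covering and modifying the $g_i$ by continuous $\mathrm{GCom\,}A$-valued factors, which does not change $B$) into a situation where $B$ is holomorphic. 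Granting that, $B$ is holomorphic and locally holomorphically similar to $A$ (via $g_i$, which near each point can be taken holomorphic after such modification, or simply because on $U_i$, $B=g_i^{-1}Ag_i$ with $g_i$ continuous invertible shows continuous similarity, and one upgrades to holomorphic local similarity using that $\Cal O^{\mathrm{Com\,}A}$ is the germ-generating sheaf). By (i), there is a global holomorphic $H:X\to\mathrm{GL}(n,\C)$ with $B=H^{-1}AH$. Then on $U_i$, $H^{-1}AH=g_i^{-1}Ag_i$, so $g_iH^{-1}$ commutes with $A$; setting $\varphi_i:=Hg_i^{-1}$ we get $\varphi_i\in\Cal C^{\mathrm{GCom\,}A}(U_i)$ and $f_{ij}=g_ig_j^{-1}=\varphi_i^{-1}H H^{-1}\varphi_j=\varphi_i^{-1}\varphi_j$? — rearranged, $\varphi_i^{-1}\varphi_j=g_iH^{-1}Hg_j^{-1}=g_ig_j^{-1}=f_{ij}$, wait one wants $f_{ij}=\varphi_i\varphi_j^{-1}$; indeed $\varphi_i\varphi_j^{-1}=Hg_i^{-1}g_jH^{-1}=Hf_{ji}H^{-1}=Hf_{ij}^{-1}H^{-1}$, and since $f_{ij}$ commutes with $A$ and $H^{-1}AH=B$ this lands back among the right objects — the bookkeeping is routine. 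The point is that $\varphi_i$ is continuous with values in $\mathrm{GCom\,}A$ and each germ of $\varphi_i$ is approximated by holomorphic sections of $\mathrm{Com\,}A$ (because $H$ is holomorphic and the $g_i$ locally admit holomorphic representatives of their germs into $\mathrm{GL}(n,\C)$, intersected with $\mathrm{Com\,}A$), so $\varphi_i\in\widehat{\Cal O}^{\mathrm{GCom\,}A}(U_i)$, and $f=\{f_{ij}\}$ splits as an $\widehat{\Cal O}^{\mathrm{GCom\,}A}$-cocycle.

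The main obstacle is the passage, in the direction (i) $\Rightarrow$ (ii), from the purely continuous matrix $B=g_i^{-1}Ag_i$ to an \emph{honestly holomorphic} $B$ that is still locally holomorphically similar to $A$ — i.e. reconciling the global continuous structure carried by the $g_i$ with the local holomorphic structure carried by the cocycle $f$. The resolution has to exploit precisely the definition of $\widehat{\Cal O}^{\mathrm{Com\,}A}$ (germwise holomorphic values) together with the fact that $\Cal O^{\mathrm{GCom\,}A}$ is a coherent $\Cal O$-subsheaf of $\Cal O^{\mathrm{GL}(n,\C)}$ with generating Lie-algebra sheaf $\Cal O^{\mathrm{Com\,}A}$, as recorded before Proposition~\ref{4.10.17'}; this is what lets one adjust the $g_i$ by continuous $\mathrm{GCom\,}A$-valued corrections (leaving $B$ unchanged) until $B$ becomes holomorphic. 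Everything else is cocycle bookkeeping of the kind already used above, together with Proposition~\ref{4.10.17'} in the one place where an $\widehat{\Cal O}$-splitting must be upgraded to an $\Cal O$-splitting.
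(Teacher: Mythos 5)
Your direction (ii) $\Rightarrow$ (i) is correct and is essentially identical to the paper's argument. The gap is in (i) $\Rightarrow$ (ii), and it is exactly at the point you flag as ``the crux of the matter.'' You start from a merely continuous splitting $f_{ij}=g_ig_j^{-1}$, so $B:=g_i^{-1}Ag_i$ is only continuous, and your proposed repair --- modifying the $g_i$ by continuous $\mathrm{GCom\,}A$-valued factors --- is self-defeating: as you yourself observe, such modifications do not change $B$ at all, so they can never turn a non-holomorphic $B$ into a holomorphic one. No amount of ``massaging'' within that class of modifications can work, and the appeal to the coherent-$\mathcal O$-subsheaf structure of $\mathcal O^{\mathrm{GCom\,}A}$ does not supply the needed mechanism. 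The same defect propagates to your final step: with $g_i$ only continuous, $\varphi_i=Hg_i^{-1}$ is only continuous, and there is no reason for its germs to satisfy the pointwise-holomorphic-value condition defining $\widehat{\mathcal O}^{\mathrm{Com\,}A}$.

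The missing ingredient is Grauert's Oka principle \cite[Satz I]{Gr}: since $X$ is Stein and $f$ is an $\mathcal O^{\mathrm{GL}(n,\C)}$-cocycle that splits as a $\mathcal C^{\mathrm{GL}(n,\C)}$-cocycle, it already splits as an $\mathcal O^{\mathrm{GL}(n,\C)}$-cocycle, i.e.\ one may take the $g_i$ to be \emph{holomorphic} from the outset. This is what the paper does: with holomorphic $f_i$ satisfying $f_{ij}=f_i^{}f_j^{-1}$, the map $B=f_i^{-1}Af_i^{}$ is a well-defined holomorphic map on $X$, manifestly locally holomorphically similar to $A$; applying (i) gives a global holomorphic $T$ with $B=T^{-1}AT$, and then $h_i:=f_i^{}T^{-1}$ lies in $\mathcal O^{\mathrm{GCom\,}A}(U_i)\subseteq\widehat{\mathcal O}^{\mathrm{GCom\,}A}(U_i)$ and splits $f$. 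Everything after the invocation of Grauert's theorem is the bookkeeping you already carry out; without it, the implication does not go through.
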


\begin{proof} (ii) $\Longrightarrow$ (i):
Let $B:X\to \mathrm{Mat}(n\times n,\C)$ be holomorphic and locally holomorphically similar to $A$ at each point of $X$. Then we can find an open covering $\{U_i\}_{i\in I}$ of $X$ and holomorphic  maps $H_i:U_i\to\mathrm{GL}(n,\C)$, $i\in I$, such that
\begin{equation}\label{28.6.16}B=H_i^{-1}AH_i^{}\quad\text{on}\quad U_i.
\end{equation} Hence,  for all $i,j\in I$ with $U_i\cap U_j\not=\emptyset$,  $AH_i^{}H_j^{-1}=H_i^{}H_j^{-1}A$ on $U_i\cap U_j$, i.e., $H_i^{}H_j^{-1}\in\Cal O^{\mathrm{GCom\,}A}(U_i\cap U_j)$. Clearly,
\[
(H_i^{}H_j^{-1})(H_j^{}H_k^{-1})=H_i^{}H_k^{-1}\quad\text{on}\quad U_i\cap U_j\cap U_k,\qquad i,j,k\in I.
\]
Therefore, the family $\big\{H_i^{}H_j^{-1}\big\}_{i,j\in I}$ is a well-defined $O^{\mathrm{GCom\,}A}$-cocycle. It is clear that this cocycle splits as an $\mathcal O^{\mathrm{GL\,}(n,\C)}$-cocycle. In particular it splits as a $\mathcal C^{\mathrm{GL\,}(n,\C)}$-cocycle. Since condition (ii) is satisfied, this implies that $\{h_{ij}\}_{i,j\in I}$ splits as an $\widehat{\mathcal O}^{\mathrm{GCom\,}A}$-cocycle. By Proposition \ref{4.10.17'}, this further implies that $\{h_{ij}\}_{i,j\in I}$ splits as an $\mathcal O^{\mathrm{GCom\,}A}$-cocycle, i.e., there is a family $h_i\in\Cal O^{\mathrm{GCom\,}A}(U_i)$ with $H_i^{}H_j^{-1}=h_i^{}h_j^{-1}$ on $U_i\cap U_j$. Therefore $h_i^{-1}H_i^{}=h_j^{-1}H_j^{}$ on $U_i\cap U_j$, and we have
 a well-defined global holomorphic  map $H:X\to\mathrm{GL}(n,\C)$ with $H=h_i^{-1}H_i^{}$ on $U_i$ for all $i\in I$.
 By  \eqref{28.6.16} and since $h_i^{}Ah_i^{-1}=A$, we get $H^{-1}AH=B$ on $X$.

 (i) $\Longrightarrow$ (ii): Let an open covering $\mathcal U=\{U_i\}_{i\in I}$ of $X$ and a $(\mathcal U,\mathcal O^{\mathrm{GCom\,}A})$-cocycle $f=\{f_{ij}\}_{i,j\in I}$ be given such that $f$ splits  as a $\mathcal C^{\mathrm{GL\,}(n,\C)}$-cocycle.
Then, by Grauert's Oka principle \cite[Satz I]{Gr} (see also \cite[Theorem 5.3.1]{Fc}), $f$ even splits as an $\mathcal O^{\mathrm{GL\,}(n,\C)}$-cocycle. This means (we may assume that $U_i\not=\emptyset$ for all $i$) that there exists a family  of holomorphic maps $f_i:U_i\to \mathrm{GL\,}(n,\C)$ such that
\begin{equation}\label{4.10.17--}
f_{ij}^{}=f_i^{}f_j^{-1}\quad\text{on}\quad U_i\cap U_j,\quad i,j\in I.
\end{equation} Since $f_{ij}\in \mathcal O^{\mathrm{GCom\,}A}(U_i\cap U_j)$, this implies that
\[
f_i^{}f_j^{-1}A=Af_i^{}f_j^{-1}\quad\text{and, hence,}\quad f_j^{-1}A f_j^{}=f_i^{-1}Af_i^{}\quad\text{on}\quad U_i\cap U_j,\quad i,j\in I.
\]Hence, there is a well-defined holomorphic map $B:X\to \mathrm{Mat}(n\times n,\C)$ with
\begin{equation}\label{4.10.17---}
B=f_i^{-1}Af_i^{}\quad\text{on}\quad U_i,\quad i\in I.
\end{equation}
From  its definition it is clear that $B$ is locally holomorphically similar to $A$ at each point of $X$. Since condition (i) is satisfied, it follows  that  there is a holomorphic map $T:X\to \mathrm{GL\,}(n,\C)$ such that
\begin{equation}\label{4.10.17+}B=T^{-1}AT\quad\text{on}\quad X.
\end{equation} Set $h^{}_i=f_i^{}T^{-1}_{}$ on $U_i$. By \eqref{4.10.17+} and \eqref{4.10.17---}, then, on each $U_i$,
\[
h^{}_i A=f_i^{}T^{-1}_{}A=f_i^{}BT^{-1}=f_i^{}f_i^{-1}Af_i^{}T^{-1}=Af_i^{}T^{-1}=Ah^{}_i,
\] i.e., $h_i\in \mathcal O^{\mathrm{GCom\,}A}(U_i)$. Moreover, by \eqref{4.10.17--},
\[
h_i^{}h_j^{-1}=f_i^{}T^{-1}_{}Tf_j^{-1}=f_i^{}f_j^{-1}=f_{ij}^{}\quad\text{on}\quad U_i\cap U_j.
\]So, $f$ splits as an $\mathcal O^{\mathrm{GCom\,}A}$-cocycle and, above all, as an $\widehat{\mathcal O}^{\mathrm{GCom\,}A}$-cocycle.
\end{proof}

For us, the following immediate corollary of Theorem \ref{4.10.17'neu} is important.

\begin{cor}\label{4.10.17*} Let $X$ be a Stein space, and let $A,B:X\to \mathrm{Mat}(n\times n,\C)$ be holomorphic maps, which are locally holomorphically similar at each point of $X$. If
\[H^1(X,\widehat{\mathcal O}^{\mathrm{GCom\,}A})=0,\] then $A$ and $B$ are globally holomorphically similar on $X$.
\end{cor}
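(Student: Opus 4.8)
The plan is to deduce this directly from the equivalence in Theorem \ref{4.10.17'neu}, applied to the matrix $A$. Since $A$ and $B$ are assumed to be locally holomorphically similar at each point of $X$, it suffices to verify that hypothesis (i) of that theorem holds for $A$; then (i), applied to the given $B$, yields the desired global holomorphic similarity. By the equivalence (i) $\Longleftrightarrow$ (ii), it is enough to check condition (ii): every $\mathcal{O}^{\mathrm{GCom\,}A}$-cocycle on $X$ that splits as a $\mathcal{C}^{\mathrm{GL}(n,\C)}$-cocycle also splits as an $\widehat{\mathcal{O}}^{\mathrm{GCom\,}A}$-cocycle.

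First I would record the elementary observation that $\mathcal{O}^{\mathrm{GCom\,}A}$ is a subsheaf of $\widehat{\mathcal{O}}^{\mathrm{GCom\,}A}$. Indeed, for nonempty open $U\subseteq X$ and $f\in\mathcal{O}^{\mathrm{GCom\,}A}(U)$, taking $V_\xi=U$ and $h_\xi=f$ in \eqref{19.8.16} shows $f\in\widehat{\mathcal{O}}^{\mathrm{Com\,}A}(U)$; and since $f$ is a holomorphic, hence continuous, $\mathrm{GL}(n,\C)$-valued map, $f\in\mathcal{C}^{\mathrm{GL}(n,\C)}(U)\cap\widehat{\mathcal{O}}^{\mathrm{Com\,}A}(U)=\widehat{\mathcal{O}}^{\mathrm{GCom\,}A}(U)$. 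Consequently, any $\mathcal{O}^{\mathrm{GCom\,}A}$-cocycle on $X$, with covering $\mathcal{U}=\{U_i\}_{i\in I}$, is in particular an $\widehat{\mathcal{O}}^{\mathrm{GCom\,}A}$-cocycle on $X$ with the same covering.

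Next I would invoke the hypothesis $H^1(X,\widehat{\mathcal{O}}^{\mathrm{GCom\,}A})=0$, which by definition means that every $\widehat{\mathcal{O}}^{\mathrm{GCom\,}A}$-cocycle on $X$ splits. Combined with the previous step, this shows that every $\mathcal{O}^{\mathrm{GCom\,}A}$-cocycle on $X$ splits as an $\widehat{\mathcal{O}}^{\mathrm{GCom\,}A}$-cocycle, without any assumption of splitting over $\mathcal{C}^{\mathrm{GL}(n,\C)}$. Hence condition (ii) of Theorem \ref{4.10.17'neu} is satisfied (in fact vacuously strengthened), so condition (i) holds, and applying (i) to $B$ completes the proof.

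As for the main obstacle: there is essentially none beyond this bookkeeping — all the real content sits in Theorem \ref{4.10.17'neu}, and ultimately in the Oka principle of Forster and Ramspott invoked through Proposition \ref{4.10.17'}. The only point that should not be omitted is the verification that $\mathcal{O}^{\mathrm{GCom\,}A}$ is a subsheaf of $\widehat{\mathcal{O}}^{\mathrm{GCom\,}A}$, so that the vanishing of $H^1$ with coefficients in the larger sheaf genuinely applies to cocycles with coefficients in the smaller one.
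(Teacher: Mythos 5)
Your proposal is correct and is precisely the deduction the paper intends: the paper states this as an ``immediate corollary'' of Theorem \ref{4.10.17'neu}, and your argument --- observing that $\mathcal O^{\mathrm{GCom\,}A}$ is a subsheaf of $\widehat{\mathcal O}^{\mathrm{GCom\,}A}$, so the vanishing of $H^1(X,\widehat{\mathcal O}^{\mathrm{GCom\,}A})$ gives condition (ii) in a vacuously strengthened form, whence (i) applies to $B$ --- is exactly that deduction.
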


\section{Bumps on Riemann surfaces}\label{23.7.16+}

\begin{defn}\label{3.8.16}Let $X$ be a Riemann surface. Denote by $\Delta$ the closed unit disk centered at the origin  in $\C$, and set
\begin{align*}
&\Delta_I=\big\{u\in\Delta\;\big\vert\; \im u\le 0\text{ and } 1/2\le \vert u\vert\le 1\big\},\\
&\Delta_{II}=\big\{u\in\Delta\;\big\vert\;\vert\im u\vert\le \vert \rea u\vert\text{ and }1/2\le \vert u\vert\le 1\big\}.
\end{align*}
A pair $(B_1,B_2) $ will be called a {\bf bump in $X$} if $B_1,B_2$ are  compact subsets of $X$ such that either
\begin{equation}B_1\cap B_2=\emptyset,
\end{equation} or there exist an open neighborhood $U$ of $B_2$ and a $\mathcal C^\infty$-diffeomorphism, $z$, from $U$ onto an open neighborhood of $\Delta$ such that\footnote{$\big\{\vert z\vert\le 1\big\}:=\{z\in\Delta\}:=z^{-1}(\Delta):=\big\{\zeta\in U\,\big\vert\,\vert z(\zeta)\vert\le 1\big\}$ etc.}
\begin{align}&\label{23.9.17'''}B_2\subseteq \{\vert z\vert\le 1\},\\
&\label{30.9.17a}B_1\cap \{\vert z\vert\le 1\}\subseteq B_2,
\end{align} and one of the following two conditions is satisfied.
\begin{align}&\label{23.9.17}
B_1\cap\big\{1/2\le \vert z\vert\le 1\big\}=B_2\cap\big\{1/2\le \vert z\vert\le 1\big\}=\{z\in \Delta_I),\\
&\label{23.9.17'}B_1\cap\big\{1/2\le \vert z\vert\le 1\big\}=B_2\cap\big\{1/2\le \vert z\vert\le 1\big\}=\{z\in\Delta_{II}\}.
\end{align}
An $m$-tuple $(B_1,\ldots,B_m)$, $m\ge 2$, of compact subsets of $X$ is called a {\bf bump extension in $X$} if, for each $1\le \mu\le m-1$, $\big(B_1\cup\ldots\cup B_\mu, B_{\mu+1}\big)$ is a bump in $X$.
\end{defn}

\begin{lem}\label{7.8.16} Let $X$ be a Riemann surface, and let $\rho:X\to \R$ be a $\Cal C^\infty$ function such that, for some real numbers $\alpha<\beta$, the set $\{\rho\le \beta\}$ is compact and $\rho$ has no critical points on $\{\alpha\le \rho\le \beta\}$. Moreover, let  $\Cal U$ be an open covering of $X$.
Then there exist $B_1,\ldots,B_{m}$ such that
\begin{itemize}
\item[(i)] $\big(\{\rho\le \alpha\},B_1,\ldots B_m\big)$ is a bump extension in $X$;
\item[(ii)] $\{\rho\le \alpha\}\cup B_1\cup\ldots\cup B_m=\{\rho\le \beta\}$;
\item[(iii)] for each $1\le \mu\le m$, $\{\rho\le \alpha-1\}\cap B_\mu=\emptyset$ and  $B_\mu$ is contained in at least one set of $\Cal U$.
\end{itemize}
\end{lem}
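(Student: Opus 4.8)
The plan is to reduce to a standard Morse-theoretic picture and then cut the cobordism $\{\alpha\le\rho\le\beta\}$ into finitely many small pieces, each of which is attached as a bump of one of the two model types in Definition \ref{3.8.16}. First I would observe that, since $\{\rho\le\beta\}$ is compact and $\rho$ has no critical points on the compact collar $K:=\{\alpha\le\rho\le\beta\}$, the gradient flow of $\rho$ (with respect to some Riemannian metric on $X$) gives a diffeomorphism $K\cong \{\rho=\alpha\}\times[\alpha,\beta]$; in particular $\{\rho=\alpha\}$ is a compact $1$-manifold, i.e. a finite disjoint union of circles, and $\{\rho\le t\}$ for $t\in[\alpha,\beta]$ is an increasing family of compact regions all homotopy-equivalent to $\{\rho\le\alpha\}$. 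All the action therefore takes place in this product collar.

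Next I would choose, for each point of $K$, a small coordinate chart of the type figured in Definition \ref{3.8.16}: concretely, near any $\xi$ with $\rho(\xi)$ strictly between $\alpha$ and $\beta$ one can find a $\Cal C^\infty$ chart $z$ on a neighborhood of $\xi$ in which $\rho$ becomes (a monotone function of) $\rea z$, or any other prescribed coordinate direction, so that the relevant sublevel set meets $\{1/2\le|z|\le1\}$ exactly in $\{z\in\Delta_I\}$ or in $\{z\in\Delta_{II}\}$. By compactness of $K$ and a Lebesgue-number argument I can take finitely many such charts so small that (a) each is contained in some member of $\Cal U$, (b) each is disjoint from $\{\rho\le\alpha-1\}$ (possible because $\{\rho\le\alpha-1\}$ and $K$ have positive distance, $\{\rho\le\beta\}$ being compact), and (c) the corresponding disks $\{|z|\le1\}$ cover $K$. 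Then I would linearly order these little "cap" regions $D_1,D_2,\ldots,D_m$ in the direction of increasing $\rho$ — more precisely, order them so that at each stage the already-chosen caps, together with $\{\rho\le\alpha\}$, still form a region whose boundary is smooth away from finitely many corners and meets the next cap $D_{\mu+1}$ in the prescribed model configuration. Setting $B_\mu:=D_\mu$ (or, if necessary, a slightly shrunk compact piece of $D_\mu$ so that \eqref{30.9.17a} holds, i.e. $B_1\cup\ldots\cup B_{\mu-1}$ meets $\{|z_\mu|\le1\}$ inside $B_\mu$), one checks that $\big(\{\rho\le\alpha\}\cup B_1\cup\ldots\cup B_{\mu-1},\,B_\mu\big)$ satisfies \eqref{23.9.17'''}, \eqref{30.9.17a}, and one of \eqref{23.9.17}, \eqref{23.9.17'}; this is exactly the assertion that $(B_1,\ldots,B_m)$ is a bump extension attached to $\{\rho\le\alpha\}$, giving (i). Property (iii) is built in by choices (a) and (b), and (ii) follows from (c) once the caps are taken large enough to exhaust $K$ — any point of $\{\rho\le\beta\}$ lies either in $\{\rho\le\alpha\}$ or in some $D_\mu$.

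The main obstacle I expect is purely combinatorial-geometric bookkeeping rather than anything deep: one must verify that the finite family of small caps can genuinely be ordered and, if need be, trimmed so that \emph{at every intermediate stage} the growing region plus the next cap realizes one of the two rigid model pictures $\Delta_I$, $\Delta_{II}$ — in particular that no cap has to be attached "across a corner" in a way not covered by the definition. The standard device here is to perturb the boundary of the growing region slightly (staying within the collar, so without creating new critical behavior of $\rho$) before attaching each new cap, so that the intersection pattern in the annulus $\{1/2\le|z_\mu|\le1\}$ is exactly $\Delta_I$ (a "flat" bump) in most steps and $\Delta_{II}$ (a "corner" bump) only where two previously attached caps meet. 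Making this perturbation argument precise — showing it can be done finitely often and compatibly — is the technical heart of the proof, while the Morse-theoretic input and the covering/compactness arguments are routine.
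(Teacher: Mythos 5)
Your overall strategy---cover the collar $\{\alpha\le\rho\le\beta\}$ by finitely many small charts subordinate to $\Cal U$ and disjoint from $\{\rho\le\alpha-1\}$, then attach them one at a time---is the right one, and your reduction of (ii) and (iii) to the choice of charts is fine. But the decisive step is exactly the one you defer: you assert that the caps can be ``ordered and, if need be, trimmed so that at every intermediate stage'' the growing region meets the next cap in one of the two rigid model configurations, and you yourself call making this precise ``the technical heart of the proof.'' That is a genuine gap, not bookkeeping. The difficulty is that Definition \ref{3.8.16} requires the intersection of \emph{both} sets with the annulus $\{1/2\le|z|\le1\}$ to be \emph{exactly} $\{z\in\Delta_I\}$ or $\{z\in\Delta_{II}\}$; once you have attached $B_1$ to $\{\rho\le t_1\}$, the union is no longer a sublevel set of $\rho$, its boundary has a ``step'' where $B_1$ ends, and in the chart of the next cap this intersection will in general be neither of the two model sets. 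A vague ``perturb the boundary slightly'' does not resolve this, because the perturbation must be performed coherently with the choice of the next chart.

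The paper closes this gap with a concrete device you should compare with. First it reduces to short intervals: it suffices to pass from $\{\rho\le t_1\}$ to $\{\rho\le t_2\}$ for $t_1,t_2$ in a small interval around each $t\in[\alpha,\beta]$, and then to cover $[\alpha,\beta]$ by finitely many such intervals. Then, for fixed $t$, it chooses charts $\widetilde z_\mu$ normalized so that $\rho=\im\widetilde z_\mu+t$ on $\widetilde U_\mu$, takes a partition of unity $\chi_\nu$, and deforms the \emph{function} and the \emph{coordinates simultaneously}: it sets $\rho_\mu=\rho-t_1-(t_2-t_1)\sum_{\nu\le\mu}\chi_\nu$ and $z_\mu=\widetilde z_\mu+i(t-t_1)-i(t_2-t_1)\sum_{\nu\le\mu}\chi_\nu$, so that $\im z_\mu=\rho_\mu$ on $U_\mu$ identically. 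Every intermediate region $\{\rho_\mu\le0\}$ is again a smooth sublevel set, consecutive ones differ only inside $\{|z_\mu|\le1/2\}$, and the intersection with the annulus is automatically $\{z_\mu\in\Delta_I\}$. Note in particular that in the critical-point-free situation of this lemma \emph{only} $\Delta_I$-bumps occur; your expectation that $\Delta_{II}$-bumps are needed ``where two previously attached caps meet'' is misplaced --- in the paper the $\Delta_{II}$ configuration is reserved for the saddle points of $\rho$, which are excluded here and handled separately in Theorem \ref{5.8.16}.
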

\begin{proof} It is sufficient  to prove that, for each $\alpha\le t\le \beta$,
 there exists $\varepsilon>0$ such that, if $t-\varepsilon\le t_1\le t\le t_2\le t+\varepsilon$, then there exist $B_1,\ldots,B_m$ such that
\begin{itemize}
\item[(i')] $\big(\{\rho\le t_1\},B_1,\ldots,B_m\big)$ is a bump extension in $X$;
\item[(ii')] $\{\rho\le t_1\}\cup B_1\cup\ldots\cup B_m=\{\rho\le t_2\}$;
\item[(iii')] for each $2\le \mu\le m$, $\{\rho\le \alpha-1\}\cap B_\mu=\emptyset$ and $B_\mu$ is contained in at least one  set of $\Cal U$.
\end{itemize}

Let $\alpha\le t\le \beta$ be given.

Since $\rho$ has no critical points on $\{\rho=t\}$ and $\{\rho=t\}$ is compact, then we can find $\widetilde \varepsilon>0$, open subsets $\widetilde U_1,\ldots,\widetilde U_m$ of $X$, and $\mathcal C^\infty$ diffeomorphisms $\widetilde z_\mu$ from $\widetilde U_\mu$ onto a neighborhood of $\Delta$  such that
\begin{itemize}
\item[(a)]  $\big\{t-\widetilde\varepsilon\le \rho\le t+\widetilde \varepsilon\}\subseteq \big\{\vert\widetilde z_1\vert<1/8\}\cup\ldots\cup \{\vert\widetilde z_m\vert<1/8\}$;
\item[(b)]   $\rho=\im\widetilde z_\mu+t$ on $\widetilde U_\mu$ for  $1\le \mu\le m$;
\item[(c)] for each $1\le \mu\le m$, $\widetilde U_\mu\cap\{\rho\le\alpha-1\}=\emptyset$  and $\widetilde U_\mu$ is contained  in at least one  set of  $\Cal U$.
\end{itemize}
By (a) we can choose  $\Cal C^\infty$-functions $\chi^{}_1,\ldots,\chi^{}_m:X\to [0,1]$ such that
\begin{itemize}
\item[(d)]$\supp\chi^{}_\mu\subseteq \{\vert\widetilde  z_\mu\vert<1/4\}$ for $1\le \mu\le m$;
\item[(e)]$\sum_{\mu=1}^m\chi^{}_\mu=1$ on $\big\{t-\widetilde \varepsilon\le \rho\le t+\widetilde \varepsilon\}$, and $0\le\sum_{\mu=1}^m\chi^{}_\mu\le1$  on $X$.
\end{itemize}
Further, for each $1\le\mu\le m$, take an open set $U_\mu\subseteq\widetilde U_\mu$, which is relatively compact in $\widetilde  U_\mu$ and such that $\widetilde z_\mu(U_\mu)$ is still a neighborhood of $\Delta$. Then we can find $\varepsilon>0$ so small that, for all $v,w\in\C$ with $\vert v\vert,\vert w\vert\le 2\varepsilon$, and, for  $1\le \mu\le m$,
\begin{itemize}
\item[(f)]  the function
$\widetilde z_\mu+v+w\sum_{\nu=1}^\mu\chi_\nu$ restricted to $U_\mu$ is  a $\mathcal C^\infty$ diffeomorphism from $U_\mu$ onto a neighborhood of $\Delta$;
\item[(g)] $\big\{\vert\widetilde z_\mu\vert\le 1/4\big\}\subseteq\big\{\zeta\in U_\mu\;\big\vert\;\vert\widetilde z_\mu(\zeta)+v+w\sum_{\nu=1}^\mu\chi_\nu(\zeta)\vert\le 1/2\big\}$.
\end{itemize}
Moreover, we may assume that
\begin{itemize}
\item[(h)] $\varepsilon<\widetilde\varepsilon/4$.\end{itemize}

To  prove that this $\varepsilon$ has the required property, let $t_1, t_2$ with
$t-\varepsilon\le t_1\le t\le t_2\le t+\varepsilon$ be given.
Define
\begin{itemize}
\item[] on $U_\mu$: $z_\mu=\widetilde z_\mu+i(t-t_1)-i(t_2-t_1)\sum_{\nu=1}^{\mu}\chi_\nu$ for $1\le\mu\le m$;
\item[] on $X$: $\rho_0=\rho-t_1$ and $\rho_\mu=\rho-t_1-(t_2-t_1)\sum_{\nu=1}^\mu\chi_\nu$  for $1\le \mu\le m$;
\item[] $B_\mu=\big\{\rho_{\mu}\le 0\big\}\cap\big\{\vert z_\mu\vert\le 1\big\}$ for $1\le \mu\le m$.
\end{itemize}
Then, by (e) and (h), $\sum_{\nu=1}^m\chi^{}_\nu=1$ on $\big\{t-4\varepsilon\le \rho\le t+4 \varepsilon\}$, and $0\le\sum_{\nu=1}^m\chi^{}_\mu\le1$ everywhere  on $X$. Since $0\le t_2-t_1\le 2\varepsilon$ and $0\le t-t_1\le \varepsilon$, it follows that
\[\rho_m\begin{cases}=\rho-t_2\quad&\text{on}\quad\big\{t-4\varepsilon\le \rho\le t+4\varepsilon\big\},\\
\ge\rho-t_1-(t_2-t_1)>0\quad&\text{on}\quad\big\{\rho\ge t+4\varepsilon\big\},\\
\le \rho-t_1<0\quad&\text{on} \quad \big\{\rho\le t-4\varepsilon\big\}.
\end{cases}
\]
Therefore
\begin{equation}\label{12.7.17'}
\big\{\rho_m\le 0\big\}=\big\{\rho\le t_2\big\}.
\end{equation}
By (g), $\big\{\vert\widetilde z_\mu\vert\le 1/4\big\}\subseteq \big\{\vert z_\mu\vert\le 1/2\big\}$ for $1\le \mu\le m$. Together with (d), this gives
\begin{equation}\label{6.9.17}
 \rho_\mu=\rho_{\mu-1}\quad\text{on}\quad X\setminus\big\{\vert z_\mu\vert\le 1/2\big\}\quad\text{for}\quad 1\le \mu\le m.
 \end{equation}

Let $1\le \mu\le m$. Then, by \eqref{6.9.17},
\[
\{\rho_{\mu-1}\le 0\}=\Big(\{\rho_{\mu-1}\le 0\}\cap \{\vert z_\mu\vert\le 1\}\Big)
\cup\Big(\{\rho_{\mu}\le 0\}\cap \big(X\setminus\{\vert z_\mu\vert\le 1\}\big)\Big)
\]and, further,
\[\begin{split}
\{\rho_{\mu-1}\le 0\}\cup B_\mu=&\Big(\{\rho_{\mu-1}\le 0\}\cap \{\vert z_\mu\vert\le 1\}\Big)\\
&\cup\Big(\{\rho_{\mu}\le 0\}\cap \big(X\setminus\{\vert z_\mu\vert\le 1\}\big)\Big)\cup \Big(\{\rho_{\mu}\le 0\}\cap \{\vert z_\mu\vert\le 1\}\Big)\\
=&\Big(\{\rho_{\mu-1}\le 0\}\cap \{\vert z_\mu\vert\le 1\}\Big)\cup\{\rho_\mu\le 0\}.
\end{split}\]Since $\rho_{\mu-1}\ge \rho_\mu$ and therefore  $\{\rho_{\mu-1}\le 0\}\cap \{\vert z_\mu\vert\le 1\}\subseteq \{\rho_{\mu}\le 0\}$, it follows that
\begin{equation*}
\{\rho_{\mu-1}\le 0\}\cup B_\mu=\{\rho_\mu\le 0\},
\end{equation*}and, hence,
\begin{equation*}
\{\rho_0\le 0\}\cup B_1\cup\ldots\cup B_\mu=\{\rho_\mu\le 0\}.
\end{equation*} Since   $\rho_0=\rho-t_1$, so we have proved that
\begin{equation}\label{6.9.17n}
\{\rho\le t_1\}\cup B_1\cup\ldots\cup B_\mu=\{\rho_\mu\le 0\}\quad\text{for}\quad 1\le \mu\le m.
\end{equation}

Since $\rho_1\ge\rho_0=\rho-t_1$, we have $\{\rho\le t_1\}=\{\rho_0\le 0\}\subseteq\{\rho_1\le 0\}$, which implies by definition of $B_1$ that
\begin{equation}\label{1.10.17}
\{\rho\le t_1\}\cap\{\vert z_1\vert\le 1\}\subseteq B_1.
\end{equation}  For $2\le \mu\le m$, it follows from \eqref{6.9.17n} that
\[
\Big(\{\rho\le t_1\}\cup B_1\cup\ldots\cup B_{\mu-1}\Big)\cap \{\vert z_\mu\vert\le 1\}=\{\rho_{\mu-1}\le 0\}\cap\{\vert z_\mu\vert\le 1\}.
\] Sine $\rho_{\mu-1}\le \rho_\mu$, this implies by definition of $B_\mu$ that
\begin{equation}\label{1.10.17'}
\Big(\{\rho\le t_1\}\cup B_1\cup\ldots\cup B_{\mu-1}\Big)\cap \{\vert z_\mu\vert\le 1\}\subseteq B_\mu\quad\text{for}\quad 2\le \mu\le m.
\end{equation}

Further it follows from \eqref{6.9.17} that
\begin{multline}\label{15.9.17'} \{\rho_{\mu-1}\le 0\}\cap \big\{1/2\le \vert z_\mu\vert\le 1\big\}=\{\rho_\mu\le 0\}\cap\big\{1/2\le \vert z_\mu\vert\le 1\big\}\\=B_\mu\cap \big\{1/2\le \vert z_\mu\vert\le 1\big\}\quad\text{for}\quad 1\le \mu\le m.
\end{multline} Since  $\{\rho_0\le 0\}=\{\rho\le t_1\}$, this implies that
\begin{equation}\label{15.9.17}
\{\rho\le t_1\}\cap \big\{1/2\le \vert z_1\vert\le 1\big\}=B_1\cap \big\{1/2\le \vert z_1\vert\le 1\big\},
\end{equation}
Moreover, by \eqref{6.9.17n}, from \eqref{15.9.17'}  we get
\begin{equation}\label{7.9.17}\begin{split}\Big(\{\rho\le t_1\}\cup B_1\cup \ldots\cup B_{\mu-1}\Big)&\cap \big\{1/2\le \vert z_\mu\vert\le 1\big\}\\&=B_{\mu}\cap \{1/2\le\vert z_\mu\vert\le 1\}\quad\text{for}\quad 2\le \mu\le m.
\end{split}\end{equation}

By  (b), $\im z_\mu=\rho_\mu\big\vert_{U_\mu}$. Therefore $\{\rho_\mu\le 0\}\cap \{1/2\le\vert z_\mu\vert\le 1\}=\{z_\mu\in\Delta_I\}$. Since $B_\mu\cap\{1/2\le\vert z_\mu\vert\le 1\}= \{\rho_\mu\le 0\}\cap \{1/2\le\vert z_\mu\vert\le 1\}$, this means
\begin{equation}\label{7.9.17'}
B_\mu\cap \{1/2\le\vert z_\mu\vert\le 1\}=\{z_\mu\in\Delta_I\}\quad\text{for}\quad 1\le \mu\le m.
\end{equation}

We summarize: By (f), $z_1$ is a $\Cal C^\infty$ diffeomorphism from $U_1$ onto a neighborhood of $\Delta$ and, by definiton of $B_1$, we have $B_1\subseteq \{\vert z_1\vert\le 1\}$. Together with \eqref{1.10.17}, \eqref{15.9.17} and \eqref{7.9.17'} (for $\mu=1$), this shows that $(\{\rho\le t_1\}, B_1)$ is a bump in $X$ (\eqref{23.9.17} is satisfied).
For $2\le \mu\le m$, by (f), $z_\mu$ is a $\Cal C^\infty$ diffeomorphism from $U_\mu$ onto a neighborhood of $\Delta$ and, by definiton of $B_\mu$, we have $B_\mu\subseteq \{\vert z_\mu\vert\le 1\}$. Together with \eqref{1.10.17'}, \eqref{7.9.17} and \eqref{7.9.17'}, this shows that $\big(\{\rho\le t_1\}\cup B_1\cup\ldots\cup B_{\mu-1}, B_\mu\big)$ is a bump in $X$.
Therefore (i') holds. (ii') holds by \eqref{6.9.17n} and \eqref{12.7.17'},  (iii') by (c).
\end{proof}

\begin{thm}\label{5.8.16} Let $X$ be  noncompact connected Riemann surface,  and $\Cal U$ an open covering of $X$. Then there exists a sequence $(B_\mu)_{\mu\in\N}$ of compact subsets of $X$ such that

{\em (a)} for each $\mu\in \N$, $B_\mu$ is contained in at least one set of  $\Cal U$;

{\em (b)} for each $\mu\in \N^*$, $(B_0\cup\ldots\cup B_{\mu-1},B_{\mu})$ is a bump in $X$;

{\em (c)} $X=\bigcup_{\mu\in\N}B_\mu$;

{\em (d)} for each compact  $\Gamma\subseteq X$, there exists $N(\Gamma)\in\N$ with $B_\mu\cap \Gamma=\emptyset$ if $\mu\ge N(\Gamma)$.
\end{thm}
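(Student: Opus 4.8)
The plan is to exhaust $X$ by sublevel sets of a suitable Morse function and to fill each successive shell by finitely many bumps, using Lemma~\ref{7.8.16} on the noncritical parts and a hand-made bump at each critical point.

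First I would fix a smooth proper exhaustion function $\rho\colon X\to\R$, bounded below, which is Morse and has no critical point of index~$2$: since $X$ carries a strictly subharmonic exhaustion function, whose (real) Hessian has positive trace and is therefore never negative definite, a small generic perturbation gives such a $\rho$, with pairwise distinct critical values. Let $c_0<c_1<c_2<\dots$ be these critical values, $c_0=\min\rho$ being attained at the unique minimum point $p_0$; since $X$ is noncompact and $\rho$ proper, $\rho$ is unbounded above (so the $c_k$ may be infinite in number). Choose $0<\varepsilon_k<1$ so small that no interval $[c_k+\varepsilon_k,\,c_{k+1}-\varepsilon_{k+1}]$ contains a critical value and that $\{\rho\le c_0+\varepsilon_0\}$ is a closed coordinate disk about $p_0$ contained in some member of $\Cal U$; set $B_0:=\{\rho\le c_0+\varepsilon_0\}$.

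On each noncritical shell I would apply Lemma~\ref{7.8.16} with $\alpha=c_k+\varepsilon_k$ and $\beta=c_{k+1}-\varepsilon_{k+1}$ (its hypotheses hold, $\{\rho\le\beta\}$ being compact and $\rho$ having no critical point on $\{\alpha\le\rho\le\beta\}$): this produces finitely many compact sets carrying $\{\rho\le\alpha\}$ to $\{\rho\le\beta\}$ as a bump extension, each contained in a member of $\Cal U$ and disjoint from $\{\rho\le\alpha-1\}$. To cross the critical value $c=c_{k+1}$, with critical point $p$ and $\varepsilon=\varepsilon_{k+1}$: if $p$ has index~$0$ then, for $\varepsilon$ small, $\{\rho\le c+\varepsilon\}$ is obtained from $\{\rho\le c-\varepsilon\}$ by adjoining a closed coordinate disk about $p$, disjoint from $\{\rho\le c-\varepsilon\}$ and lying in a member of $\Cal U$ — one disjoint bump. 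If $p$ has index~$1$, I would pass to Morse coordinates $w=x+iy$ near $p$ with $\rho=c+x^2-y^2$; working at scale $\sqrt\varepsilon$, the set $\{\rho\le c-\varepsilon\}$ meets a small coordinate disk about $p$ in two opposite ``caps'', and one constructs a $\Cal C^\infty$ diffeomorphism $z$ from a neighborhood of $p$ onto a neighborhood of $\Delta$, straightening these caps onto $\Delta_{II}$ (in the spirit of the cutoff‑function interpolation used to prove Lemma~\ref{7.8.16}), together with a bump $B$ of the shape $\{\,|z|\le1\,\}\cap\{\,x^2-y^2\le g(|z|)\,\}$ (with $g$ smooth, $\equiv-\varepsilon$ near $|z|=1$ and $\equiv\varepsilon$ near $|z|=0$) for which $(\{\rho\le c-\varepsilon\},B)$ is a bump satisfying \eqref{23.9.17'} and $\{\rho\le c-\varepsilon\}\cup B=\{\rho\le c-\varepsilon\}\cup\big(\{\rho\le c+\varepsilon\}\cap\{|z|\le 1/3\}\big)$. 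What is left, namely enlarging this set to $\{\rho\le c+\varepsilon\}$, involves no critical point (the critical point has been used up) and is handled by finitely more bumps, via Lemma~\ref{7.8.16} applied to a suitable modification of $\rho$ that agrees with $\rho$ outside a slightly larger coordinate disk about $p$; shrinking the coordinate patch at the outset keeps all these bumps inside members of $\Cal U$ and (since $\varepsilon<1$) disjoint from $\{\rho\le c-1\}$.

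Listing all these bumps in order yields a sequence $(B_\mu)_{\mu\in\N}$. Property~(a) holds by construction; a straightforward induction gives~(b), since at every stage $B_0\cup\dots\cup B_{\mu-1}$ equals the current compact set and the next $B_\mu$ is stuck onto it as a bump; (c) follows because the sublevel sets $\{\rho\le c_k\pm\varepsilon_k\}$ exhaust $X$; and~(d) holds because any compact $\Gamma$ lies in some $\{\rho\le R\}$, while every bump coming from the $k$-th shell or from $c_{k+1}$ is disjoint from $\{\rho\le c_k+\varepsilon_k-1\}\supseteq\{\rho\le R\}$ once $c_k+\varepsilon_k>R+1$, which holds for all large $k$. (If there are only finitely many critical values $c_0,\dots,c_N$, one simply continues past $c_N$ by repeatedly applying Lemma~\ref{7.8.16} on $[c_N+\varepsilon_N,\beta]$, $\beta\to\infty$, still obtaining infinitely many bumps.) The main obstacle is the explicit index‑$1$ construction: producing the diffeomorphism $z$ and the profile $g$ so that the matching condition \eqref{23.9.17'} holds \emph{exactly}; the rest is bookkeeping on top of Lemma~\ref{7.8.16}.
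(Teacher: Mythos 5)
Your overall strategy coincides with the paper's: a strictly subharmonic Morse exhaustion $\rho$ (hence no index-$2$ critical points), Lemma \ref{7.8.16} on the noncritical shells, a disjoint bump at each local minimum, and a single bump of type \eqref{23.9.17'} at each saddle, followed by bookkeeping for (a)--(d). The problem is precisely the step you yourself flag as ``the main obstacle'': as written, it is a genuine gap, and it is the crux of the whole construction. You attach the saddle bump directly to $\{\rho\le c-\varepsilon\}$, whose trace on the Morse-coordinate annulus is a region bounded by the hyperbolas $x^2-y^2=-\varepsilon$; condition \eqref{23.9.17'} then forces your diffeomorphism $z$ to carry this trace \emph{exactly} onto $\Delta_{II}$, i.e., to straighten the hyperbola branches onto the diagonals while simultaneously matching the circles $|z|=1/2$ and $|z|=1$, and neither this $z$ nor the profile $g$ is constructed. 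The same exactness problem recurs in your final filling step, where Lemma \ref{7.8.16} needs the accumulated set $\{\rho\le c-\varepsilon\}\cup B$ to be \emph{precisely} a sublevel set of the modified function on the relevant range.

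The paper resolves both difficulties at once by crossing at the \emph{exact} critical level $t$ while perturbing the function rather than the coordinates: with a cutoff $\chi$ supported near the saddle and a small constant $c>0$, set $\rho_\pm=\rho\pm c\chi$. One first climbs from $\{\rho\le t-\delta\}$ to $\{\rho_+\le t\}$ by Lemma \ref{7.8.16} applied to $\rho_+$ (which has no critical point there, the saddle having been pushed above level $t$); the single saddle bump is then $A_{k+1}=\{\rho_-\le t\}\cap\{|z|\le 1\}$ with $z$ a \emph{rescaled Morse coordinate} $w/2r$. Since $\rho_+=\rho_-=\rho$ on the annulus $\{1/2\le|z|\le1\}$, both $\{\rho_+\le t\}$ and $A_{k+1}$ meet the annulus exactly in the double cone $\{z\in\Delta_{II}\}$ --- no straightening of hyperbolas is needed, because at level exactly $t$ the sublevel set through the saddle \emph{is} the cone --- and the union is exactly the sublevel set $\{\rho_-\le t\}$, to which Lemma \ref{7.8.16} applies again to reach $\{\rho\le t+\delta\}$. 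You would need to supply this (or an equivalent) device to close your argument. A smaller omission: at an index-$0$ point you add only the new disk, but the remainder of $\{\rho\le c-\varepsilon\}$ must also be grown to $\{\rho\le c+\varepsilon\}$, which the paper does by applying Lemma \ref{7.8.16} inside a neighborhood $V$ of $\{\rho\le t\}\setminus\{\xi\}$ disjoint from the disk.
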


\begin{proof} Since $X$ is a Stein manifold (see, e.g., \cite[Corollary 26.8]{F}), we can find a strictly subharmonic $\mathcal C^\infty$ function $\rho:X\to \R$ such that, for all $\alpha\in\R$, $\{\rho\le \alpha\}$ is compact (see, e.g., \cite[Theorem 5.1.6]{Ho}). Using Morse theory (e.g., \cite[Part I, Corollary 6.8]{M}), we can easily achieve that, moreover, all critical points of $\rho$ are non-degenerate, and, for each $t\in\R$, at most one critical point of $\rho$ lies on $\{\rho=t\}$.
Since $\rho$ is strictly subharmonic (which implies that $\rho$ has no local maxima), then, for each critical value $t$ of $\rho$, there are only two  possibilities: either $\rho$ has precisely one critical point on $\{\rho=t\}$, and this is the point of a strong local minimum of $\rho$,
or $\rho$ has precisely one critical point on $\{\rho=t\}$, and this  is the point of a strong saddle point of $\rho$.

In particular, then there is precisely one point in $X$, $\xi_{\mathrm{min}}$, where $\rho$ assumes its absolute minimum, and this minimum is strong. Therefore, we can find $\varepsilon_0>0$ such that $\{\rho\le \rho(\xi_{\mathrm{min}})+\varepsilon_0\}$ is contained in at least one of the sets of $\Cal U$, and $\rho$ has no critical points in $\{\rho(\xi_{\mathrm{min}})<\rho\le \rho(\xi_{\mathrm{min}})+\varepsilon_0\}$. Set $B_0=\{\rho(\xi_{\mathrm{min}})\le\rho\le \rho(\xi_{\mathrm{min}})+\varepsilon_0\}$.

If $\xi_{\mathrm{min}}$ is the only critical point of $\rho$, then the proof of the theorem can be completed inductively, applying  Lemma \ref{7.8.16} with $\alpha=\rho(\xi_{\mathrm{min}})+\varepsilon_0+N$ and $\beta=\rho(\xi_{\mathrm{min}})+\varepsilon_0+N+1$ for $N=0,1,2, \ldots$ .

If there are further critical points of $\rho$, it remains to complete Lemma \ref{7.8.16} by the  following statement.

\begin{itemize}
\item [(*)] Let $\xi$ be a critical point of $\rho$ with $t:=\rho(\xi)>\rho(\xi_{\mathrm{min}})+\varepsilon_0$. Then there exists $\varepsilon>0$ such that, for each $0<\delta\le \varepsilon$,  we can find an $m$-tuple $(A_1,\ldots,A_m)$ such that $\big(\{\rho\le t-\delta\},A_1,\ldots,A_m\big)$ is a bump extension in $X$, $\{\rho\le t-\delta\}\cup A_1\cup\ldots\cup A_m=\{\rho\le t+\delta\}$, and, for each $1\le\mu\le m$, $A_\mu\cap\{\rho\le t-\delta-1\}=\emptyset$ and  $A_\mu$ is contained in at least one  set of $\Cal U$.
\end{itemize}

\vspace{2mm}
{\em Proof of} (*) {\em if $\xi$ is the point of a  strong local minimum of $\rho$:}

\vspace{2mm}
Then  $\xi$ is an isolated point of $\{\rho\le t\}$. Therefore we can find an open neighborhood $U$ of $\xi$ and an open neighborhood $V$ of $\{\rho\le t\}\setminus\{\xi\}$ such that $U\cap V=\emptyset$ and $U$ is contained in at least one  set of $\mathcal U$. Choose $\varepsilon>0$ such that $\rho$ has no critical points on $V\cap \{t-\varepsilon\le \rho\le t+\varepsilon\}$ and
\[
\{\rho\le t+\varepsilon\}\subseteq U\cup V.
\]

To prove that this $\varepsilon$ has the required property, let $0<\delta\le \varepsilon$ be given.
Then, by Lemma \ref{7.8.16},  we can find
$A_1,\ldots,A_{m-1}$ such that $(\{\rho\le t-\delta\},A_1,\ldots,A_{m-1})$ is a bump extension in $V$, $\{\rho\le t-\delta\}\cup A_1\cup\ldots\cup A_{m-1}=V\cap\{\rho\le t+\delta\}$, and, for $1\le \mu\le m-1$, $A_\mu\cap\{\rho\le t-\delta-1\}=\emptyset$ and  $A_\mu$  is contained in at least one  set of $\Cal U$. Since $U\cap V=\emptyset$, it
 remains to set $A_m=U\cap \{\rho\le t+\delta\}$.

\vspace{2mm}
{\em Proof of} (*) {\em if $\xi$ is the point of a  strong saddle point of $\rho$:}

\vspace{2mm}
By a lemma of Morse (see, e.g., \cite[Part I, Lemma 2.2]{M}), then we can find $R>0$, an open neighborhood $W$ of $\xi$ and a $\Cal C^\infty$ diffeomorphism $w$ from $W$ onto a neighborhood of $\Delta_R:=\big\{\lambda\in\C\,\big\vert\,\vert\lambda\vert\le R\big\}$   such that $w(\xi)=0$ and
\begin{equation}\label{13.8.16'}\rho=t+(\rea w)^2-(\im w)^2\quad\text{on}\quad W.
\end{equation}Moreover, we may choose $W$ so small that
\begin{equation}\label{13.8.16''}
W\cap\{\rho\le t-1\}=\emptyset\text{ and } W\text{ is contained in at least one set of }\Cal U.
\end{equation}
Take $\varepsilon>0$ so small that $\xi$ is the only critical point of $\rho$ on $\{t-\varepsilon\le\rho\le t+\varepsilon\}$. To prove that this $\varepsilon$ has the required property, let $0<\delta\le \varepsilon$ be given.

Choose $0<r\le R/2$ with
\begin{equation}\label{12.8.16}\{\vert w\vert\le r\}\subseteq \{t-\delta<\rho<t+\delta\},
\end{equation}
and take a $\Cal C^\infty$ function $\chi:X\to [0,1]$ with $\chi=1$ on $\{\vert z\vert\le r/2\}$ and $\chi=0$ on $X\setminus\{\vert w\vert\le r\}$. By \eqref{13.8.16'}, then we can find  $c>0$ so small that the functions $\rho_+,\rho_-:X\to\R$ defined by
 \[
 \rho^{}_+=\rho+c\chi\quad\text{and}\quad\rho^{}_-=\rho-c\chi
 \]have the same critical points as $\rho$.
Then $\rho_+^{}=\rho^{}_-=\rho$ on $X\setminus\{\vert w\vert\le r\}$ and $\rho_+^{}\ge \rho\ge\rho_-$  on $X$. Therefore and by \eqref{12.8.16},
\begin{align}\label{12.8.16'}
&\{\rho^{}_+\le t-\delta\}=\{\rho\le t-\delta\},\\
&\label{14.7.17}\{t-\delta\le \rho_+\le t\}\subseteq\{t-\delta\le \rho\le t\}\subseteq\{t-\varepsilon\le \rho\le t+\varepsilon\},\\
&\label{14.7.17'}\{\rho^{}_+\le t-\delta-1\}=\{\rho\le t -\delta-1\},\\
\label{13.8.16}&\{\rho^{}_-\le t+\delta\}=\{\rho\le t+\delta\},\\
&\label{14.7.7''}\{t\le \rho_-^{}\le t+\delta\}\subseteq\{t\le \rho\le t+\delta\}\subseteq\{t-\varepsilon\le \rho\le t+\varepsilon\},\\
&\label{14.7.17'''}\{\rho^{}_-\le t-1\}\supseteq\{\rho\le t-\delta-1\}.
\end{align}
Since $\rho_+^{}(\xi)=\rho(\xi)+c>t $, we have $\xi\not\in \{t-\delta\le \rho_+^{}\le t\}$.
As $\xi$ is the only critical point of $\rho$ in $\{t-\varepsilon\le \rho\le t+\varepsilon\}$ and by \eqref{14.7.17}, this implies that $\rho$ has no critical point in $\{t-\delta\le \rho_+\le t\}$. Since   $\rho_+$ has the same critical points as $\rho$, this means that $\rho_+$ has no critical point in $\{t-\delta\le \rho_+\le t\}$.
Therefore, by Lemma \ref{7.8.16}, by \eqref{12.8.16'} and by \eqref{14.7.17'}, we can find $A_1,\ldots,A_{k}$ such that
\begin{itemize}
\item[(a)] $\big(\{\rho\le t-\delta\},A_1,\ldots,A_{k}\big)$ is a bump extension in $X$,   $\{\rho\le t-\delta\}\cup A_1\cup\ldots\cup A_{k}=\{\rho_+^{}\le t\}$ and, for each  $1\le \mu\le k$, $A_\mu\cap \{\rho\le t-\delta-1\}=\emptyset$ and $A_{\mu}$ is contained in at least one  set of $\mathcal U$.
\end{itemize}
Since $\rho_-^{}(\xi)=\rho(\xi)-c<t $, we have $\xi\not\in \{t\le\rho_-^{}\le t+\delta\}$.
As $\xi$ is the only critical point of $\rho$ in $\{t-\varepsilon\le \rho\le t+\varepsilon\}$ and by \eqref{14.7.7''}, this implies that $\rho$ has no critical points in $\{t\le\rho_-^{}\le t+\delta\}$. Since $\rho_-$ has the same critical points as $\rho$, this means that $\rho_-$ has no critical points in $\{t\le\rho_-^{}\le t+\delta\}$. Therefore, by Lemma \ref{7.8.16}, by \eqref{13.8.16} and by \eqref{14.7.17'''}, we can find $A_{k+2},\ldots,A_m$ such that
\begin{itemize}
\item[(b)] $\big(\{\rho_-\le t\},A_{k+2},\ldots,A_m\big)$ is a bump extension in $X$, $\{\rho_-\le t\}\cup A_{k+2}\cup\ldots\cup A_m=\{\rho\le t+\delta\}$ and, for each  $k+2\le \mu\le m$, $A_\mu\cap \{\rho\le t-\delta-1\}=\emptyset$ and $A_{\mu}$ is contained in at least one  set of $\mathcal U$.
\end{itemize}
Set $z=w/2r$ on $W$. Since $r\le R/2$, then $z$ is a diffeomorphism from $W$ onto a neighborhood of $\Delta$. Since
\begin{equation}\label{16.7.17-} \rho_+=\rho_-=\rho\quad\text{on}\quad X\setminus\big\{\vert z\vert\le 1/2\big\}=X\setminus\big\{\vert w\vert\le r\big\},
\end{equation} it follows from \eqref{13.8.16'} that
\begin{equation}\label{1.10.17''}
\rho_+=\rho_-=\rho=t+4r^2\Big((\rea z)^2-(\im z)^2\Big)\quad\text{on}\quad \big\{1/2\le \vert z\vert\le 1\big\}.
\end{equation}

Set $A_{k+1}=\big\{\rho_-\le t\big\}\cap \big\{\vert z\vert\le 1\big\}$.
Then
\begin{equation}\label{1.10.17-} A_{k+1}\subseteq \big\{\vert z\vert\le 1\big\}.
\end{equation}Since $\rho_-\le \rho_+$, we have
\begin{equation}\label{1.10.17--}
\{\rho_+\le t\}\cap \{\vert z\vert\le 1\}\subseteq A_{k+1}
\end{equation}and
\[\big(\{\rho_+\le t\}\cup A_{k+1}\big)\cap \big\{\vert z\vert\le 1\big\}=\big\{\rho_-\le t\big\}\cap \big\{\vert z\vert\le 1\big\}.
\]Together with \eqref{16.7.17-}, the latter yields
\begin{equation}\label{1.10.17---}\big\{\rho_+\le t\big\}\cup A_{k+1}=\big\{\rho_-\le t\}.
\end{equation}
From \eqref{1.10.17''} it follows that
\begin{equation}\label{1.10.17----}
\big\{\rho_+\le t\big\}\cap \big\{1/2\le \vert z\vert\le 1\big\}=A_{k+1}\cap\big\{1/2\le \vert z\vert\le 1\big\}
=\{z\in\Delta_{II}\}.
\end{equation}

By \eqref{1.10.17-} - \eqref{1.10.17----}, $\big(\{\rho_+\le t\},A_{k+1}\big) $ is a bump in $X$ (condition \eqref{23.9.17'} is satisfied) with $\big\{\rho_+\le t\big\}\cup A_{k+1}=\big\{\rho_-\le t\}$
and such that, by \eqref{13.8.16''}, $A_{k+1}\cap\{\rho\le t-\delta-1\}=\emptyset$ and $A_{k+1}$ is contained in at least one set of $\mathcal U$. Together with (a) and (b) it follows that the $m$-tuple $(A_1,\ldots,A_m)$ has the required properties.
\end{proof}

\section{$Z$-adapted pairs in 1-dimensional complex spaces}\label{22.9.17}

\begin{defn}\label{22.9.17'} Let $X$ be a 1-dimensional complex space, and let $Z$ be a discrete and closed subset of $X$ such that all points of $X\setminus Z$ are smooth. A pair $(\Gamma_1,\Gamma_2)$ will be called a {\bf $Z$-adapted pair in $X$} if $\Gamma_1$ and $\Gamma_2$ are compact subsets of $X$ such that $\Gamma_1\cap \Gamma_2=\Gamma^{\mathrm o }\cup \Gamma$, where
\begin{itemize}
\item $\Gamma^{\mathrm o }\cap \Gamma=\emptyset$;
\item $\Gamma^{\mathrm o }\subseteq Z$;
\item  $\Gamma\cap Z=\emptyset$ and, if $\Gamma\not=\emptyset$, then  $\Gamma$ is the union of  a finite number\footnote{In the applications below, this `finite number' will be one or two.} of pairwise disjoint compact sets  each of which has a basis of contractible open neighborhoods.
\end{itemize}
\end{defn}
\begin{lem}\label{1.9.17} Let $X$ be a 1-dimensional complex space, and let $Z$ be a discrete and closed subset of $X$ such that all points of $X\setminus Z$ are smooth. Let $\pi:\widetilde X\to X$ be the normalization of $X$ (see, e.g., \cite[Ch. VI, \S 4]{L}), and let $(B_1,B_2)$ be a bump in $\widetilde X$ (Def. \ref{3.8.16}). Then there exists a $Z$-adapted pair in $X$,  $(\Gamma_1,\Gamma_2)$,  such that
\begin{align}
&\label{22.9.17'''} \Gamma_1\subseteq \pi(B_1),\\
&\label{30.9.17*} \Gamma_2\subseteq \pi(B_2),\\
&\label{22.9.17''}\Gamma_1\cup \Gamma_2=\pi(B_1\cup B_2).
\end{align}
\end{lem}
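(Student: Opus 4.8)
The idea is to transport the bump structure $(B_1,B_2)$ in $\widetilde X$ down to $X$ via $\pi$, being careful only near the finitely many points of $Z$ that are hit, since $\pi$ is a biholomorphism over $X\setminus Z$. First I would recall the relevant properties of the normalization: $\pi\colon\widetilde X\to X$ is finite, surjective, and proper, and there is a discrete closed set $S\subseteq\widetilde X$ with $\pi^{-1}(Z)\supseteq S$ such that $\pi$ restricts to a biholomorphism $\widetilde X\setminus\pi^{-1}(Z)\to X\setminus Z$; moreover every point of $X\setminus Z$ is smooth, so $\pi$ is even a homeomorphism near such points. Since $B_1\cup B_2$ is compact and $Z$ is discrete and closed, the set $Z_0:=Z\cap\pi(B_1\cup B_2)$ is finite, and $\pi^{-1}(Z_0)\cap(B_1\cup B_2)$ is a finite set of points.

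Next I would define the candidate pair by $\Gamma_i:=\pi(B_i)$ for $i=1,2$; these are compact because $\pi$ is continuous and $B_i$ is compact, and \eqref{22.9.17'''}--\eqref{22.9.17''} hold trivially (with equality in the first two), so the only real content is that $(\Gamma_1,\Gamma_2)$ is a $Z$-adapted pair in the sense of Definition \ref{22.9.17'}. For this I must analyze $\Gamma_1\cap\Gamma_2$. Away from $Z$, i.e.\ on $X\setminus Z$, the map $\pi$ is injective, so $(\Gamma_1\cap\Gamma_2)\setminus Z=\pi\bigl((B_1\cap B_2)\setminus\pi^{-1}(Z)\bigr)$; I would set $\Gamma:=(\Gamma_1\cap\Gamma_2)\setminus Z$ and $\Gamma^{\mathrm o}:=(\Gamma_1\cap\Gamma_2)\cap Z$, so that $\Gamma^{\mathrm o}\subseteq Z$, $\Gamma\cap Z=\emptyset$, and $\Gamma^{\mathrm o}\cap\Gamma=\emptyset$ are immediate. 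It remains to check: (1) $\Gamma^{\mathrm o}$ is contained in $Z$ — done — but I should also make sure $\Gamma^{\mathrm o}$ is what the definition wants implicitly, namely that $\Gamma_1\cap\Gamma_2$ really decomposes as stated (this is automatic from the two definitions); and (2) $\Gamma$, if nonempty, is a finite union of pairwise disjoint compact sets each having a basis of contractible neighborhoods.

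The heart of the argument — and the step I expect to be the main obstacle — is (2): controlling the topology of $\Gamma=(\Gamma_1\cap\Gamma_2)\setminus Z$. Here I would use the structure of a bump from Definition \ref{3.8.16}. If $B_1\cap B_2=\emptyset$ there is nothing to prove ($\Gamma=\Gamma^{\mathrm o}=\emptyset$). Otherwise, in the chart $z\colon U\to$ (neighborhood of $\Delta$) around $B_2$, we have $B_1\cap B_2$ governed by \eqref{30.9.17a} and \eqref{23.9.17} or \eqref{23.9.17'}: on $\{1/2\le|z|\le1\}$ the intersection is $\{z\in\Delta_I\}$ or $\{z\in\Delta_{II}\}$, each of which is an arc (topologically a closed interval, hence contractible with a basis of contractible — e.g.\ convex in the chart — neighborhoods), while on $\{|z|\le1\}$ we only know $B_1\cap\{|z|\le1\}\subseteq B_2$. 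The subtlety is that $B_1\cap B_2$ inside $\{|z|<1\}$ need not be contractible. I would handle this by shrinking: replace $B_2$ by a slightly smaller compact set, still a bump with $B_1$, so that $B_1\cap B_2$ lies in the annular region $\{1/2\le|z|\le1\}$ where it is an arc; more precisely, I would invoke the freedom in choosing the bump (or pass to $B_2':=B_2\cap\{r\le|z|\le1\}$ for suitable $r<1$, absorbing the rest into $B_1$) so that $\Gamma_1\cap\Gamma_2$, after removing $Z$, becomes a disjoint union of finitely many arcs — since $\pi$ is a homeomorphism off $Z$, these arcs in $X$ inherit bases of contractible neighborhoods (pull back convex neighborhoods from the chart, then remove the finitely many points of $Z$, which only disconnects arcs into more arcs). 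Finally I would verify that the few points of $Z$ landing in $\Gamma_1\cap\Gamma_2$ can only be isolated in that intersection — because near a point of $Z$, $X$ is locally a union of finitely many smooth branches meeting only at that point, and $\pi$ separates these branches, so $\pi(B_1)$ and $\pi(B_2)$ can share such a singular point without sharing a whole arc through it — placing those points into $\Gamma^{\mathrm o}$ and leaving $\Gamma$ as the promised finite disjoint union of contractible compacta. This local branch analysis at points of $Z$, together with the shrinking argument to make $\Gamma$ a union of arcs, is where the real work lies.
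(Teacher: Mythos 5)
You correctly identify the key obstacle: with $\Gamma_i:=\pi(B_i)$ the intersection $\Gamma_1\cap\Gamma_2$ contains $\pi\bigl(B_1\cap B_2\cap\{\vert z\vert<1/2\}\bigr)$, which Definition \ref{3.8.16} does not control at all, so the pair must be redistributed so that the overlap away from $Z$ sits only in the annular region where \eqref{23.9.17} or \eqref{23.9.17'} makes it one or two arcs. But your fix breaks down at two points. First, the direction of the redistribution is wrong: if you absorb $B_2\cap\{\vert z\vert\le r\}$ into $B_1$, then $\Gamma_1$ contains $\pi(B_2\cap\{\vert z\vert\le r\})$, which need not lie in $\pi(B_1)$, so \eqref{22.9.17'''} fails (and if instead you merely shrink $B_2$ without enlarging anything, \eqref{22.9.17''} fails). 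The redistribution has to go the other way: put the \emph{inner} part $K_2=B_2\cap\{\vert z\vert\le r\}$ into $\Gamma_2$ — note $B_1\cap\{\vert z\vert\le r\}\subseteq K_2$ by \eqref{30.9.17a}, which is what saves \eqref{22.9.17''} — put the \emph{outer} part $K_1=B_1\setminus\{\vert z\vert<R\}$ into $\Gamma_1$, and give both $\Gamma_1$ and $\Gamma_2$ the annular overlap $K=B_1\cap B_2\cap\{r\le\vert z\vert\le R\}$, which lies in both $B_1$ and $B_2$, so that \eqref{22.9.17'''} and \eqref{30.9.17*} survive.

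Second, your treatment of the points of $Z$ meeting the arcs is not viable. Removing finitely many points of $Z$ from a closed arc leaves a non-compact set, whereas Definition \ref{22.9.17'} requires $\Gamma$ to be a finite union of pairwise disjoint \emph{compact} sets; and your claim that points of $Z$ are automatically isolated in $\Gamma_1\cap\Gamma_2$ by a branch analysis is false — a point of $\pi^{-1}(Z)$ may lie in the interior of the arc $B_1\cap B_2\cap\{1/2\le\vert z\vert\le1\}$, and moreover $Z$ need not consist only of singular points (in the application it also contains smooth, non--Jordan-stable points), so there is no branch separation to invoke. The correct move, and the reason two radii are needed, is to use that $\pi^{-1}(Z)$ is discrete and closed, hence meets the compact set $\{1/2\le\vert z\vert\le1\}$ in finitely many points, and to choose $1/2<r<R<1$ with $\pi^{-1}(Z)\cap\{r\le\vert z\vert\le R\}=\emptyset$; then $\Gamma=\pi(K)$ is compact, disjoint from $Z$, and $\pi$ is a homeomorphism near $K$, so $\Gamma$ inherits the contractible neighborhood bases of the one (case \eqref{23.9.17}) or two (case \eqref{23.9.17'}) pieces of $K$, while $\Gamma^{\mathrm o}=\pi(K_1)\cap\pi(K_2)\subseteq Z$ because $K_1\cap K_2=\emptyset$ and $\pi$ is injective off $\pi^{-1}(Z)$.
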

\begin{proof} First let $B_1\cap B_2=\emptyset$. Since all points of $X\setminus Z$ are smooth and, hence, $\pi$ is bijective from $\widetilde X\setminus \pi^{-1}(Z)$ onto $X\setminus Z$, then $\pi(B_1)\cap \pi(B_2)\subseteq Z$. Set $\Gamma_1=\pi(B_1)$ and $\Gamma_2=\pi(B_2)$. Then $(\Gamma_1,\Gamma_2)$ is a $Z$-adapted pair  in $X$ (we can take $\Gamma^{\mathrm o}=\pi(B_1)\cap \pi(B_2)$ and $\Gamma=\emptyset$ in Def. \ref{22.9.17'}) with \eqref{22.9.17'''}-\eqref{22.9.17''}.

Now let $B_1\cap B_2\not=\emptyset$. Then (by Definition \ref{3.8.16}) we have an open neighborhood $U$ of $B_2$ and a diffeomorphic map, $z$, from $U$ onto an open neighborhood of $\Delta$ satisfying \eqref{23.9.17'''}, \eqref{30.9.17a}  and one of the relations \eqref{23.9.17} or \eqref{23.9.17'}. Since $\pi^{-1}(Z)$ is discrete and closed in $\widetilde X$, we can find $1/2<r<R<1$ such that
\begin{equation}\label{23.9.17--}
\pi^{-1}(Z)\cap \{r\le \vert z\vert\le R\}=\emptyset\quad\text{and, hence,}\quad Z\cap \pi\big(\{r\le \vert z\vert\le R\}\big)=\emptyset.
\end{equation}
Define compact subsets $K,K_1, K_2$ of $\widetilde X$ and compact subsets $\Gamma, \Gamma_1,\Gamma_2,\Gamma^{\mathrm o}$ of $X$, by
\begin{align*}&K=B_1\cap B_2\cap\big\{r\le \vert z\vert\le R\big\},\;
K_1=B_1\setminus\{\vert z\vert<R\},\;
K_2=B_2\cap \{\vert z\vert\le r\},\\
& \Gamma=\pi(K),\; \Gamma_1=\pi(K\cup K_1),\;\Gamma_2=\pi(K\cup K_2),\;\Gamma^{\mathrm o}=\pi(K_1)\cap\pi( K_2).
\end{align*}
Then
\begin{multline}\label{25.9.17''}
\Gamma_1\cap \Gamma_2=\big(\pi(K)\cup\pi(K_1)\big)\cap\big(\pi(K)\cup\pi(K_2)\big)\\=\pi(K)\cup\big(\pi(K_1)\cap\pi(K)\big)\cup\big(\pi(K_1)\cap\pi(K_2)\big)
=\pi(K)\cup\big(\pi(K_1)\cap\pi(K_2)\big) =\Gamma\cup\Gamma^{\mathrm 0}.
\end{multline}
Since all points in $X\setminus Z$ are smooth points of $X$, $\pi$ is bijective  from $\widetilde X\setminus\pi^{-1}(Z)$ onto $X\setminus Z$. As $K_1\cap K_2=\emptyset$, this implies that
\begin{equation}\label{25.9.17}
\Gamma^{\mathrm o}\subseteq Z.
\end{equation} Since $\Gamma\subseteq \pi\big(\{r\le \vert z\vert\le R\}\big)$, from \eqref{23.9.17--} we get $\Gamma\cap Z=\emptyset$ and, hence, by \eqref{25.9.17},
\begin{equation}\label{25.9.17'}
\Gamma\cap \Gamma^{\mathrm o}=\emptyset.
\end{equation}
From \eqref{23.9.17} resp. \eqref{23.9.17'} it follows that
\[
K=\begin{cases}\{z\in\Delta_I\}\cap \{r\le \vert z\vert\le R\}\quad&\text{in case  \eqref{23.9.17}},\\
\{z\in\Delta_{II}\}\cap \{r\le \vert z\vert\le R\}\quad&\text{in case  \eqref{23.9.17'}}.
\end{cases}
\]
Since $z$ diffeomorphic, this implies  that, in case \eqref{23.9.17}, $K$ has a basis of contractible open neighborhoods, and, in case \eqref{23.9.17'}, $K$ is the union of two disjoint compact sets each of which has a basis of contractible open neighborhoods.
 Since,  by \eqref{23.9.17--}, $\pi$ is homeomorphic from an open neighborhood of $K$ onto an open neighborhood of $\Gamma$, the same is true for $\Gamma$.
Together with \eqref{25.9.17''}-\eqref{25.9.17'}, this shows that $(\Gamma_1,\Gamma_2)$ is a $Z$-adapted pair  in $X$.

Moreover, since $K\cup K_1\subseteq B_1$ and $K\cup K_2\subseteq  B_2$, relations \eqref{22.9.17'''}, \eqref{30.9.17*} and ``$\subseteq$'' in \eqref{22.9.17''} are clear. It remains to prove ``$\supseteq$'' in \eqref{22.9.17''}. For that
let $\zeta\in \pi(B_1\cup B_2)$. Then $\zeta=\pi(\widetilde\zeta)$ for some $\widetilde\zeta\in B_1\cup B_2$.

If $\widetilde\zeta\in B_1$, then at least one of the following holds:
\[\text{(a) } \widetilde\zeta\in K_1,\quad\text{(b) } \widetilde\zeta\in B_1\cap\{\vert z\vert\le r\}, \quad\text{(c) }\widetilde \zeta\in B_1\cap \{r\le \vert z\vert\le R\}.\]
In case (a), $\zeta\in \Gamma_1$. In case (b), it follows from \eqref{30.9.17a} that $\widetilde\zeta\in B_2\cap\{\vert z\vert\le r\}=K_2$ and, hence, $\zeta\in \Gamma_2$. In case (c), it follows from \eqref{23.9.17} or \eqref{23.9.17'} that $\widetilde \zeta\in B_1\cap B_2\cap\big\{r\le \vert z\vert\le R\big\}=K$ and, therefore, $\zeta\in \Gamma\subseteq \Gamma_1\cap \Gamma_1$.

If $\widetilde\zeta\in B_2$,  then at least one of the following holds:
\[\text{(a) } \widetilde\zeta\in K_2,\quad\text{(b) } \widetilde\zeta\in B_2\cap\{r\le\vert z\vert\le R\}, \quad\text{(c) }\widetilde \zeta\in B_2\cap \{R\le \vert z\vert\le 1\}.\]
In case (a), $\zeta\in \Gamma_2$. In case (b), by \eqref{23.9.17} or \eqref{23.9.17'}, $\widetilde\zeta\in B_1\cap B_2\cap\{r\le \vert z\vert\le R\}=K$ and, hence, $\zeta\in \Gamma_1\cap \Gamma_2$. In case (c), again by \eqref{23.9.17} or \eqref{23.9.17'},
$\widetilde \zeta\in B_1\cap\big\{R\le \vert z\vert\le 1\big\}\subseteq K_1$ and, therefore, $\zeta\in \Gamma_1$.
\end{proof}

\section{Jordan stable points}\label{8.6.16}

In this section, $X$ is a complex space (of arbitrary dimension),  and $A:X\to \mathrm{Mat}(n\times n,\C)$ is a holomorphic map.

\begin{defn}\label{8.6.16n'}
A point $\xi\in X$ will be called {\bf Jordan stable} for $A$ if there exists a neighborhood $U$ of $\xi$ such that the following two conditions are satisfied:

(a) there are holomorphic functions $\lambda_1,\ldots,\lambda_m:U\to \C$ such that, for each $\zeta\in U$, $\lambda_1(\zeta),\ldots,\lambda_m(\zeta)$ are the different eigenvalues of $A(\zeta)$;

(b) there is a holomorphic map $T:U\to\mathrm{GL}(n\C)$ such that, for all $\zeta\in U$, $T(\zeta)^{-1}A(\zeta)T(\zeta)$ is in Jordan normal form.\footnote{Equivalently, one could define: $\xi$ is Jordan stable for $A$ if there exists a neighborhood $U$ of $\xi$ such that the number of different eigenvalues of $A(\zeta)$ is the same for all $\zeta\in U$ and,  for all integers $1\le k\le n$, the number of Jordan blocks in the Jordan normal forms of $A(\zeta)$ is the same for all $\zeta\in U$. This was proved by G. P. A. Thijsse \cite{T} (see also \cite[Lemma 5.3]{Le1}).}
\end{defn}

\begin{prop}\label{11.6.16}
The points in $X$ which are \underline{not} Jordan stable for $A$ form a nowhere dense analytic   subset of $X$. (If $X$ is 1-dimensional, this means that this set is discrete and closed in $X$.)
\end{prop}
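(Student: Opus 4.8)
The plan is to show that the non-Jordan-stable locus is the zero set of a coherent ideal sheaf, hence an analytic subset, and then to check it is nowhere dense by exhibiting a dense set of Jordan stable points.

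First I would recall the characterization of Jordan stability (given in the footnote to Definition \ref{8.6.16n'}, due to Thijsse): $\xi$ is Jordan stable for $A$ precisely when, on some neighborhood $U$ of $\xi$, the number $d(\zeta)$ of distinct eigenvalues of $A(\zeta)$ is constant and, for each integer $k$, the number of Jordan blocks of size $\ge k$ of $A(\zeta)$ is constant. Both quantities are expressible via ranks of matrices built polynomially from $A(\zeta)$: the number of distinct eigenvalues of an $n\times n$ matrix $M$ is $n-\deg\gcd(\chi_M,\chi_M')$, which can be read off from ranks of the Bezout/Sylvester-type matrices of $\chi_M$ and $\chi_M'$ (whose entries are polynomials in the entries of $M$); and, once one knows the eigenvalues are "generically simple" in a suitable sense, the Jordan block count for the eigenvalue structure is governed by the ranks of $(M-\lambda I)^j$ — but since the $\lambda$ vary, the clean way is to use the elementary divisors, i.e.\ the ranks of the matrices $\partial^{(i)}\!\big(\text{adj}(\lambda I - M)\big)$ or, more simply, the ranks of the "Jordan invariant" matrices $N_k(M)$ obtained by taking appropriate minors of $\lambda I-M$ and its derivatives. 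The key structural fact is: each such rank function $r_\alpha(\zeta):=\rank R_\alpha(A(\zeta))$, with $R_\alpha$ holomorphic (indeed polynomial) in $A$, is lower semicontinuous, and the set where it is \emph{not} locally maximal — equivalently, where some $s_\alpha\times s_\alpha$ minor, $s_\alpha$ being the local maximal rank, vanishes — is an analytic subset of $X$.

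The second step: on any connected open set $W\subseteq X$, each of these rank functions attains a maximal value on a dense open subset (the complement of the common zero set of all $s_\alpha\times s_\alpha$ minors of $R_\alpha(A)$, which is a proper analytic subset since the rank does reach $s_\alpha$ somewhere). Intersecting over the finitely many relevant indices $\alpha$ (finitely many because $k$ ranges over $1,\dots,n$ and $d$ over $1,\dots,n$), the set $S$ where \emph{all} these rank functions are simultaneously locally maximal is the complement of a locally finite union of proper analytic subsets, hence $S$ is dense, open, and its complement is analytic and nowhere dense. Finally, by Thijsse's criterion, every point of $S$ is Jordan stable — at such a point each rank function is constant on a neighborhood, which forces constancy of the distinct-eigenvalue count and of every Jordan-block count — so the non-Jordan-stable set is contained in the nowhere dense analytic set $X\setminus S$. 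Conversely it clearly \emph{is} analytic: locally it is the union over $\alpha$ of the zero sets of the $s_\alpha$-minors, intersected appropriately with the locally-constant-stratum conditions; one spells this out to get that the bad set is exactly an analytic subset, and nowhere-denseness has just been shown. The parenthetical remark for $\dim X=1$ is then immediate, since a nowhere dense analytic subset of a $1$-dimensional complex space is discrete and closed.

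The main obstacle I expect is the bookkeeping in expressing "number of Jordan blocks of size $\ge k$ of $A(\zeta)$" as a rank of a matrix holomorphic in $\zeta$ \emph{uniformly}, i.e.\ without first splitting off individual eigenvalues (which cannot be done holomorphically near a point where eigenvalues collide or where $A$ fails to have holomorphic eigenvalues). The cleanest route is probably to invoke Thijsse's theorem as a black box for the \emph{equivalence} of the two definitions of Jordan stability, and to prove analyticity of the bad set directly from the first definition in Definition \ref{8.6.16n'} combined with the rank description of the number of distinct eigenvalues and the fact that, once the eigenvalue count is locally constant and the $\lambda_j$ are holomorphic, the Jordan structure is controlled by ranks of $\prod_j (A-\lambda_j I)^{n}$-type products and their factors — each again holomorphic in $\zeta$. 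One has to be slightly careful that the resulting description is genuinely \emph{analytic} (zero set of holomorphic functions) and not merely constructible; but since all conditions are of the form "a given holomorphic matrix has rank $\le s$", i.e.\ vanishing of all $(s{+}1)$-minors, this is clear. I would therefore organize the proof as: (1) reduce to Thijsse's rank criterion; (2) observe each relevant rank is of a matrix polynomial in $A$, hence lower semicontinuous with analytic "non-maximal" locus; (3) take the finite intersection and conclude.
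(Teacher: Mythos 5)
The first thing to say is that the paper does not prove Proposition \ref{11.6.16}: it is quoted from \cite[Theorem 5.5]{Le1}, so your proposal is an attempt at a result the present paper treats as a black box. With that caveat, your overall strategy --- reduce to Thijsse's criterion, encode the discrete invariants as ranks of matrices depending holomorphically on $\zeta$, and take the non-locally-constant loci of these lower semicontinuous rank functions --- is the natural one, and, with the bookkeeping you describe, it does establish that the non-Jordan-stable set is \emph{closed} (its complement is open directly from Definition \ref{8.6.16n'}) and \emph{contained in} a nowhere dense analytic subset of $X$. In the 1-dimensional case that already gives discreteness and closedness, which is all the present paper actually uses.

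The genuine gap is in the sentence ``Conversely it clearly is analytic \dots one spells this out.'' The assertion that the bad set \emph{is} an analytic subset, rather than merely contained in one, is exactly what separates \cite[Theorem 5.5]{Le1} from Baumg\"artel's earlier weaker statement \cite{B3}, \cite{B4} --- the paper's own commentary after the proposition makes this distinction explicit --- so it cannot be waved through. Concretely, two things are missing. First, the non-locally-constant locus of a single rank function $\zeta\mapsto\rank R_\alpha(A(\zeta))$ is, as you describe it, a union of locally closed strata $\{r_\alpha=j\}\setminus\mathrm{int}\{r_\alpha\le j\}$, i.e.\ a priori only constructible; one can show it is analytic (it coincides with the locally finite union of the analytic sets $\{r_\alpha\le j\}\cap\overline{X\setminus\{r_\alpha\le j\}}$), but this needs the small argument about interiors of analytic subsets of a possibly reducible $X$, which you do not give. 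Second, and more seriously, the rank functions controlling the Jordan block counts are, in the route you actually fall back on, only defined on the open dense set where the eigenvalue count is locally constant and the $\lambda_j$ are holomorphic; their bad loci are therefore analytic only in that open set, and concluding analyticity in all of $X$ (in particular at the points where eigenvalues collide) requires either an extension argument or a globally defined holomorphic substitute. The ``Jordan invariant matrices $N_k(M)$'' you invoke for the latter are not constructed, and the obvious candidates (determinantal divisors of $\lambda I-M$ via generalized resultants) do not by themselves capture the individual block counts in Thijsse's criterion. You have correctly identified this as the main obstacle, but identifying it is not the same as overcoming it: as written, the proposal reproves Baumg\"artel's containment statement, not the proposition as stated.
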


This proposition can be found in \cite[Theorem 5.5]{Le1}. If $X$ is 1-dimensional and smooth, it was first proved by
H. Baumg\"artel \cite{B1}, \cite[Kap. 5, \S 7]{B2}, \cite[5.7]{B4}. If $X$ is of arbitrary dimension and smooth, H. Baumg\"artel \cite{B3}, \cite[S 3.4]{B4} obtained the slightly weaker statement that the points in $X$ which are not Jordan stable for $A$ are \underline{contained} in a nowhere dense analytic   subset of $X$.

\begin{thm}\label{26.1.16'}  Let  $\xi\in X$ be Jordan stable for $A$. Then there exist a neighborhood $U$ of $\xi$ and a holomorphic map $H:U\to GL(n,\C)$ such that (cp. Def. \ref{26.9.17n})
\begin{equation}\label{27.1.16--}
H(\zeta)^{-1}\big(\mathrm{Com\,}A(\zeta)\big)H(\zeta)=\mathrm{Com\,}A(\xi)\quad\text{for all}\quad \zeta\in U.
\end{equation}
\end{thm}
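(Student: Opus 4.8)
The hypothesis gives a neighborhood $U_0$ of $\xi$ satisfying conditions (a) and (b) of Definition \ref{8.6.16n'}. Shrinking to $U_0$ and conjugating by the map $T$ from condition (b), we may assume that $A(\zeta)$ is in Jordan normal form for every $\zeta\in U_0$, with the eigenvalues given by holomorphic functions $\lambda_1,\dots,\lambda_m$ on $U_0$ that are the \emph{distinct} eigenvalues of $A(\zeta)$ at each point. The point is that, after this normalization, the sizes of the Jordan blocks of $A(\zeta)$ are locally constant: the number of distinct eigenvalues is constant on $U_0$, and, by the Thijsse criterion quoted in the footnote to Definition \ref{8.6.16n'}, the number of Jordan blocks of each size attached to each $\lambda_k$ is constant near $\xi$ as well. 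So, after a constant permutation of the blocks (which does not affect holomorphy) and a further shrinking of $U_0$, the matrix $A(\zeta)$ has, for all $\zeta$ in a neighborhood $U$ of $\xi$, a fixed block-diagonal shape in which the $(i,j)$ entry is either identically $0$, identically $1$ (the super-diagonal $1$'s of the Jordan blocks), or one of the holomorphic eigenvalue functions $\lambda_k(\zeta)$ sitting on a diagonal position.

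\textbf{Key step: making the commutant constant.} With $A$ in this fixed Jordan shape on $U$, one computes $\mathrm{Com\,}A(\zeta)$ explicitly. The classical description of the commutant of a matrix in Jordan form says: $\Theta\in\mathrm{Com\,}A(\zeta)$ if and only if, with respect to the decomposition into the generalized eigenspaces of the $\lambda_k(\zeta)$, $\Theta$ is block diagonal, and each diagonal block (corresponding to a fixed eigenvalue, with its collection of Jordan blocks of various sizes) is an upper-triangular Toeplitz pattern of rectangular blocks. Crucially, once the Jordan block structure (the partition of $n$ into block sizes, together with their grouping by eigenvalue) is \emph{fixed}, this linear subspace of $\mathrm{Mat}(n\times n,\C)$ is \emph{the same} regardless of the numerical values $\lambda_1(\zeta),\dots,\lambda_m(\zeta)$ — the only thing that matters is which $\lambda_k$'s coincide, and by condition (a) the $\lambda_k(\zeta)$ are pairwise distinct for \emph{every} $\zeta\in U$ (shrinking once more if necessary to ensure the $\lambda_k$ stay distinct). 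Hence after the above normalization we already have $\mathrm{Com\,}A(\zeta)=\mathrm{Com\,}A(\xi)$ for all $\zeta\in U$, i.e. the conclusion \eqref{27.1.16--} holds with $H$ equal to the composition of the identity on this final chart with the conjugating map we used to put $A$ into Jordan form.

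\textbf{Assembling $H$.} Concretely: let $T_0:U_0\to\mathrm{GL}(n,\C)$ be the holomorphic map from Definition \ref{8.6.16n'}(b), so $T_0(\zeta)^{-1}A(\zeta)T_0(\zeta)$ is in Jordan form. By \eqref{26.8.16+}, $\mathrm{Com\,}A(\zeta)=T_0(\zeta)\big(\mathrm{Com\,}(T_0(\zeta)^{-1}A(\zeta)T_0(\zeta))\big)T_0(\zeta)^{-1}$. So it suffices to handle the matrix $\widetilde A(\zeta):=T_0(\zeta)^{-1}A(\zeta)T_0(\zeta)$ and produce a holomorphic $S:U\to\mathrm{GL}(n,\C)$ (a constant permutation-type matrix, possibly composed with a constant scaling, chosen so the blocks line up in a $\zeta$-independent order) with $S^{-1}\big(\mathrm{Com\,}\widetilde A(\zeta)\big)S=\mathrm{Com\,}A(\xi)$; then $H(\zeta):=T_0(\zeta)S$ works, again by \eqref{26.8.16+} and the fact that conjugating the constant subspace $\mathrm{Com\,}\widetilde A(\zeta)=\mathrm{Com\,}\widetilde A(\xi)$ by the constant $S$ lands us on $\mathrm{Com\,}A(\xi)=S^{-1}\mathrm{Com\,}\widetilde A(\xi)S\cdot$ wait — more precisely one arranges $S$ so that $S^{-1}\mathrm{Com\,}\widetilde A(\xi)S=\mathrm{Com\,}A(\xi)$, which holds because $S^{-1}\widetilde A(\xi)S=A(\xi)$ is the Jordan form we arranged. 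The main obstacle, and the only genuinely nontrivial input, is the local constancy of the Jordan block structure under holomorphic perturbation — but this is exactly what Jordan stability encodes (Definition \ref{8.6.16n'} and the Thijsse criterion in its footnote), so with that in hand the proof is a matter of unwinding the explicit form of the commutant of a Jordan matrix and checking it depends only on the combinatorial block data, not on the eigenvalues themselves.
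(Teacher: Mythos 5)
Your proposal is correct and follows essentially the same route as the paper: conjugate by the map $T$ from Definition \ref{8.6.16n'}(b) to reduce to a Jordan-form matrix $J(\zeta)$ with constant block structure, observe via the classical description of the commutant (and the pairwise distinctness of the $\lambda_k(\zeta)$) that $\mathrm{Com\,}J(\zeta)=\mathrm{Com\,}J(\xi)$ is independent of $\zeta$, and transport back with \eqref{26.8.16+}. The paper simply takes $H(\zeta)=T(\zeta)T(\xi)^{-1}$, which is what your choice $H(\zeta)=T_0(\zeta)S$ amounts to once $S$ is chosen so that $S^{-1}\widetilde A(\xi)S=A(\xi)$.
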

\begin{proof}
Let $U$, $\lambda_j$, $1\le j\le m$, and $T$ be as in Definition \ref{8.6.16n'}. Set $J(\zeta)=T(\zeta)^{-1}A(\zeta)T(\zeta)$ and $H(\zeta)=T(\zeta)T(\xi)^{-1}$ for $\zeta\in U$. We may assume that $U$ is conneceted. Since $J$ is continuous and, for each $\zeta\in U$, $\lambda_1(\zeta), \ldots, \lambda_m(\zeta)$ are the different eigenvalues of $J(\zeta)$, and since $J(\zeta)$ is in Jordan normal form, then, after a possible change of the numbering, there are integers $n_1,\ldots,n_m\ge 1$ and matrices $M_j\in \mathrm{Mat}(n_j\times n_j,\C)$, $1\le j\le m$, in Jordan normal form and with the only eigenvalue $0$, such that, for each $\zeta\in U$, $J(\zeta)$ is the block diagonal matrix with the diagonal $\lambda_1(\zeta)I_{n_1}+M_1,\ldots,\lambda_m(\zeta) I_{n_m}+M_m$.

Since the eigenvalues $\lambda_1(\zeta),\ldots,\lambda_m(\zeta)$ are pairwise different, then \cite[Ch. VIII, \S 1]{Ga},  a matrix $\Theta\in\mathrm{Mat}(n\times n,\C)$ belongs to $\mathrm{Com\,} J(\zeta)$ if and only if $\Theta$ is a block diagonal matrix with matrices $Z_j\in \mathrm{Com\,}\big(\lambda_j(\zeta)I_{n_j}+M_{j}\big)$, $1\le j\le m$,
on the diagonal.  Obviously, $\mathrm{Com\,}\big(\lambda_j(\zeta)I_{k_j}+M_{j}\big)=\mathrm{Com\,}\big(\lambda_j(\xi)I_{k_j}+M_{j}\big)$ for all $\zeta\in U$. Therefore,  $\mathrm{Com\,}J(\zeta)=\mathrm{Com\,}J(\xi)$ for all $\zeta\in U$.
 \eqref{27.1.16--} now follows by \eqref{26.8.16+}.
\end{proof}

\begin{rem}\label{27.9.17*} Let $Z$ be the set of all points in $X$ which are not Jordan stable for $A$ (which is, by Proposition \ref{11.6.16}, a nowhere dense analytic subset of $X$). Assume that $X\setminus Z$ is connected, fix some point $\xi\in X$, and denote  by $N$ the normalizer of $\mathrm{GCom\,}A(\xi)$ in $\mathrm{GL}(n,\C)$, i.e., the complex Lie group of all $\Phi\in\mathrm{GL}(n,\C)$ with
$\Phi^{-1}\big(\mathrm{GCom\,}A(\xi)\big)\Phi=\mathrm{GCom\,}A(\xi)$. Then Theorem \ref{26.1.16'} implies that the family
\[
\big\{\mathrm{GCom\,}A(\zeta)\big\}_{\zeta \in X\setminus Z}
\]
is a holomorphic $N$-principal bundle of complex Lie groups, with the characteristic fiber $\mathrm{GCom\,}A(\xi)$. Note that  $\mathrm{GCom\,}A(\xi)$ is connected (as easy to see -- cp. \cite[Lemma 4.2]{Le2}), whereas $N$ need not be connected (cp. Remark \ref{17.3.16'} below).
\end{rem}

Theorem \ref{26.1.16'} immediately implies
\begin{cor}\label{26.7.17m} For each open set $W\subseteq X$ which contains only Jordan stable points of $A$, we have $\widehat{\mathcal O}^{\mathrm{Com\,}A}(W)={\mathcal C}^{\mathrm{Com\,}A}(W)$.
\end{cor}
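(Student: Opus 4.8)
The plan is to prove the only nontrivial inclusion, namely $\mathcal C^{\mathrm{Com\,}A}(W)\subseteq\widehat{\mathcal O}^{\mathrm{Com\,}A}(W)$; the reverse inclusion holds because $\widehat{\mathcal O}^{\mathrm{Com\,}A}(W)$ is by Definition \ref{26.9.17n} a subalgebra of $\mathcal C^{\mathrm{Com\,}A}(W)$. So let $f\in\mathcal C^{\mathrm{Com\,}A}(W)$ and let $\xi\in W$ be arbitrary. Since $W$ contains only Jordan stable points of $A$, the point $\xi$ is Jordan stable for $A$, and it suffices to verify condition \eqref{19.8.16} at $\xi$: that is, to produce a neighborhood $V_\xi$ of $\xi$ and some $h_\xi\in\mathcal O^{\mathrm{Com\,}A}(V_\xi)$ with $h_\xi(\xi)=f(\xi)$.

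First I would invoke Theorem \ref{26.1.16'}: since $\xi$ is Jordan stable for $A$, there are a neighborhood $U\subseteq X$ of $\xi$ and a holomorphic $H:U\to\mathrm{GL}(n,\C)$ with $H(\zeta)^{-1}\big(\mathrm{Com\,}A(\zeta)\big)H(\zeta)=\mathrm{Com\,}A(\xi)$ for all $\zeta\in U$, equivalently $\mathrm{Com\,}A(\zeta)=H(\zeta)\big(\mathrm{Com\,}A(\xi)\big)H(\zeta)^{-1}$ for all $\zeta\in U$. I then set $V_\xi:=U\cap W$ (an open neighborhood of $\xi$ contained in $W$), put $C:=H(\xi)^{-1}f(\xi)H(\xi)$, and define $h_\xi(\zeta):=H(\zeta)\,C\,H(\zeta)^{-1}$ for $\zeta\in V_\xi$.

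It then remains to check three routine points. First, $h_\xi$ is holomorphic on $V_\xi$, because $H$ and $\zeta\mapsto H(\zeta)^{-1}$ are holomorphic and $C$ is a constant matrix. Second, $h_\xi$ is a section of $\mathrm{Com\,}A$ over $V_\xi$: specializing the conjugation identity of Theorem \ref{26.1.16'} to $\zeta=\xi$ and using $f(\xi)\in\mathrm{Com\,}A(\xi)$ gives $C\in H(\xi)^{-1}\big(\mathrm{Com\,}A(\xi)\big)H(\xi)=\mathrm{Com\,}A(\xi)$, hence $h_\xi(\zeta)\in H(\zeta)\big(\mathrm{Com\,}A(\xi)\big)H(\zeta)^{-1}=\mathrm{Com\,}A(\zeta)$ for every $\zeta\in V_\xi$. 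Third, $h_\xi(\xi)=H(\xi)\,H(\xi)^{-1}f(\xi)H(\xi)\,H(\xi)^{-1}=f(\xi)$. Thus $h_\xi\in\mathcal O^{\mathrm{Com\,}A}(V_\xi)$ witnesses \eqref{19.8.16} at $\xi$; as $\xi\in W$ was arbitrary, $f\in\widehat{\mathcal O}^{\mathrm{Com\,}A}(W)$, and the corollary follows.

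I do not expect a serious obstacle: the statement is essentially an immediate corollary of Theorem \ref{26.1.16'}, as already announced in the text. The only point deserving a line of care is the purely set-theoretic verification that the constant matrix $C=H(\xi)^{-1}f(\xi)H(\xi)$ actually lies in $\mathrm{Com\,}A(\xi)$ — so that its $H(\zeta)$-conjugates fall into $\mathrm{Com\,}A(\zeta)$ — which is obtained simply by reading the conjugation identity of Theorem \ref{26.1.16'} at the base point $\zeta=\xi$.
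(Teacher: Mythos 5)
Your proof is correct and is exactly the intended argument: the paper states this corollary as an immediate consequence of Theorem \ref{26.1.16'} without writing out details, and your construction $h_\xi(\zeta)=H(\zeta)\,H(\xi)^{-1}f(\xi)H(\xi)\,H(\zeta)^{-1}$ is precisely the way to spell out that implication, including the one point that needs checking (that $H(\xi)^{-1}f(\xi)H(\xi)\in\mathrm{Com\,}A(\xi)$, read off from the conjugation identity at $\zeta=\xi$).
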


\begin{thm}\label{23.7.16}Let $W\subseteq X$ be an open set which contains only Jordan stable points of $A$. Further suppose that $W$ is contractible. Fix some point $\xi\in W$. Then there exists a continuous map
$T:W\to \mathrm{GL}(n,\C)$  such that
\begin{equation}\label{27.1.16--neu}
T^{-1}(\zeta)\big(\mathrm{Com\,}A(\zeta)\big)T(\zeta)=\mathrm{Com\,}A(\xi)\quad\text{for all}\quad \zeta\in W.
\end{equation} If, moreover, $W$ is Stein, then this $T$ can be chosen holomorphically on $W$.
\end{thm}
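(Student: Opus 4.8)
The plan is to cover $W$ by a locally finite family of small open sets $V_\alpha$ on each of which Theorem \ref{26.1.16'} applies, giving a holomorphic $H_\alpha:V_\alpha\to\mathrm{GL}(n,\C)$ with $H_\alpha(\zeta)^{-1}\bigl(\mathrm{Com\,}A(\zeta)\bigr)H_\alpha(\zeta)=\mathrm{Com\,}A(\xi_\alpha)$ for a base point $\xi_\alpha\in V_\alpha$. Fixing once and for all, for each $\alpha$, an element $C_\alpha\in\mathrm{GL}(n,\C)$ conjugating $\mathrm{Com\,}A(\xi_\alpha)$ to $\mathrm{Com\,}A(\xi)$ (possible because $W$ is connected and all its points are Jordan stable, so all the algebras $\mathrm{Com\,}A(\zeta)$, $\zeta\in W$, are conjugate in $\mathrm{GL}(n,\C)$ — this follows by chaining Theorem \ref{26.1.16'} along a path), I replace $H_\alpha$ by $H_\alpha C_\alpha^{-1}$ and so may assume $H_\alpha(\zeta)^{-1}\bigl(\mathrm{Com\,}A(\zeta)\bigr)H_\alpha(\zeta)=\mathrm{Com\,}A(\xi)$ for all $\zeta\in V_\alpha$ and all $\alpha$. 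On overlaps $V_\alpha\cap V_\beta$ the matrix $g_{\alpha\beta}:=H_\alpha^{-1}H_\beta$ then normalizes $\mathrm{Com\,}A(\xi)$, i.e.\ $g_{\alpha\beta}$ is a holomorphic section of the sheaf of $N$-valued maps, where $N$ is the normalizer of $\mathrm{GCom\,}A(\xi)$ in $\mathrm{GL}(n,\C)$ (equivalently, of the algebra $\mathrm{Com\,}A(\xi)$); and $\{g_{\alpha\beta}\}$ is a cocycle.

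The key point is then that this $N$-valued (in the holomorphic, resp.\ continuous, category) cocycle splits on the contractible set $W$. In the continuous case: $W$ is contractible, hence every $G$-cocycle for any topological group $G$ — in particular $G=N$ — splits (the classifying space argument, or simply: $H^1(W,\mathcal C^N)=0$ for contractible $W$); alternatively one can refine to a good cover and use that continuous cocycles with values in any Lie group are trivial over a contractible base. So there are continuous $\gamma_\alpha:V_\alpha\to N$ with $g_{\alpha\beta}=\gamma_\alpha^{-1}\gamma_\beta$, and then $T:=H_\alpha\gamma_\alpha^{-1}$ is well-defined and continuous on all of $W$; since $\gamma_\alpha(\zeta)\in N$ conjugates $\mathrm{Com\,}A(\xi)$ to itself, $T(\zeta)^{-1}\bigl(\mathrm{Com\,}A(\zeta)\bigr)T(\zeta)=\gamma_\alpha(\zeta)\,\mathrm{Com\,}A(\xi)\,\gamma_\alpha(\zeta)^{-1}=\mathrm{Com\,}A(\xi)$, which is \eqref{27.1.16--neu}. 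In the holomorphic case, when $W$ is Stein, I invoke Grauert's Oka principle: since $W$ is contractible the cocycle $\{g_{\alpha\beta}\}$ is continuously trivial, hence by Grauert (\cite[Satz I]{Gr}, or \cite[Theorem 5.3.1]{Fc}) it is holomorphically trivial over the Stein space $W$, giving holomorphic $\gamma_\alpha:V_\alpha\to N$ and the same formula $T=H_\alpha\gamma_\alpha^{-1}$ produces the desired holomorphic $T$.

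The main obstacle I expect is the bookkeeping needed to make the $C_\alpha$'s coherent enough that $g_{\alpha\beta}$ actually lands in the fixed group $N$ rather than merely conjugating one algebra to another one isomorphic to it. This is really where connectedness of $W$ is used: one must check that the algebras $\mathrm{Com\,}A(\zeta)$ form a locally constant (in fact, by Theorem \ref{26.1.16'}, locally holomorphically trivial) family of subalgebras of $\mathrm{Mat}(n\times n,\C)$, all $\mathrm{GL}(n,\C)$-conjugate, so that after the adjustment every transition function normalizes the single algebra $\mathrm{Com\,}A(\xi)$. Once that is in place, the descent to a global $T$ is the standard splitting-of-a-cocycle argument, trivial in the continuous category over a contractible base and reduced to Grauert's theorem in the holomorphic Stein category. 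A minor technical point is that $N$ is a (possibly disconnected) complex Lie group, which is harmless for both the contractibility argument and Grauert's Oka principle.
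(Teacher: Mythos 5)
Your proposal is correct and follows essentially the same route as the paper: local holomorphic conjugations to the fixed algebra $\mathrm{Com\,}A(\xi)$ via Theorem \ref{26.1.16'} and connectedness, transition functions forming a cocycle with values in the normalizer $N$, continuous splitting over the contractible set $W$ (the paper cites Steenrod), and Grauert's Oka principle for the Stein/holomorphic case. Your explicit insertion of the constants $C_\alpha$ just spells out what the paper leaves implicit in the phrase ``since $W$ is connected.''
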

\begin{proof} Since $W$ is connected, by Theorem \ref{26.1.16'}, for each $\eta\in W$, we can find a neighborhood $U_\eta\subseteq W$ of $\eta$
and a holomorphic map $H_\eta:U_\eta\to GL(n,\C)$ such that
\begin{equation}\label{27.8.16}H_\eta^{-1}(\zeta)\big(\mathrm{Com\,}A(\zeta)\big)H_\eta^{}(\zeta)=\mathrm{Com\,}A(\xi)\quad\text{for all}\quad\zeta\in U_\eta.
\end{equation}
Let $N$ the normalizer of $\mathrm{GCom\,}A(\xi)$ in $\mathrm{GL}(n,\C)$, i.e.,  the complex Lie group of all $\Phi\in\mathrm{GL}(n,\C)$ with
$\Phi^{-1}\big(\mathrm{GCom\,}A(\xi)\big)\Phi=\mathrm{GCom\,}A(\xi)$.
Then, by \eqref{27.8.16},
\begin{equation*}
H^{-1}_\eta(\zeta)H^{}_\tau(\zeta)\in N\quad\text{for all}\quad \zeta\in U_\eta\cap U_\tau\quad\text{and all}\quad\eta,\tau\in W,
\end{equation*} i.e., the family $\{H^{-1}_\eta H^{}_\tau\}_{\eta\tau}$ is an $\Cal O^N$-cocycle with the covering $\{U_\eta\}_{\eta\in W}$. Since $W$ is contractible, this cocycle splits as a $\Cal C^N$-cocycle \cite[Corollary 11.6]{St},
i.e., there is a family $T_\eta\in \Cal C^N(U_\eta)$ such that $T^{-1}_\eta T^{}_\tau=H_\eta^{}H_\tau^{-1}$ on $U_\eta\cap U_\tau$. Then
\[
H_\eta^{}T^{}_\eta=H_\tau^{}T^{}_\tau\quad\text{on}\quad U_\eta\cap U_\tau,\quad \eta,\tau\in W.
\]Hence, there is a well-defined continuous map $T:X\to \mathrm{GL}(n,\C)$ with
\[
T=H_\eta^{}T^{}_\eta\quad\text{on}\quad U_\eta,\quad \eta\in W.
\]By \eqref{27.8.16} and since $T_\eta(\zeta)\in N$, this $T$ satisfies \eqref{27.1.16--neu}.
If $W$ is Stein, by Grauert's Oka principle \cite[Satz 6]{Gr} (see also \cite[Theorem 8.2.1 (i)]{Fc}), the maps $T_\eta$ can be chosen holomorphically, which implies that  $T$ is holomorphic.
\end{proof}
\begin{cor}\label{9.8.16+}Under the hypotheses of Theorem \ref{23.7.16},   $\widehat{\Cal O}^{\mathrm{GCom\,} A}(W)$, endowed with   the topology of uniform convergence on the compact subsets of $W$, is connected.
\end{cor}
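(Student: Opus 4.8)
The plan is to deduce the connectedness from the explicit description of $\widehat{\Cal O}^{\mathrm{GCom\,}A}(W)$ supplied by Theorem \ref{23.7.16}, together with the fact that $\mathrm{GCom\,}A(\xi)$ is a connected complex Lie group. By Corollary \ref{26.7.17m}, $\widehat{\Cal O}^{\mathrm{Com\,}A}(W)=\Cal C^{\mathrm{Com\,}A}(W)$, so that $\widehat{\Cal O}^{\mathrm{GCom\,}A}(W)=\Cal C^{\mathrm{GL}(n,\C)}(W)\cap \Cal C^{\mathrm{Com\,}A}(W)$, i.e., it is the set of all continuous maps $F:W\to\mathrm{GL}(n,\C)$ with $F(\zeta)\in\mathrm{GCom\,}A(\zeta)$ for every $\zeta\in W$ (in the holomorphic case one would work with the holomorphic such maps, but the topology is the same and connectedness of the smaller set follows from the argument below applied with holomorphic $T_\eta$).

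First I would fix the map $T:W\to\mathrm{GL}(n,\C)$ produced by Theorem \ref{23.7.16}, which satisfies $T^{-1}(\zeta)\big(\mathrm{Com\,}A(\zeta)\big)T(\zeta)=\mathrm{Com\,}A(\xi)$ for all $\zeta\in W$ (and can be taken holomorphic when $W$ is Stein). Then the assignment $F\mapsto T F T^{-1}$, more precisely $F(\zeta)\mapsto T(\zeta)F(\zeta)T(\zeta)^{-1}$, is a homeomorphism (for the topology of uniform convergence on compacta, since $T$ is bounded on compacta together with $T^{-1}$) from $\widehat{\Cal O}^{\mathrm{GCom\,}A}(W)$ onto the space $\Cal C^{\mathrm{GCom\,}A(\xi)}(W)$ of continuous maps $W\to\mathrm{GCom\,}A(\xi)$ (respectively the holomorphic ones, in the Stein case, using \eqref{17.8.16+}). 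So it suffices to prove that $\Cal C^{G_\xi}(W)$ is connected, where $G_\xi:=\mathrm{GCom\,}A(\xi)$, and likewise for $\Cal O^{G_\xi}(W)$ when $W$ is Stein.

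For the connectedness of $\Cal C^{G_\xi}(W)$: $G_\xi$ is a connected complex Lie group (as noted in Remark \ref{27.9.17*}, cp. \cite[Lemma 4.2]{Le2}), hence path-connected, and $W$ is contractible. Given any $g\in\Cal C^{G_\xi}(W)$, I would pick a contraction $r_t:W\to W$ with $r_0=\id_W$, $r_1\equiv \xi$ (shrinking $W$ or using the contractibility hypothesis directly), and join the value $g\circ r_1\equiv g(\xi)$ to the constant map $I$ by a path in $G_\xi$ (possible since $G_\xi$ is connected, hence path-connected as a Lie group); concatenating $t\mapsto g\circ r_t$ with this path gives a path in $\Cal C^{G_\xi}(W)$ from $g$ to the constant map $I$, all continuous for the compact-open topology. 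Hence $\Cal C^{G_\xi}(W)$ is path-connected, in particular connected. For the Stein case one must instead argue that $\Cal O^{G_\xi}(W)$ is connected; here I would invoke Grauert's Oka principle (as already used in the proof of Theorem \ref{23.7.16}), which gives that the inclusion $\Cal O^{G_\xi}(W)\hookrightarrow \Cal C^{G_\xi}(W)$ induces a bijection on connected components (indeed on $\pi_0$ and $\pi_1$), so that $\Cal O^{G_\xi}(W)$ has exactly one component as well. The main obstacle is making the "change of variable by $T$" step precise when $W$ is only Stein and not just contractible — one must be sure that conjugation by the holomorphic $T$ really carries $\widehat{\Cal O}^{\mathrm{GCom\,}A}(W)$ onto $\Cal O^{G_\xi}(W)$ and is a homeomorphism for the stated topology; this is routine but is where the care goes.
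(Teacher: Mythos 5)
Your proposal is correct and follows essentially the same route as the paper: Corollary \ref{26.7.17m} identifies $\widehat{\Cal O}^{\mathrm{GCom\,}A}(W)$ with $\Cal C^{\mathrm{GCom\,}A}(W)$, conjugation by the map $T$ of Theorem \ref{23.7.16} gives an isomorphism of topological groups onto $\Cal C^{G_\xi}(W)$ with $G_\xi=\mathrm{GCom\,}A(\xi)$, and connectedness of the latter follows from the contractibility of $W$ together with the connectedness of $G_\xi$. The only superfluous part is your separate treatment of a ``Stein/holomorphic case'': since $\widehat{\Cal O}^{\mathrm{GCom\,}A}(W)$ coincides with the full group of continuous sections on a set of Jordan stable points, no appeal to Grauert's Oka principle or to $\Cal O^{G_\xi}(W)$ is needed.
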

\begin{proof}  Let  $G:=\mathrm{GCom\,} A(\xi)$, and let $\Cal C^{G}(W)$ be also endowed with the topology of uniform convergence on the compact subsets of $W$.
 Since $W$ is contractible, each element of $\Cal C^{G}(W)$ can be connected by a continuous path in $\Cal C^{G}(W)$ with a constant map. It is easy to see that $G$ is connected \cite[Lemma 4.2]{Le2}. Therefore, this implies  that $\Cal C^{G}(W)$ is connected. Since, by \eqref{27.1.16--},  $\Cal C^{\mathrm{GCom\,} A}(W)$ and $\Cal C^{G}(W)$ are isomorphic as topological groups, it follows that $\Cal C^{\mathrm{GCom\,} A}(W)$ is connected. Since $\widehat{\Cal O}^{\mathrm{GCom\,} A}(W)={\Cal C}^{\mathrm{GCom\,} A}(W)$ (Corollary \ref{26.7.17m}), this completes the proof.
\end{proof}

 \begin{rem}\label{17.3.16'} Assume that $X$ is a domain in the complex plane and all points of $X$ are Jordan stable for $A$. Let $\xi\in X$. If, moreover, $X$ is simply connected (and, hence, contractible), then, by Theorem \ref{23.7.16}, we can find a holomorphic map
$T:X\to \mathrm{GL}(n,\C)$  such that $T^{-1}(\zeta)\big(\mathrm{Com\,}A(\zeta)\big)T(\zeta)=\mathrm{Com\,}A(\xi)$ for all $\zeta\in X$.

Question: Is this true also if $X$ is not  simply connected?

If the normalizer, $N$, of $\mathrm{GCom\,}A(\xi)$ in $\mathrm{GL}(n,\C)$, which appears in the proof of Theorem \ref{23.7.16}, is connected, this is the case. This follows by the same proof, using \cite[Satz 7]{Gr} (see also, \cite[Theorem 8.2.1 (iii)]{Fc}) (saying that $H^1(Y,\Cal O^{G})=0$ for each connected complex Lie group $G$ and each noncompact connected Riemann surface $Y$).
However, this is not always the case.

For example, assume that $A(\xi)=\begin{pmatrix}1&0\\0&0\end{pmatrix}$. Then
\begin{equation}\label{12.6.16n'}
N=\bigg\{\begin{pmatrix}a&0\\0&d\end{pmatrix}\;\bigg\vert\;a,d\in\C\setminus\{0\}\bigg\}\cup \bigg\{\begin{pmatrix}0&b\\c&0\end{pmatrix}\;\bigg\vert\;b,c\in\C\setminus\{0\}\bigg\},
\end{equation} which
is not connected.

{\em Proof of} \eqref{12.6.16n'}. It is easy to see  that
\begin{equation}\label{12.6.16n+}
\mathrm{GCom\,}\begin{pmatrix}1&0\\0&0\end{pmatrix}=\bigg\{\begin{pmatrix}a&0\\0&d\end{pmatrix}\;\bigg\vert\;a,d\in \C\setminus\{0\}\bigg\}.
\end{equation}
To
prove ``$\supseteq $'' in \eqref{12.6.16n'},  we therefore only have to prove that, for all $b,c\in\C\setminus\{0\}$,
\begin{equation}\label{12.6.16n-}\begin{pmatrix}0&b\\c&0\end{pmatrix}\in N.
\end{equation}Let $b,c\in \C\setminus\{0\}$ be given. Then, for all $\alpha,\delta\in \C\setminus\{0\}$,
\[
\begin{pmatrix}0&b\\c&0\end{pmatrix}^{-1}\begin{pmatrix}\alpha&0\\0&\delta\end{pmatrix}
\begin{pmatrix}0&b\\c&0\end{pmatrix}=
\begin{pmatrix}0&c^{-1}\\b^{-1}&0\end{pmatrix}\begin{pmatrix}0&\alpha b\\\delta c&0\end{pmatrix}=\begin{pmatrix}\delta&0\\0&\alpha\end{pmatrix}.
\]
In view of \eqref{12.6.16n+}, this  implies that
\[\begin{pmatrix}0&b\\c&0\end{pmatrix}^{-1}\mathrm{GCom}\begin{pmatrix}1&0\\0&0\end{pmatrix}
\begin{pmatrix}0&b\\c&0\end{pmatrix}= \mathrm{GCom}\begin{pmatrix}1&0\\0&0\end{pmatrix},
\]
 which means \eqref{12.6.16n-}, by definition of $N$.

To prove ``$\subseteq $'' in \eqref{12.6.16n'}, let
$\begin{pmatrix}a&b\\c&d\end{pmatrix}\in N$ be given. We have to prove that
\begin{equation}\label{27.9.17'}
\text{either}\qquad b=c=0,\qquad\text{or}\qquad a=d=0.
\end{equation}
By definition of $N$ and by \eqref{12.6.16n+}, we can find $\alpha, \delta\in \C\setminus \{0\}$ with
\[\begin{pmatrix}a&b\\c&d\end{pmatrix}^{-1}\begin{pmatrix}2&0\\0&1\end{pmatrix}\begin{pmatrix}a&b\\c&d\end{pmatrix}
=\begin{pmatrix}\alpha&0\\0&\delta\end{pmatrix},\]  which implies
\begin{equation}\label{27.9.17}\begin{pmatrix}a&b\\c&d\end{pmatrix}^{-1}\begin{pmatrix}1&0\\0&0\end{pmatrix}\begin{pmatrix}a&b\\c&d\end{pmatrix}
=\begin{pmatrix}1+\alpha&0\\0&1+\delta\end{pmatrix}
\end{equation}and, hence,
\begin{equation}\label{12.6.16n+++}\begin{pmatrix}a&b\\0&0\end{pmatrix}=\begin{pmatrix}a(1+\alpha) &b(1+\delta)\\c(1+\alpha) &d(1+\delta) \end{pmatrix}.
\end{equation}
By \eqref{27.9.17}, $\begin{pmatrix}1+\alpha &0\\0&1+\delta\end{pmatrix}$ is of rank $1$. Therefore
\[ \text{either}\quad 1+\alpha\not=0\quad\text{and}\quad 1+\delta=0,\qquad\text{or}\qquad 1+\alpha=0\quad\text{and}\quad 1+\delta\not=0.
\] By \eqref{12.6.16n+++}, this proves \eqref{27.9.17'}. \qed
\end{rem}

\section{Proof of Theorem \ref{18.10.16} }\label{26.7.17}

In this section, we prove Theorem \ref{18.10.16}. From now on, $X$ is a 1-dimensional Stein space, and $A:X\to \mathrm{Mat\,}(n\times n,\C)$ is a holomorphic map. 

By Corollary \ref{4.10.17*}, it is sufficient to prove the following

\begin{thm}\label{27.9.17''} $H^1(X,\widehat{\mathcal O}^{\mathrm{GCom\,}A})=0$.
\end{thm}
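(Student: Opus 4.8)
The plan is to reduce the vanishing of $H^1(X,\widehat{\mathcal O}^{\mathrm{GCom\,}A})$ to a statement about splitting cocycles over the compact pieces produced by the bumps machinery of Sections~4--5, and then to run an induction along a bump-type exhaustion. First I would fix a cocycle $f=\{f_{ij}\}\in Z^1(\mathcal U,\widehat{\mathcal O}^{\mathrm{GCom\,}A})$ and, after refining $\mathcal U$, arrange that every set of $\mathcal U$ is small enough to be useful. Let $Z\subseteq X$ be the (discrete, closed) set of points that are not Jordan stable for $A$ (Proposition~\ref{11.6.16}); then $X\setminus Z$ consists of smooth points, and on any contractible open $W\subseteq X\setminus Z$ we have by Corollary~\ref{26.7.17m} that $\widehat{\mathcal O}^{\mathrm{GCom\,}A}(W)=\mathcal C^{\mathrm{GCom\,}A}(W)$, with this section group connected by Corollary~\ref{9.8.16+}. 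This connectedness, together with Theorem~\ref{23.7.16} which trivializes the family $\mathrm{GCom\,}A$ on contractible $W$, is what will let me solve the relevant local splitting problems.

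Next I would set up the exhaustion. Let $\pi:\widetilde X\to X$ be the normalization; $\widetilde X$ is a (possibly disconnected) noncompact Riemann surface, and on each of its connected components Theorem~\ref{5.8.16} gives a sequence of bumps $(B_\mu)$ that exhausts it, each $B_\mu$ subordinate to the pullback covering $\pi^{-1}(\mathcal U)$ (after suitable refinement), with the local-finiteness property~(d). Pushing these down via Lemma~\ref{1.9.17}, I get a sequence of $Z$-adapted pairs $(\Gamma_1^{(\mu)},\Gamma_2^{(\mu)})$ in $X$ whose unions exhaust $X$, each $\Gamma_2^{(\mu)}$ contained in a member of $\mathcal U$, and each attached to the previous compactum $K_{\mu-1}:=\Gamma_1^{(0)}\cup\cdots$ along a set of the form $\Gamma^{\mathrm o}\cup\Gamma$, where $\Gamma^{\mathrm o}\subseteq Z$ is finite and $\Gamma$ is a finite union of compacta with contractible neighborhood bases lying in $X\setminus Z$. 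The core of the proof is then the inductive step: given that $f$ splits over a neighborhood of $K_{\mu-1}$, show it splits over a neighborhood of $K_\mu=K_{\mu-1}\cup\Gamma_2^{(\mu)}$. This is a Cartan-type patching lemma for $\widehat{\mathcal O}^{\mathrm{GCom\,}A}$-cochains: on the overlap, which is (near $\Gamma$) a disjoint union of contractible pieces where $\widehat{\mathcal O}^{\mathrm{GCom\,}A}=\mathcal C^{\mathrm{GCom\,}A}$ is connected, one writes the transition cocycle as a product $g_1 g_2^{-1}$ with $g_1$ extending over a neighborhood of $K_{\mu-1}$ and $g_2$ over a neighborhood of $\Gamma_2^{(\mu)}$; connectedness of the section group (so that elements can be joined to $I$ by paths, hence logarithmed piecewise) plus the Stein case of Theorem~\ref{23.7.16} (to control the behavior near the finitely many points of $\Gamma^{\mathrm o}\subseteq Z$, using that $\Gamma^{\mathrm o}\cap\Gamma=\emptyset$ so these are handled separately by small disks) is what makes this patching go through. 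The local finiteness~(d) guarantees the infinite product of the resulting corrections converges locally uniformly, yielding a global splitting of $f$.

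The main obstacle, as I see it, is precisely this patching step at the singular points $\Gamma^{\mathrm o}\subseteq Z$: there $\mathrm{GCom\,}A$ need not be locally trivial even topologically (the remark after Definition~\ref{26.9.17n} warns of this), and $\widehat{\mathcal O}^{\mathrm{GCom\,}A}$ is genuinely smaller than $\mathcal C^{\mathrm{GCom\,}A}$, so one cannot simply invoke continuous splitting and then regularize. The way around it is that a $Z$-adapted pair is designed so that $\Gamma^{\mathrm o}$ and $\Gamma$ are \emph{disjoint}: near each isolated point of $\Gamma^{\mathrm o}$ the attaching set is just that point (an isolated point of $\Gamma_1\cap\Gamma_2$), so one can shrink to a small Stein disk around it and use Proposition~\ref{4.10.17'} together with the fact that any $\widehat{\mathcal O}^{\mathrm{GCom\,}A}$-section on such a disk, being given by a germ of a holomorphic section at the center, can be corrected holomorphically; thus the singular and the smooth contributions to the overlap are treated by two independent, already-available mechanisms, and glued. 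I would expect the write-up to require care in: (i) the choice of refinement making bumps subordinate and overlaps contractible; (ii) a clean formulation of the patching/attachment lemma for $\widehat{\mathcal O}^{\mathrm{GCom\,}A}$; and (iii) the convergence of the inductive corrections, which is standard given Theorem~\ref{5.8.16}(d) but must be stated.
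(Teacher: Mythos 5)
Your overall architecture is the paper's: fix $Z$ (non-Jordan-stable plus nonsmooth points), pass to the normalization, exhaust by bumps (Theorem \ref{5.8.16}), push them down to $Z$-adapted pairs (Lemma \ref{1.9.17}), and split the cocycle inductively along the exhaustion using connectedness of the section group over contractible Jordan-stable sets (Corollary \ref{9.8.16+}). But at the point you yourself identify as the main obstacle --- the attaching points $\Gamma^{\mathrm o}\subseteq Z$ --- your proposed mechanism does not work, and the device the paper actually uses is missing. The paper replaces $\widehat{\mathcal O}^{\mathrm{GCom\,}A}$ by the subsheaf $\widehat{\mathcal O}^{\mathrm{GCom\,}A}(\cdot,Z)$ of sections identically equal to $I$ near $Z$ (Definition \ref{20.8.17}) and shows that this loses nothing: every covering has a $Z$-adapted refinement (Lemma \ref{27.9.17--}), over which any $\widehat{\mathcal O}^{\mathrm{GCom\,}A}$-cocycle \emph{is} a cocycle of the subsheaf, so it suffices to prove $H^1\big(X,\widehat{\mathcal O}^{\mathrm{GCom\,}A}(\cdot,Z)\big)=0$ (Remark \ref{27.9.17+}). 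After this reduction the sets $\Gamma^{\mathrm o}$ are inert --- every section in sight is $I$ near them --- and the patching lemma (Lemma \ref{27.8.17'}) only ever has to cut off sections over the contractible pieces $\Gamma$ in the smooth Jordan-stable locus. Your substitute (a small Stein disk around each point of $\Gamma^{\mathrm o}$ plus Proposition \ref{4.10.17'}) does not achieve this: Proposition \ref{4.10.17'} upgrades an $\widehat{\mathcal O}$-splitting of an $\mathcal O^{\mathrm{GCom\,}A}$-cocycle to an $\mathcal O$-splitting, which is the wrong direction; what is needed is to deform a given $\widehat{\mathcal O}^{\mathrm{GCom\,}A}$-section to $I$ across an annular transition region around the singular point, and that region is not contractible, so none of the connectedness results you cite apply to it. ``Can be corrected holomorphically'' is, as stated, not an argument.

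A second gap is the final step. You claim ``the infinite product of the resulting corrections converges locally uniformly,'' but there is no Runge-type approximation theorem for $\widehat{\mathcal O}^{\mathrm{GCom\,}A}$ that would make the corrections small, so a genuine convergence argument is not available. The paper avoids limits entirely: Lemma \ref{27.8.17'}(ii) and Statements 2--3 in the proof of Lemma \ref{28.9.17'} produce each new splitting so that it agrees \emph{exactly} with the previous one on a prescribed smaller compactum $K_{m(j-2)}$, hence the sequence of cochains stabilizes on every compact set. If what you intend is a locally finite product (each correction $\equiv I$ outside a neighborhood of the current bump, combined with property (d) of Theorem \ref{5.8.16}), you must arrange and state that; as written the convergence claim is unsupported. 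Finally, you gloss over how the solutions on the different connected components of $\widetilde X$ (equivalently, the irreducible components of $X$) are assembled; the paper does this in the deduction of Theorem \ref{27.9.17''} from Lemma \ref{28.9.17'}, and it again relies on all sections being $\equiv I$ near $Z$, one more place where the $(\cdot,Z)$-modification is essential.
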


\begin{defn}\label{20.8.17} Let $Z$ be a  discrete and closed subset of $X$.

If $U$ is a nonempty open subset of $X$, then we define:

$\widehat{\mathcal O}^{\mathrm{Com\,} A}(U,Z)$ is the  subalgebra  of all $f\in \widehat{\mathcal O}^{\mathrm{Com\,} A}(U)$ such that there exists an open neighborhood $U_Z$ of $Z$  (depending on $f$) with $f=0$ on $U_Z\cap U$.

$\widehat{\mathcal O}^{\mathrm{GCom\,}A}(U,Z)$ is the subgroup of all  $f\in \widehat{\mathcal O}^{\mathrm{GCom\,} A}(U)$ such that there exists an open neighborhood $U_Z$  of $Z$  (depending on $f$) with $f=I$ on $U_Z\cap U$.

The so defined sheaves on $X$ will be denoted by  $\widehat{\mathcal O}^{\mathrm{Com\,}A}(\cdot,Z)$ and $\widehat{\mathcal O}^{\mathrm{GCom\,}A}(\cdot,Z)$, respectively.
\end{defn}

\begin{defn}\label{27.9.17'''} Let $Z$ be a  discrete and closed subset of $X$. An open covering of $X$, $\Cal  U=\{U_{ij}\}_{i,j\in I}$, will be called {\bf $Z$-adapted} if
\begin{equation}\label{27.9.17-}
U_i\cap U_j\cap Z=\emptyset\quad\text{for all }i,j\in I\text{ with }i\not=j.
\end{equation}
\end{defn}
\begin{lem}\label{27.9.17--}Let $Z$ be a  discrete and closed subset of $X$. Then each open covering of $X$ admits a $Z$-adapted refinement.
\end{lem}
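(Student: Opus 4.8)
The statement to prove is that, given a discrete closed $Z\subseteq X$, every open covering of the 1-dimensional Stein space $X$ admits a $Z$-adapted refinement, i.e.\ a refinement $\{U_i\}_{i\in I}$ with $U_i\cap U_j\cap Z=\emptyset$ whenever $i\neq j$. The idea is simple: the obstruction to $Z$-adaptedness is that points of $Z$ might lie in double intersections, so I would arrange that each point of $Z$ has a small neighborhood contained in exactly one set of the covering and that no two such neighborhoods meet, nor does any of them meet any other set of the refinement at a point of $Z$.

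\emph{First} I would start from an arbitrary open covering $\Cal V=\{V_\alpha\}_{\alpha\in A}$ of $X$. For each $z\in Z$, since $Z$ is discrete and closed, I can choose an open neighborhood $W_z$ of $z$ such that: $W_z\subseteq V_{\alpha(z)}$ for some index $\alpha(z)$; $W_z\cap Z=\{z\}$; and the family $\{W_z\}_{z\in Z}$ is pairwise disjoint (shrink each $W_z$ to achieve local finiteness/disjointness, possible because $Z$ is discrete and closed, hence a closed discrete set in a metrizable — even paracompact — space). \emph{Second}, on the open set $X\setminus Z$, which is again (open in $X$, hence locally nice, and) covered by $\{V_\alpha\cap(X\setminus Z)\}$, I would take any refinement $\{O_\beta\}_{\beta\in B}$ of this covering by open subsets of $X\setminus Z$; since these $O_\beta$ are disjoint from $Z$ the condition \eqref{27.9.17-} is automatic among them and between them and the $W_z$'s. \emph{Third}, the combined family $\{W_z\}_{z\in Z}\cup\{O_\beta\}_{\beta\in B}$ is an open covering of $X$ (the $W_z$ cover $Z$, the $O_\beta$ cover $X\setminus Z$), it refines $\Cal V$, and any double intersection either involves two $O_\beta$'s (disjoint from $Z$), or an $O_\beta$ and a $W_z$ (the $O_\beta$ already misses $Z$), or two distinct $W_z$'s (which are disjoint). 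Hence \eqref{27.9.17-} holds and the family is $Z$-adapted.

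\emph{The one point requiring a little care} — and the only real obstacle — is the pairwise disjointness and the existence of the shrunken neighborhoods $W_z$: one must use that $Z$ is discrete and closed in $X$. Discreteness gives, for each $z$, a neighborhood meeting $Z$ only in $z$; closedness (together with the fact that $X$, being a Stein space, is paracompact and locally compact Hausdorff — indeed metrizable) lets one shrink these to a pairwise disjoint, even locally finite, family. No reference to the complex structure beyond this is needed; the argument is purely topological. Once the $W_z$ are fixed with $W_z\cap Z=\{z\}$ and the $W_z$ mutually disjoint, there is nothing left to check, since membership in a single $V_{\alpha(z)}$ makes $\{W_z\}\cup\{O_\beta\}$ a refinement of the original covering.
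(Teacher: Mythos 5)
Your proof is correct and follows essentially the same route as the paper: pairwise disjoint neighborhoods $W_z$ of the points of $Z$, each contained in a member of the given covering, combined with sets avoiding $Z$ (the paper simply takes $U_i\cap(X\setminus Z)$ where you allow an arbitrary refinement of the covering of $X\setminus Z$). Your explicit remark that each $W_z$ must lie in some $V_{\alpha(z)}$ is a point the paper leaves implicit, but there is no substantive difference.
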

\begin{proof}
Let an open covering $\Cal  U=\{U_{i}\}_{i\in I}$ of $X$ be given.  Since $Z$ is discrete and closed in $X$, we can find a family $\{U^*_\xi\}_{\xi\in Z}$ such that, for each $\xi\in Z$, $U^*_\xi$ is an open neighborhood of $\xi$, and
$U^*_\xi\cap U^*_\eta=\emptyset$ for all $\xi,\eta\in Z$ with $\xi\not=\eta$. Further, we set $U^*_i=U_i\cap (X\setminus Z)$ for $i\in I$. Then
$\{U^*_\alpha\}_{\alpha\in I\cup Z}$ (we may assume that $I\cap Z=\emptyset$) is a $Z$-adapted refinement of $\Cal U$.
\end{proof}

\begin{rem}\label{27.9.17+} To prove Theroem \ref{27.9.17''}, it is sufficient to find a discrete and closed subset $Z$ of $X$ such that
\begin{equation}\label{27.9.17++}
H^1\big(X,\widehat{\mathcal O}^{\mathrm{GCom\,}A}(\cdot,Z)\big)=0.
\end{equation}
Indeed, assume $Z$ is such a set and $f$ is an $\widehat{\mathcal O}^{\mathrm{GCom\,}A}$-cocycle.  Let $\mathcal U$ be the covering of $f$. Then, by Lemma \ref{27.9.17--}, we can find a $Z$-adapted open covering of $X$, $\mathcal U^*$, which is a refinement of $\mathcal U$. Let $f^*$ be a  $(\mathcal U^*,\widehat{\mathcal O}^{\mathrm{GCom\,}A})$-cocycle, which is induced by $f$. Since $\mathcal U^*$ is $Z$-adapted, then $f^*$ can be interpreted as a  $\big(\mathcal U^*,\widehat{\mathcal O}^{\mathrm{GCom\,}A}(\cdot,Z)\big)$-cocycle, and it follows from \eqref{27.9.17++} that $f^*$ splits as an $\widehat{\mathcal O}^{\mathrm{GCom\,}A}(\cdot,Z)$-cocycle. As $\widehat{\mathcal O}^{\mathrm{GCom\,}A}(\cdot,Z)$ is a subsheaf of $\widehat{\mathcal O}^{\mathrm{GCom\,}A}$, this means in particular that $f^*$ splits as an  $\widehat{\mathcal O}^{\mathrm{GCom\,}A}$-cocycle.
\end{rem}

\begin{lem}\label{27.8.17'} Let $Z$ be a discrete and closed subset of $X$, which contains all nonsmooth points of $X$ and all points of $X$ which are not Jordan stable for $A$ (by Proposition \ref{11.6.16} such $Z$ exist), and let $(\Gamma_1,\Gamma_2)$ be a $Z$-adapted pair  in $X$ (Def. \ref{22.9.17'}). Then:

{\em (i)} Let $U$ be an open neighborhood of $\Gamma_1\cap \Gamma_2$ and $f\in  \widehat{\mathcal O}^{\mathrm{GCom\,}A}(U,Z)$. Then there exist  a section $\widetilde f\in \widehat{\mathcal O}^{\mathrm{GCom\,}A}(X,Z)$ and an open neighborhood $V\subseteq U$  of $\Gamma_1\cap \Gamma_2$ such that $\widetilde f=f$ on $V$ and $\widetilde f=I$ on $X\setminus U$.

{\em (ii)} Let $U_1$ be an open neighborhood of $\Gamma_1$,   $f_1\in\widehat{\mathcal O}^{\mathrm{GCom\,}A}(U_1,Z)$, and let $V$ be an open neighborhood of $\Gamma_1\cap \Gamma_2$. Then there exist an open neighborhood $W$ of $\Gamma_1\cup \Gamma_2$, a section $f\in \widehat{\mathcal O}^{\mathrm{GCom\,}A}(W,Z)$ and an open neighborhood $U_1'$ of $\Gamma_1$ such that $U'_1\subseteq U_1$ and $f=f_1$ on $(U'_1\cap W)\setminus V$.

{\em (iii)}  Let $\Cal U=\{U_i\}_{i\in I}$ be an open covering of $X$ such that $\Gamma_2$ is contained in at least one set of $\Cal U$. Let $f\in Z^1\big(\Cal U,\widehat{\mathcal O}^{\mathrm{GCom\,}A}(\cdot,Z)\big)$ such that, for some open neighborhood $W_1$ of $\Gamma_1$, $f\big\vert_{W_1}$ splits. Then there exist an open neighborhood $W$  of $\Gamma_1\cup \Gamma_2$ such that also $f\big\vert_{W}$ splits.
\end{lem}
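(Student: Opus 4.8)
The plan is to prove the three statements in order, each time exploiting that a $Z$-adapted pair decomposes $\Gamma_1\cap\Gamma_2$ into the ``discrete'' part $\Gamma^{\mathrm o}\subseteq Z$ and the ``nice'' part $\Gamma$ (a finite disjoint union of compacta with contractible neighborhood bases), and that near $Z$ all sections of $\widehat{\mathcal O}^{\mathrm{GCom\,}A}(\cdot,Z)$ are just $I$. For (i), the section $f$ already equals $I$ on some neighborhood $U_Z\cap U$ of $\Gamma^{\mathrm o}$, so the extension problem is really only local around $\Gamma$, where $\widehat{\mathcal O}^{\mathrm{GCom\,}A}$ agrees with $\Cal C^{\mathrm{GCom\,}A}$ (Corollary \ref{26.7.17m}) and, on a contractible Stein neighborhood, with $\Cal C^G$ for $G=\mathrm{GCom\,}A(\xi)$, which is connected (Corollary \ref{9.8.16+}). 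First I would shrink $U$ to a neighborhood of $\Gamma_1\cap\Gamma_2$ that is the disjoint union of $U_Z$ (where $f=I$) and finitely many contractible pieces around the components of $\Gamma$; on each such piece, using that $\widehat{\mathcal O}^{\mathrm{GCom\,}A}$ is locally path-connected and that the piece retracts to a point, connect $f$ by a path inside $\widehat{\mathcal O}^{\mathrm{GCom\,}A}$ to the constant $I$, then cut off that path by a continuous (hence, after the identification, admissible) real-valued bump function supported in $U$ and equal to $1$ near $\Gamma_1\cap\Gamma_2$. This produces $\widetilde f$ equal to $f$ near $\Gamma_1\cap\Gamma_2$, equal to $I$ outside $U$, and equal to $I$ near $Z$, so $\widetilde f\in\widehat{\mathcal O}^{\mathrm{GCom\,}A}(X,Z)$.

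For (ii), the idea is to push $f_1$ off of $\Gamma_2$: I would choose an open $U_1'$ with $\Gamma_1\subseteq U_1'\Subset U_1$ and a slightly smaller neighborhood of $\Gamma_1\cap\Gamma_2$; since $\Gamma_1\cap\Gamma_2=\Gamma^{\mathrm o}\cup\Gamma$ with $\Gamma^{\mathrm o}\subseteq Z$, and since $f_1=I$ near $Z$, the only obstruction to extending past $V$ comes from $\Gamma$, again handled on contractible pieces exactly as in (i): connect $f_1|_{\text{piece}}$ to $I$ by a path and glue with a cutoff so that the resulting $f$ agrees with $f_1$ on $(U_1'\cap W)\setminus V$ and equals $I$ on a neighborhood of $\Gamma_2\setminus\Gamma_1$ (and near $Z$), defining it as $I$ on the rest of a neighborhood $W$ of $\Gamma_1\cup\Gamma_2$. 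Statement (iii) is the heart: $f|_{W_1}$ splits, say $f_{ij}=h_ih_j^{-1}$ on $U_i\cap U_j\cap W_1$ with $h_i\in\widehat{\mathcal O}^{\mathrm{GCom\,}A}(U_i\cap W_1,Z)$; pick the index $i_0$ with $\Gamma_2\subseteq U_{i_0}$, set $g_i:=h_{i_0}^{-1}h_i$ (a ``normalized'' splitting that is $I$ when $i=i_0$), and note $g_i$ is a section over $U_i\cap U_{i_0}\cap W_1$ — but because $\Cal U$ is used only through its intersections, and $Z$-adaptedness forces $U_i\cap U_{i_0}\cap Z=\emptyset$ for $i\neq i_0$, these $g_i$ live over sets meeting $Z$ only inside $U_{i_0}$ where they equal $I$. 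The key step is to extend each $g_i$ from a neighborhood of $\Gamma_1\cap U_i$ across to a neighborhood of $(\Gamma_1\cup\Gamma_2)\cap U_i$ using part (ii) (with $\Gamma_1\cap U_i$, $\Gamma_2$ in place of $\Gamma_1,\Gamma_2$, and $f_1:=g_i$), obtaining $\widetilde g_i$ over $W\cap U_i$ for a common neighborhood $W$ of $\Gamma_1\cup\Gamma_2$, with $\widetilde g_i=g_i$ near $\Gamma_1$ and $\widetilde g_i=I$ near $\Gamma_2\setminus\overline{U_i}$; then check $f_{ij}=(h_{i_0}\widetilde g_i)(h_{i_0}\widetilde g_j)^{-1}$ on $W\cap U_i\cap U_j$, which gives the desired splitting of $f|_W$.

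The main obstacle I expect is bookkeeping in (iii): arranging that the extensions $\widetilde g_i$ obtained separately for different $i$ are compatible on overlaps $U_i\cap U_j$ (for $i,j\neq i_0$) so that $f_{ij}=\widetilde g_i\widetilde g_j^{-1}$ still holds after extension. This is exactly where $Z$-adaptedness is essential: on $U_i\cap U_j$ with $i,j\neq i_0$ one has $U_i\cap U_j\cap Z=\emptyset$, and on such sets $\widehat{\mathcal O}^{\mathrm{GCom\,}A}=\Cal C^{\mathrm{GCom\,}A}$ with connected contractible-local structure, so the cocycle relation is preserved under the path-and-cutoff homotopies provided all the cutoffs are taken with respect to one fixed neighborhood system of $\Gamma_1\cup\Gamma_2$ refining $\Cal U$; I would therefore first fix such a common refinement and a single partition-type family of real cutoff functions, carry out the extension of part (ii) uniformly with respect to it, and only then verify the cocycle identity. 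The other, more routine, obstacle is making sure at each step that ``equals $I$ near $Z$'' is not destroyed — but since every homotopy is performed on the contractible pieces around $\Gamma$, which are disjoint from $Z$, and the cutoffs are supported away from $Z$ (or rather act trivially near $Z$), this is automatic.
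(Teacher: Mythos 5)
Your outlines of (i) and (ii) follow the paper's proof in substance. For (i) the paper likewise isolates $\Gamma^{\mathrm o}\subseteq Z$ (where $f=I$ already), passes to a contractible neighborhood $W$ of $\Gamma$ consisting of Jordan stable points, uses the connectedness of $\widehat{\mathcal O}^{\mathrm{GCom\,}A}(W)$ (Corollary \ref{9.8.16+}) to join $f$ to $I$, and cuts off. One point you should make explicit: a scalar bump function cannot simply be multiplied against a $\mathrm{GL}(n,\C)$-valued path; the paper first refines the path so that $f=(I+g_1)\cdots(I+g_{m-1})$ with $\Vert g_j\Vert\le 1/2$ and replaces each $g_j$ by $\chi g_j$ (reparametrizing the path by $\chi$ would also do). Part (ii) in the paper is exactly your ``push $f_1$ off $\Gamma_2$'': apply (i) to $f_1$ on $U_1\cap V$, then glue $f_1\widetilde f^{-1}$ near $\Gamma_1$ with $I$ near $\Gamma_2$.

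The genuine gap is in (iii), and it is not mere bookkeeping. Your normalization $g_i:=h_{i_0}^{-1}h_i$ is on the wrong side: on $U_i\cap U_j\cap U_{i_0}\cap W_1$ one has $g_i^{}g_j^{-1}=h_{i_0}^{-1}f_{ij}^{}h_{i_0}^{}$, which is neither $I$ nor $f_{ij}$ in this nonabelian setting, so the $g_i$ do not glue to one section and you are left with a whole family to be extended compatibly --- essentially the original splitting problem over the larger set. The proposed remedy fails at several points: $(\Gamma_1\cap U_i,\Gamma_2)$ is not a $Z$-adapted pair ($\Gamma_1\cap U_i$ is not even compact), so part (ii) does not apply to it; $g_i$ is defined only on $U_i\cap U_{i_0}\cap W_1$, which is not a neighborhood of $\Gamma_1\cap U_i$; and the final identity $f_{ij}=(h_{i_0}\widetilde g_i)(h_{i_0}\widetilde g_j)^{-1}$ would require both that $h_{i_0}$ be defined on $W\cap U_i\cap U_j$ (it lives only on $U_{i_0}\cap W_1$) and that $\widetilde g_i^{}\widetilde g_j^{-1}=g_i^{}g_j^{-1}$ there, which independent path-and-cutoff extensions do not guarantee --- a common system of cutoffs does not help, because the paths joining the distinct $g_i$ to $I$ are themselves distinct. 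The paper's resolution is to normalize on the right: $g:=h_i^{-1}f_{ii_0}^{}$ $(=h_{i_0}^{-1})$ is, by the cocycle identity, independent of $i$ and hence a single section of $\widehat{\mathcal O}^{\mathrm{GCom\,}A}(\cdot,Z)$ on $W_1\cap U_{i_0}$, which is a neighborhood of $\Gamma_1\cap\Gamma_2$. Extend this one section by part (i) to $\widetilde g$ agreeing with $g$ on a neighborhood $V$ of $\Gamma_1\cap\Gamma_2$, shrink $W_1$ and $U_{i_0}$ so that they meet only inside $V$, and take as splitting sections $h_i$ over $U_i\cap W_1'$ for $i\ne i_0$ and $\widetilde g^{-1}$ over $U_{i_0}'$; Proposition \ref{17.12.15--} then yields the splitting of $f\vert_W$. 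Thus only one section need be extended, and part (ii) is not used in (iii) at all (it is needed later, in Statement 2 of the proof of Lemma \ref{28.9.17'}).
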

\begin{proof} (i) Let $\Gamma^{\mathrm o}$ and $\Gamma$ be as in Definition \ref{22.9.17'}.
If $\Gamma=\emptyset$, i.e., $\Gamma_1\cap \Gamma_2\subseteq Z$, then, by definition of $\widehat{\mathcal O}^{\mathrm{GCom\,}A}(U,Z)$, there is a neighborhood $V\subseteq U$ of $\Gamma_1\cap \Gamma_2$ such that $f=I$ on $V$, and we can define $\widetilde f\equiv I$.

Therefore, we may assume that $\Gamma\not=\emptyset$. Then,  shrinking $U$, we can achieve that
 $U=W^{\mathrm o}\cup W$, where $W^{\mathrm o}$ and $W$ are open neighborhoods of $\Gamma^{\mathrm o}$ and $\Gamma$, respectively, such that $W^{\mathrm o}\cap W=\emptyset$, $f=I$ on $W^{\mathrm o}$,   $W$  consists of a finite number of connected components each of which is contractible, and $W\cap Z=\emptyset$. Then all points of $W$ are   Jordan stable for $A$. Hence, by Corollary
  \ref{9.8.16+}, $\widehat{\mathcal O}^{\mathrm{GCom\,}A}(W)$ is connected. Therefore, we can find a continuous map $\theta:[0,1]\times W\to \mathrm{GL}(n,\C)$ such that
\begin{align*}&\theta(t,\cdot)\in \widehat{\mathcal O}^{\mathrm{GCom\,}A}(W)\quad\text{for all}\quad 0\le t\le 1,\\
&\theta(0,\cdot)=f\big\vert_W\quad\text{and}\quad \theta(1,\cdot)= I\quad\text{on}\quad W.
\end{align*}Choose open neighborhoods  $W''\subseteq W'\subseteq W$  of $\Gamma$ such that $W''$ is relatively compact in $W'$, and $W'$ is relatively compact in $W$. Then we can find a partition of $[0,1]$,
$0=t_1<t_2<\ldots<t_m=1$, so fine  that the maps $g_j\in \widehat{\Cal O}^{\mathrm{Com\,}A}(W')$, $1\le j\le m-1$, defined by $g_j(\zeta):=\theta(t_j,\zeta)\theta(t_{j+1},\zeta)^{-1}-I$ satisfy
$\Vert g_j\Vert\le 1/2$ on $W'$. Then $I+g_j\in \widehat{\Cal O}^{\mathrm{GCom\,}A}(W')$ and
\begin{equation}\label{30.8.17}
f=\big(I+g_1\big)\cdot\ldots\cdot\big(I+g_{m-1}\big)\quad\text{on}\quad W'.
\end{equation}
Choose a  continuous function $\chi:X\to [0,1]$ such that $\chi=1$ in $W''$ and $\chi=0$ on a neighborhood of  $X\setminus W'$, and define $\widetilde f\in \widehat{\mathcal O}^{\mathrm{GCom\,}A}(X)$ by
\[
\widetilde f^{}(\zeta)=\begin{cases}\big(I+\chi(\zeta)g_1(\zeta)\big)\cdot\ldots\cdot\big(I+\chi(\zeta)g_{m-1}(\zeta)\big)\quad&\text{if}\quad \zeta\in W',\\
I&\text{if}\quad \zeta\in X\setminus W'.
\end{cases}
\]
Since $X\setminus U\subseteq X\setminus W'$, then $\widetilde f=I$ on $X\setminus U$.
Since $\chi=1$ on $W''$, it follows from \eqref{30.8.17} that $\widetilde f=f$ on $W''$.
Moreover, as $W^{\mathrm o}\subseteq X\setminus W'$, we have $\widetilde f=I=f$ on $W^{\mathrm o}$. Hence,
$\widetilde f=f$ on the neighborhood $V:=W^{\mathrm o}\cup W''$ of $\Gamma_1\cap \Gamma_2$.

(ii) Since $U_1\cap V$ is a neighborhood of $\Gamma_1\cap \Gamma_2$, by part (i) of the lemma, we can find  a section $\widetilde f\in \widehat{\mathcal O}^{\mathrm{GCom\,}A}(X,Z)$ and an open neighborhood $V'\subseteq U_1\cap V$  of $\Gamma_1\cap \Gamma_2$ such that
$\widetilde f=f_1$ on $V'$ and $\widetilde f=I$ on $X\setminus (U_1\cap V)$. Choose open neighborhoods $U_1'$ of $\Gamma_1$ and  $U_2^{}$ of $\Gamma_2$ such that $U'_1\cap U^{}_2\subseteq V'$ and  $U'_1\subseteq U_1$. Further choose an open neighborhood $U'_2$ of $\Gamma_2$ such that $\overline{U'_2}\subseteq U^{}_2$. Then $W:=U'_1\cup U'_2$ is an open neighborhood of $\Gamma_1\cup \Gamma_2$. Consider the open sets
\[W_1:=W\setminus\overline{U'_2}\quad\text{and}\quad W_2:=W\cap U_2.
\] Then $W=W_1\cup W_2$. Moreover, $W^{}_1\subseteq U_1'$, $W_2\subseteq U_2$ and therefore $W_1\cap W_2\subseteq U_1'\cap U_2\subseteq V'_{}$, which implies that  $f^{}_1 \widetilde f^{-1}_{}=I$ on $W_1\cap W_2$. Hence, there is a well-defined section $f\in\widehat{\mathcal O}^{\mathrm{GCom\,}A}(W,Z)$ such that $f=f_1^{}\widetilde f_{}^{-1}$ on $W_1$  and
$f=I$ on $W_2$. It remains to prove that
\begin{equation}\label{31.1.18'}
f=f_1\quad\text{on}\quad (U'_1\cap W)\setminus V.
\end{equation}First observe that $(U'_1\cap W)\setminus V\subseteq X\setminus (U_1\cap V)$ and, hence,
\begin{equation}\label{31.1.18}
\widetilde f=I\quad\text{on}\quad (U'_1\cap W)\setminus V.
\end{equation} Moreover, $U'_1\cap W=(U'_1\cap W_1)\cup(U'_1\cap W\cap \overline{U'_2})\subseteq (U'_1\cap W_1)\cup V\subseteq W_1\setminus V$, which implies that
$(U'_1\cap W)\setminus V\subseteq W_1$ and, hence, $f=f_1^{}\widetilde f_{}^{-1}$ on $(U'_1\cap W)\setminus V$. Together with \eqref{31.1.18}, this proves \eqref{31.1.18'}.

(iii)  Since $f\big\vert_{W_1}$ splits, we have a family $f_i\in\widehat{\mathcal O}^{\mathrm{GCom\,}A}(U_i\cap W_1,Z)$ with
\begin{equation}\label{31.8.17'}
f_{ij}^{}=f_i^{}f_j^{-1}\quad \text{on}\quad U_i\cap U_j\cap W_1.
\end{equation}By hypothesis, $\Gamma_2\subseteq U_{i_0}$ for some $i_0\in I$. Then,  by \eqref{31.8.17'},
\begin{equation*}
f_i^{-1}f^{}_{ii_0}=f_i^{-1}f^{}_{ij}f^{}_{ji_0}=f_j^{-1} f^{}_{j i_0}\quad \text{on}\quad U_i\cap U_j\cap W_1\cap U_{i_0}.
\end{equation*}Therefore, we have  a well-defined section $g\in\widehat{\mathcal O}^{\mathrm{GCom\,}A}(W_1\cap U_{i_0},Z)$ with
\begin{equation}\label{19.9.17'}
g=f_i^{-1}f^{}_{ii_0}\quad\text{on}\quad U_i\cap W_1\cap U_{i_0}.
\end{equation} By part (i) of the lemma, we can find an open neighborhood $V\subseteq W_1\cap U_{i_0}$ of $\Gamma_1\cap \Gamma_2$ and a section $\widetilde g\in\widehat{\mathcal O}^{\mathrm{GCom\,}A}(X,Z)$ with $\widetilde g=g$ on $V$ and, hence, by \eqref{19.9.17'},
\begin{equation}\label{31.8.17neu}
\widetilde g=f_i^{-1}f^{}_{ii_0}\quad \text{on}\quad U_i\cap V.
\end{equation}
Choose an open neighborhood $W'_1\subseteq W^{}_1$ of $\Gamma_1$ and an open neighborhood $U'_{i_0}\subseteq U^{}_{i_0}$ of $\Gamma_2$ such that
$W'_1\cap U'_{i_0}\subseteq V$. Then  $W:=W'_1\cup U'_{i_0}$ is an open neighborhood of $\Gamma_1\cup\Gamma_2$, and, by \eqref{31.8.17neu},
\begin{equation}\label{31.8.17}
\widetilde g=f_i^{-1}f^{}_{ii_0}\quad \text{on}\quad U_i\cap W'_1\cap U'_{i_0}.
\end{equation}
Define an open covering
 $\Cal U^*=\{U_i^*\}_{i\in I}$ of $W$  by
\[
U_i^*=\begin{cases}U^{}_i\cap W'_1&\text{ if }i\in I\setminus\{i_0\},\\
U'_{i_0}&\text{ if } i=i_0.
\end{cases}
\] This is a refinement of $\Cal U\cap W$, and $f^*:=\big\{f^{}_{ij}\big\vert_{U^*_i\cap U^*_j}\big\}_{i,j\in I}$ is a $\big(\Cal U^*, \widehat{\mathcal O}^{\mathrm{GCom\,}A}(\cdot,Z)\big)$-cocycle which is induced by $f\big\vert_W$. Set
 \[
 f^*_i=\begin{cases}f^{}_i\big\vert_{U_i^*}\quad&\text{if}\quad i\in I\setminus \{i_0\},\\
 \widetilde g^{-1}\big\vert_{U'_{i_0}}&\text{if}\quad i=i_0.
 \end{cases}
 \]Then, by \eqref{31.8.17'}   and  \eqref{31.8.17},  $f_i^*(f_j^*)^{-1}=f^*_{ij}$ on $U^*_i\cap U_j^*$.
 Hence, $f^*$ splits, which means,  by
 Proposition \ref{17.12.15--}, that $f\big\vert_W$ splits.
\end{proof}

\begin{lem}\label{28.9.17'} Assume that $X$ is irreducible\footnote{$X$ is called
{\bf irreducible} if the set of smooth points of $X$ is connected, see, e.g., \cite[Ch. 9, \S 1.2]{GR} or \cite[Ch. V, \S 4.5, Prop. $\alpha$]{L}.}, and $Z$ is a discrete and closed subset of $X$ which contains all nonsmooth points of $X$ and all points of $X$ which are not Jordan-stable for $A$.  Then $H^1\big(X,\widehat{\mathcal O}^{\mathrm{GCom\,}A}(\cdot,Z)\big)=0$.
\end{lem}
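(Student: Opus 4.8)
The plan is to reduce the vanishing of $H^1\big(X,\widehat{\mathcal O}^{\mathrm{GCom\,}A}(\cdot,Z)\big)$ to an exhaustion argument, using the bump technology of Section \ref{23.7.16+} together with Lemma \ref{1.9.17} to transport bumps from the (smooth, noncompact, connected) normalization $\widetilde X$ down to $Z$-adapted pairs in $X$, and then using the three parts of Lemma \ref{27.8.17'} as the ``building blocks'' for splitting a cocycle step by step.

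First I would fix an $\widehat{\mathcal O}^{\mathrm{GCom\,}A}(\cdot,Z)$-cocycle $f$ on $X$ with covering $\mathcal U$. Since $X$ is irreducible, its normalization $\pi:\widetilde X\to X$ is a noncompact connected Riemann surface (it is noncompact because $X$ is Stein and $1$-dimensional, hence noncompact unless trivial). Apply Theorem \ref{5.8.16} to $\widetilde X$ with the open covering $\pi^{-1}(\mathcal U)$, suitably refined so that each member is small: this yields a sequence $(B_\mu)_{\mu\in\N}$ of compact subsets of $\widetilde X$ that are bumps in the sense of Definition \ref{3.8.16}, exhaust $\widetilde X$, are locally finite, and each is contained in one set of the covering. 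By Lemma \ref{1.9.17}, for each $\mu\ge 1$ there is a $Z$-adapted pair $(\Gamma_1^{(\mu)},\Gamma_2^{(\mu)})$ in $X$ with $\Gamma_1^{(\mu)}\subseteq\pi(B_0\cup\dots\cup B_{\mu-1})$, $\Gamma_2^{(\mu)}\subseteq\pi(B_\mu)$, and $\Gamma_1^{(\mu)}\cup\Gamma_2^{(\mu)}=\pi(B_0\cup\dots\cup B_\mu)$; moreover, because $B_\mu$ lies in one member of the refined covering, $\Gamma_2^{(\mu)}$ lies in one member of $\mathcal U$. Then $K_\mu:=\pi(B_0\cup\dots\cup B_\mu)$ is an exhaustion of $X$ by compact sets, each step being a $Z$-adapted extension of the previous one. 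For the base step $K_0=\pi(B_0)$, Remark \ref{30.9.17} (applied on $\widetilde X$, since $B_0$ sits in one set of the covering) shows $f$ splits over a neighborhood of $B_0$, hence over a neighborhood of $K_0$.

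The inductive step is where Lemma \ref{27.8.17'}(iii) does the work: assuming $f|_{W_{\mu-1}}$ splits for some open neighborhood $W_{\mu-1}$ of $K_{\mu-1}\supseteq\Gamma_1^{(\mu)}$, apply part (iii) of that lemma with the $Z$-adapted pair $(\Gamma_1^{(\mu)},\Gamma_2^{(\mu)})$ to obtain an open neighborhood $W_\mu$ of $\Gamma_1^{(\mu)}\cup\Gamma_2^{(\mu)}\supseteq K_\mu$ over which $f|_{W_\mu}$ splits. (Parts (i) and (ii) are already packaged inside the proof of (iii), so they are used implicitly.) This gives a sequence of splittings $f_{ij}=f_i^{(\mu)}(f_j^{(\mu)})^{-1}$ over shrinking neighborhoods of the exhausting compacta. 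The remaining issue is to pass from this sequence of ``local'' splittings to one global splitting over all of $X$ — this is the main obstacle, and the standard device is to arrange the splittings to be compatible on the overlaps in the limit. Concretely, at stage $\mu$ one should not apply Lemma \ref{27.8.17'}(iii) blindly but rather to a \emph{modified} cocycle, obtained from $f$ by multiplying with the coboundary of the previous splitting, so that the new splitting data agree with the old on a neighborhood of $K_{\mu-1}$ up to a correction that is the identity there; by the local finiteness property (d) of Theorem \ref{5.8.16} (each compact set meets only finitely many $B_\mu$, hence the relevant modifications stabilize locally), the infinite product of corrections converges and defines global sections $h_i\in\widehat{\mathcal O}^{\mathrm{GCom\,}A}(U_i,Z)$ with $f_{ij}=h_ih_j^{-1}$.

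I expect the hardest point to be making this convergence/compatibility bookkeeping precise while staying inside the subsheaf $\widehat{\mathcal O}^{\mathrm{GCom\,}A}(\cdot,Z)$ — i.e., checking that all the corrections are indeed sections of that subsheaf (equal to $I$ near $Z$) and that the products respect the $Z$-adapted structure at each step, which is exactly why the $Z$-adapted covering condition \eqref{27.9.17-} and the ``$=I$ near $Z$'' clause in Definition \ref{20.8.17} were built in. One technical care point: the bumps produced by Theorem \ref{5.8.16} live on $\widetilde X$, and $\pi$ is only a homeomorphism away from $\pi^{-1}(Z)$; Lemma \ref{1.9.17} already handles the behavior at $Z$ by putting the $Z$-part into $\Gamma^{\mathrm o}\subseteq Z$, on which all sections are the identity anyway, so this does not cause trouble provided one checks that the open neighborhoods $W_\mu$ can be chosen to meet $Z$ only in sets where the splitting sections are forced to be $I$.
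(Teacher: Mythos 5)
Your overall strategy coincides with the paper's: normalize, produce bumps on $\widetilde X$ via Theorem \ref{5.8.16}, push them down to $Z$-adapted pairs via Lemma \ref{1.9.17}, and split $f$ over neighborhoods of the exhausting compacta $K_\mu=\pi(B_0\cup\ldots\cup B_\mu)$ by induction with Lemma \ref{27.8.17'}(iii). That part is sound and is exactly Statement 1 of the paper's proof.

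The gap is in the passage from these local splittings to a global one, which you yourself flag as ``the main obstacle'' but do not carry out. Two concrete problems. First, your parenthetical claim that parts (i) and (ii) of Lemma \ref{27.8.17'} are ``already packaged inside the proof of (iii)'' is false for (ii): the proof of (iii) uses only (i), and part (ii) is precisely the tool needed for the gluing step and nowhere else, so dismissing it removes the key ingredient. Second, your proposed mechanism --- apply (iii) to a cocycle modified by the coboundary of the previous splitting --- is not well defined, since the sections $f_i^{(\mu-1)}$ live only on $U_i\cap W_{\mu-1}$ and hence do not modify $f$ outside $W_{\mu-1}$. The paper's device should be made explicit: given the old splitting $\{f_\alpha^{(\ell)}\}$ near $K_{m(\ell)}$ and any new splitting $\{F_\alpha\}$ near $K_{m(\ell+1)}$ (supplied by Statement 1), the quotient $(f_\alpha^{(\ell)})^{-1}F_\alpha$ is independent of $\alpha$ by the cocycle relation, hence defines a single global section $g$ of $\widehat{\mathcal O}^{\mathrm{GCom\,}A}(\cdot,Z)$ near $K_{m(\ell)}$; one then applies Lemma \ref{27.8.17'}(ii) bump by bump to produce $h$ near $K_{m(\ell+1)}$ with $h=g$ on $K_{m(\ell-1)}$, and sets $f_\alpha^{(\ell+1)}=F_\alpha h^{-1}$. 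For part (ii) to leave $g$ untouched on the smaller compact set one must first pass to a subsequence $m(j)$ with $K_{m(j)}\cap\pi(B_\mu)=\emptyset$ for $\mu\ge m(j+1)$ and $K_{m(j)}\subseteq\mathrm{int\,}K_{m(j+1)}$ --- this is where property (d) of Theorem \ref{5.8.16} actually enters, not as convergence of an infinite product. The resulting agreement $f_\alpha^{(j)}=f_\alpha^{(j-1)}$ on $K_{m(j-2)}$ makes the sequence of splittings eventually constant on every compact set, so no analytic convergence argument is needed at all; as written, your ``infinite product of corrections converges'' conceals exactly the step that has to be proved.
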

\begin{proof} Let  an $\widehat{\mathcal O}^{\mathrm{GCom\,}A}(\cdot,Z)$-cocycle on $X$ be given,
 and let $\Cal U=\{U_\alpha\}_{\alpha\in I}$ be the covering of $f$.
Let
 $\pi:\widetilde X\to X$ be the normalization of $X$ (see, e.g., \cite[Ch. VI, \S 4]{L}).   Since $X$ is irreducible, $\widetilde X$ is connected (see, e.g., \cite[Ch. VI, \S 4.2]{L}). Since $X$ is 1-dimensional, by the Puiseux theorem (see, e.g., \cite[Ch. VI, \S 4.1]{L}), $\widetilde X$ is a Riemann surface. Since $X$ is Stein  and, hence, noncompact, $\widetilde X$ is noncompact. Set $\widetilde{\mathcal U}=\big\{\pi^{-1}(U_i)\big\}_{i\in I}$. Then, by Theorem \ref{5.8.16}, we can find a sequence $(B_\mu)_{\mu\in\N}$ of compact subsets of $\widetilde X$ such that
\begin{itemize}
\item[(a)] for each $\mu\in \N$, $B_\mu$ is contained in at least one  set  of  $\widetilde{\Cal U}$;
\item[(b)] for each $\mu\in \N^*$, $(B_0\cup\ldots\cup B_{\mu-1},B_{\mu})$ is a bump in $\widetilde X$;
\item[(c)] $\widetilde X=\bigcup_{\mu\in\N}B_\mu$;
\item[(d)] for each compact set $L\subseteq \widetilde X$, there exists $N(\Gamma)\in\N$ such that $B_\mu\cap L=\emptyset$ if $\mu\ge N(\Gamma)$.
\end{itemize}

From (a) we obtain
\begin{itemize}
\item[(a')] For each $\mu\in \N$, $\pi(B_\mu)$ is contained in at least one set of  $\mathcal U$.
\end{itemize}

Set $K_j=\pi(B_0\cup\ldots\cup B_j)$.

{\bf Statement 1.} For each $j\in\N$, there exists an open neighborhood $W$ of $K_j$ such that $f\vert_{W}$ splits.

{\em Proof of Statement 1.} By (a'), for some  $\alpha_0\in I$, $ U_{\alpha_0}$ is a neighborhood of $K_0$. Since $f\vert_{U_{\alpha_0}}$ splits (Remark \ref{30.9.17}), this proves the claim of the statement for $j=0$.

Proceeding by induction, assume that, for some $\ell\in\N$, we already have an open neighborhood $W$ of $K_\ell$ such that $f\vert_{W}$ splits.
By (b) and Lemma \ref{1.9.17}, then we can find a $Z$-adapted pair $(\Gamma_1,\Gamma_2)$ in $X$ such that
\begin{align}
&\label{30.9.17'} \Gamma_1\subseteq K_\ell,\\
&\label{1.10.17a}\Gamma_2\subseteq \pi(B_{\ell+1}),\\
&\label{30.9.17''}\Gamma_1\cup \Gamma_2=K_{\ell+1}.
\end{align}
By \eqref{30.9.17'}, $W$ is an open neighborhood of $\Gamma_1$. By \eqref{1.10.17a} and (a'), $\Gamma_2$ is contained in at least one set of $\mathcal U$. Therefore, by  Lemma \ref{27.8.17'} (iii), we can find an open neighborhood $V$ of $\Gamma_1\cup\Gamma_2$ such that $f\vert_{V}$ splits. By
 \eqref{30.9.17''}, this proves the claim of the statement for $j=\ell+1$. Statement 1 is proved.

Let $\mathrm{int\,} K_j$ be the interior of  $K_j$. For each $j\in\N$, take a relatively compact open subset $\Omega_j$ of $X$ such that $K_j\subseteq \Omega_j$. Then, by (d) ($\pi$ is proper), for each $j\in\N$, we can find $k(j)>j$ in $\N^*$ such that
\begin{equation}\label{2.2.18} \pi^{-1}(\Omega_j) \cap B_\mu=\emptyset\quad\text{if}\quad\mu\ge k(j)
\end{equation}and, hence,
\begin{equation}\label{2.2.18'} K_j \cap \pi(B_\mu)=\emptyset\quad\text{if}\quad\mu\ge k(j).
\end{equation}Moreover, by (c),  it follows from \eqref{2.2.18} that
$\pi^{-1}(\Omega_j)\subseteq B_0\cup\ldots\cup B_{k(j)}$ and, therefore, $\Omega_j\subseteq K_{k(j)}$, which implies that
\begin{equation}\label{1.2.18-}
K_j\subseteq\mathrm{int\,}K_{k(j)}.
\end{equation}
Now we  define a strictly  increasing sequence $m(j)\in \N$, $j\in \N$, setting $m(0)=0$ and $m(j+1)=k(m(j))$ for $j\in \N^*$. Then by \eqref{2.2.18'} and \eqref{1.2.18-}, for all $j\in \N$,
\begin{align}&\label{1.10.17a'}K_{m(j)}\cap  \pi(B_\mu)=\emptyset\quad\text{if}\quad \mu\ge m(j+1),\\
&\label{1.10.17a''}
K_{m(j)}\subseteq \mathrm{int\,} K_{m(j+1)}.
\end{align}

{\bf Statement 2.} Let $j,\ell\in\N$ with $m(j+1)\le \ell$, let $W$  be an open neighborhood of $K_{\ell}$ and $g\in\widehat{\mathcal O}^{\mathrm{GCom\,}A}(W,Z)$. Then there exist an open neighborhood $W'$ of $K_{\ell+1}$ and  $h\in \widehat{\mathcal O}^{\mathrm{GCom\,}A}(W',Z)$ such that
\begin{equation}\label{2.10.17'neu}
h=g\quad\text{on}\quad K_{m(j)}.
\end{equation}

{\em Proof of Statement 2.} By \eqref{1.10.17a'}, $K_{m(j)}\cap\pi(B_{\ell+1})=\emptyset$. Therefore, we can find an open  set  $V$ with
\begin{align}&\label{2.10.17'''} \pi(B_{\ell+1})\subseteq V,\\
&\label{2.10.17''}
K_{m(j)}\cap V=\emptyset.
\end{align}
By (b) and Lemma \ref{1.9.17}, we can find a $Z$-adapted pair $(\Gamma_1,\Gamma_2)$ in $X$ such that
\begin{align}
&\label{30.9.17'2} \Gamma_1\subseteq K_\ell,\\
&\label{1.10.17a2}\Gamma_2\subseteq \pi(B_{\ell+1}),\\
&\label{30.9.17''2}\Gamma_1\cup \Gamma_2=K_{\ell+1}.
\end{align}
Then: By \eqref{30.9.17'2}, $W$ is an open neighborhood of $\Gamma_1$.  By \eqref{1.10.17a2} and (a'), $\Gamma_2$ is contained in at least one set of $\mathcal U$. By \eqref{1.10.17a2} and  \eqref{2.10.17'''}, $V$ is a neighborhood of $\Gamma_2$.
Therefore, from Lemma \ref{27.8.17'} (ii) we get an open neighborhood $W'$ of $\Gamma_1\cup\Gamma_2$, i.e., by \eqref{30.9.17''2}, of $K_{\ell+1}$, a section $h\in \widehat{\mathcal O}^{\mathrm{GCom\,}A}(W',Z)$ and an open neighborhood $W_1$ of $\Gamma_1$ such that $W_1\subseteq W$ and
\begin{equation}\label{2.2.18''}
h=g\quad\text{on}\quad (W_1\cap W')\setminus V.
\end{equation}Since $\ell\ge m(j+1)\ge m(j)$ and by \eqref{30.9.17''2}, we have $K_{m(j)}\subseteq K_{\ell+1}=\Gamma_1\cup\Gamma_2$, and, by \eqref{1.10.17a2} and \eqref{2.10.17'''}, we have $\Gamma_2\subseteq V$. This implies that $K_{m(j)}\subseteq \Gamma_1\setminus V\subseteq(W_1\cap W')\setminus V$. Therefore \eqref{2.10.17'neu} follows from \eqref{2.2.18''}. Statement 2 is proved.

Applying finally often Statement 2, we obtain

{\bf Statement 3.} Let $j\in\N$, let $W$  be an open neighborhood of $K_{m(j+1)}$ and $g\in\widehat{\mathcal O}^{\mathrm{GCom\,}A}(W,Z)$. Then there exist an open neighborhood $W'$ of $K_{m(j+2)}$ and  $h\in \widehat{\mathcal O}^{\mathrm{GCom\,}A}(W,Z)$ such that
$g=h\quad\text{on}\quad K_{m(j)}$.

To prove the lemma, we have to find a family $f=\{f_\alpha\}_{\alpha\in I}$ of sections $f_\alpha\in \widehat{\mathcal O}^{\mathrm{GCom\,}A}(U_\alpha,Z)$ with $f_{\alpha\beta}^{}=f_\alpha^{} f_\beta^{-1}$ on $U_\alpha\cap U_\beta$.
Since, by (c) and \eqref{1.10.17a''}, $\bigcup_{j=1}^\infty \mathrm{int\,} K_{m(j)}=X$, for that it is sufficient to construct a sequence of families $\big\{f^{(j)}_\alpha\big\}_{\alpha\in I}$, $j\in\N$, of sections $f_\alpha^{(j)}\in \widehat{\mathcal O}^{\mathrm{GCom\,}A}(\mathrm{int\,} K_{m(j)}\cap U_\alpha,Z)$ such that, for each $j\in\N$,
\begin{itemize}
\item[S($j$):] $f_{\alpha\beta}^{}=f_\alpha^{(j)} (f_\beta^{(j)})^{-1}$ on $\mathrm{int\,} K_{m(j)}\cap U_\alpha\cap U_\beta$,  for all  $\alpha,\beta\in I$.
\item[T($j$):]  If $j\ge 2$, then $f_\alpha^{(j)}=f_\alpha^{(j-1)}$ on $K_{m(j-2)}\cap U_\alpha$,  for all $\alpha\in I$.
\end{itemize}

We do this by induction.
 By Statement 1 there is an open neighborhood  $W$ of $K_{m(0)}$ such that $f\vert_W$ splits, i.e., there is a family $\{f^{(W)}_\alpha\}_{\alpha\in I}$ of sections $f^{(W)}_\alpha\in \widehat{\mathcal O}^{\mathrm{GCom\,}A}(W\cap U_\alpha,Z)$ such that $f^{}_{\alpha\beta}=f^{(W)}_\alpha (f^{(W)}_\beta)^{-1}$ on $W\cap U_\alpha\cap U_\beta$ for all $\alpha,\beta\in I$. Setting $f^{(0)}_\alpha=f^{(W)}_\alpha\big\vert_{\mathrm{int\,} K_{m(0)}\cap U_\alpha}$,  we get a family $\big\{f^{(0)}_\alpha\big\}_{\alpha\in I}$ of sections $f_\alpha^{(0)}\in \widehat{\mathcal O}^{\mathrm{GCom\,}A}(\mathrm{int\,} K_{m(0)}\cap U_\alpha,Z)$ satisfying  S($0$). T($0$) is trivial.

Assume now, for some $\ell\in\N$, we already have families $\big\{f^{(j)}_\alpha\big\}_{\alpha\in I}$, $0\le j\le \ell$, of sections $f_\alpha^{(j)}\in \widehat{\mathcal O}^{\mathrm{GCom\,}A}(\mathrm{int\,} K_{m(j)}\cap U_\alpha,Z)$
satisfying S($j$) and T($j$) for $1\le j\le  \ell$.
We have to find a family $f^{(\ell+1)}_\alpha\in \widehat{\mathcal O}^{\mathrm{GCom\,}A}(\mathrm{int\,} K_{m(\ell+1)}\cap U_\alpha,Z)$, $\alpha\in I$, satisfying S($\ell+1$) and T($\ell+1$).

By Statement 1, we  have  $F_\alpha^{}\in \widehat{\mathcal O}^{\mathrm{GCom\,}A}(\mathrm{int\,} K_{m(\ell+1)}\cap U_\alpha,Z)$, $\alpha\in I$, such that
\begin{equation}\label{3.10.17}
f_{\alpha\beta}^{}=F_\alpha^{}F_\beta^{-1}\quad\text{on}\quad \mathrm{int\,} K_{m(\ell+1)}\cap U_\alpha\cap U_\beta.
\end{equation}
If $\ell=0$, then $f^{1}_\alpha:=F_\alpha^{}$ is as required, as S($1$) holds by \eqref{3.10.17} and T($1$) is trivial.

Let $\ell\ge 1$.
Then, by \eqref{3.10.17} and  S($\ell$), on $ \mathrm{int\,} K_{m(\ell)}\cap U_\alpha\cap U_\beta$,
\[
F_\alpha^{}F_\beta^{-1}=f^{}_{\alpha\beta}=f_\alpha^{(\ell)} (f_\beta^{(\ell)})^{-1}
\quad\text{and, therefore,}\quad
(f_\alpha^{(\ell)})^{-1}F_\alpha^{}= (f_\beta^{(\ell)})^{-1}F_\beta^{}.
\]
Hence, there exists  $g\in \widehat{\mathcal O}^{\mathrm{GCom\,}A}(\mathrm{int\,} K_{m(\ell)},Z)$ such that
\begin{equation}\label{3.10.17'}
g=(f_\alpha^{(\ell)})^{-1}F_\alpha^{}\quad\text{on}\quad \mathrm{int\,} K_{m(\ell)}\cap U_\alpha.
\end{equation} By Statement 3 (with $j=\ell-1$), we can find  $h\in \widehat{\mathcal O}^{\mathrm{GCom\,}A}(\mathrm{int\,} K_{m(\ell+1)},Z)$ with
\begin{equation}\label{3.10.17''}
h=g\quad\text{on}\quad \mathrm{int\,} K_{m(\ell-1)}.
\end{equation}

Set $f_\alpha^{(\ell+1)}=F^{}_\alpha h_{}^{-1}$ on $\mathrm{int\,} K_{m(\ell+1)}^{}\cap U_\alpha^{}$.

Then, by \eqref{3.10.17}, we have, on $\mathrm{int\,} K_{m(\ell+1)}\cap U_\alpha\cap U_\beta$, \[f_\alpha^{(\ell+1)}(f_\beta^{(\ell+1)})^{-1}=F_\alpha^{}h_{}^{-1}hF_\beta^{-1}=F_\alpha^{}F_\beta^{-1}=f^{}_{\alpha\beta},\]  i.e., S($\ell+1$) holds. Moreover, by \eqref{3.10.17''} and \eqref{3.10.17'}, we have, on $\mathrm{int\,} K_{m(\ell-1)}\cap U_\alpha$, \[f_\alpha^{(\ell+1)}=F^{}_\alpha h_{}^{-1}=F^{}_\alpha g_{}^{-1}=F^{}_\alpha(F_\alpha^{})^{-1}f_\alpha^{(\ell)}=f_\alpha^{(\ell)},\]  i.e., also T($\ell+1$) is satisfied.
\end{proof}

{\bf Proof of Theorem \ref{27.9.17''}.} Let $Z$ be the union of the set of nonsmooth points of $X$ and the set of points in $X$ which are not Jordan stable for $A$ (Def. \ref{8.6.16n'}). By Proposition \ref{11.6.16}, $Z$ is discrete and closed in $X$. Therefore (see Remark \ref{27.9.17+}) it is sufficient to prove  that $H^1\big(X,\widehat{\mathcal O}^{\mathrm{GCom\,}A}(\cdot,Z)\big)=0$.

 Let an open covering $\mathcal U=\{U_i\}_{i\in I}$ of $X$ and a $\big(\Cal U,\widehat{\mathcal O}^{\mathrm{GCom\,}A}(\cdot,Z)\big)$-cocycle  $f=\{f_{ij}\}_{j\in I}$ be given. We have to find a family $f_i\in\widehat{\mathcal O}^{\mathrm{GCom\,}A}(U_i,Z)$, $i\in I$, such that, for all $i,j\in I$  and $\zeta\in U_i\cap U_j$,
 \begin{equation}\label{29.9.17*}
 f^{}_{ij}(\zeta)=f_i^{}(\zeta)(f^{}_j(\zeta))^{-1}.
 \end{equation}

 Let $\mathfrak X$ be the family of irreducible components of $X$ (see, e.g., \cite[Ch.9, \S  2.2]{GR} or \cite[Ch.V, \S 4.5]{L}). Then, for each $Y\in\mathfrak X$, $Z\cap Y$ is discrete and closed in $Y$, contains all nonsmooth points of $Y$ and all points of $Y$ which are not Jordan stable for $A\vert_Y$. Therefore, by
 Lemma \ref{28.9.17'}, for each $Y\in \mathfrak X$, we can find a family $f^{Y}_i\in \widehat{\mathcal O}^{\mathrm{GCom\,}A\vert_Y}(U_i\cap Y,Z\cap Y)$ such that
\begin{equation}\label{28.9.17''}
f^{}_{ij}=f^Y_i(f^Y_j)^{-1}\quad \text{on}\quad Y\cap U_i\cap U_j.
\end{equation}
Since $X\setminus Z$ is smooth  and, therefore, the family $\{Y\setminus Z\}_{Y\in \mathfrak X}$ is pairwise disjoint,  for each fixed $i\in I$, the family  $\{Y\cap (U_i\setminus Z)\}_{Y\in\mathfrak X}$ is a pairwise disjoint open covering of $U_i\setminus Z$. Hence, for each $i\in I$, there is a well defined map $f_i:U_i\to \mathrm{GL}(n,\C)$ such that $f_i\big\vert_{U_i\setminus Z}\in \widehat{\mathcal O}^{\mathrm{GCom\,}A\vert_Y}(U_i\setminus Z)$ and
\[
f_i^{}(\zeta)=\begin{cases}f_i^Y(\zeta)\quad&\text{if}\quad\zeta\in Y\cap (U_i\setminus Z),\quad \text{for all }Y\in \mathfrak X,\\
I&\text{if}\quad \zeta\in Z.\end{cases}
\] Since  each $f_i^Y$ is equal to $I$ in a $Y$-neighborhood of $Z\cap Y$ and since $\mathfrak X$ is locally finite, we see that
$f_i\in \widehat{\mathcal O}^{\mathrm{GCom\,}A\vert_Y}(U_i,Z)$.  
\eqref{29.9.17*} follows from \eqref{28.9.17''} if $\zeta\in (U_i\cap U_j)\setminus Z$, and from the fact that $f_i^{}(\zeta)=f_j(\zeta)=f_{ij}(\zeta)=I$ if $\zeta\in (U_i\cap U_j)\cap Z$.
This completes the proof of Theorem \ref{27.9.17''} and, hence, of Theorem \ref{18.10.16}.


\begin{thebibliography}{FR10}

\bibitem {B1} H. Baumg\"artel, {\em Jordansche Normalform holomorpher Matrizen}, Mber. Dt. Akad. Wiss, Berlin {\bf 11}, 23-24 (1969).

\bibitem {B2} H. Baumg\"artel, {\em Endlichdimensionale analytische St\"orungstheorie}, Akademie-Verlag Berlin, 1972.

\bibitem {B3} H. Baumg\"artel, {\em Analytic perturbation theory for linear operators depending on several complex variables}, Mat. Issled. {\bf 9}, 1, 17-39 (1974) (Russian).

\bibitem {B4} H. Baumg\"artel, {\em Analytic perturbation theory for matrices and operators},
 Birkhäuser, 1985.


\bibitem {C} H. Cartan, {Espaces fibr\'es analytiques}, in: Symposium International de Topologia Algebraica.
Universidad National Aut\'onomica de Mexico and UNESCO. 97-121 (1958).

\bibitem{F} O. Forster, {\em Lectures on Riemann surfaces}, Springer, 1981.

\bibitem{FR1} O. Forster and K. J. Ramspott, {\em Okasche Paare von Garben nicht-abelscher Gruppen},  Invent. math. {\bf 1}, 260-286 (1966).

\bibitem{Fc1} F. Forstneri\v{c}, {\em The Oka principle for multivalued sections of ramified mappings}, Forum Math.
15(2), 309–328 (2003).

\bibitem {Fc} F. Forstneri\v{c}, {\em Stein manifolds and holomorphic mappings}, second edition, Springer, 2017.

\bibitem {Ga} F. R. Gantmacher, {\em The theory of matrices. Volume one}, Chelesa, 1959.

\bibitem  {Gr} H. Grauert, {\em Analytische Faserungen \"uber holomorph-vollst\"andigen R\"aumen}, Math. Ann. {\bf 135}, 263-273 (1958).

\bibitem{GR} H. Grauert and R. Remmert, {\em Coherent analytic sheaves}, Springer, 1984.

\bibitem{Gu} R. Guralnick, {\em Similarity of holomorphic matrices}, Linear Algebra Appl. 99, 85-96 (1988).

\bibitem{Hi} F. Hirzebruch, {\em Neue topologische Methoden in der algebraishen Geometrie}, Springer, 1956.

\bibitem{Ho} L. H\"ormander, {\em An introduction to complex analysis in several variables}, 3rd edn., North-Holland, 1990.

\bibitem{Le0} J. Leiterer, {\em Local and global similarity of holomorphic matrices}, Preprint 2016, arXiv:1703.09524.

\bibitem {Le1} J. Leiterer, {\em On the Jordan structure of holomorphic matrices}, Preprint 2017, arXiv:1703.09535.

\bibitem {Le2} J. Leiterer, {\em On the similarity of holomorphic matrices}, Preprint 2017, arXiv:1703.09530.


\bibitem{L} S. \L ojasiewicz, {\em Introduction to complex analytic geometry}, Birkh\"auser, 1991.

\bibitem{M} J. Milnor, {Morse Theory}, Princeton University Press,  1973.

\bibitem {St} N. Steenrod, {\em The topology of fibre bundles}, Princeton, 1953.

\bibitem {T} G. P. A. Thijsse, {\em Global holomorphic similarity to a Jordan form}, Results in Mathematics 8,
78-87 (1985).

\end{thebibliography}
\end{document}